\newtheorem{Thm}{Theorem}[section]
\newtheorem{Prop}[Thm]{Proposition}
\newtheorem{Cor}[Thm]{Corollary}
\newtheorem{Lem}[Thm]{Lemma}
\newtheorem{Def}[Thm]{Definition}
\theoremstyle{definition}
\newtheorem{Rem}[Thm]{Remark}
\newenvironment{red}
{\relax\color{red}}
{\hspace*{.3ex}\relax}
\newcommand{\ber}{\begin{red}}
\newcommand{\er}{\end{red}}
\numberwithin{equation}{section}
\newcommand{\nc}{\newcommand}
\renewcommand{\to}[1][]{\xrightarrow{#1}{}}
\newcommand{\Z}{\mathbf{Z}}
\newcommand{\Q}{\mathbf{Q}}
\newcommand{\A}{\mathbf{A}}
\newcommand{\g}{\mathfrak{g}}
\newcommand{\gsl}{\mathfrak{sl}}
\newcommand{\wt}{{\rm wt}}
\nc{\on}{\operatorname}
\newcommand{\Hom}{\on{Hom}}
\newcommand{\End} { {\rm End}}
\newcommand{\id} { {\rm id}}
\nc{\cor}{\mathbf{k}}
\nc{\cori}{\mathbf{k}^I}
\nc{\KLR}{Khovanov-Lauda-Rouquier algebra}
\nc{\KLRs}{Khovanov-Lauda-Rouquier algebras}
\nc{\seteq}{\mathbin{:=}}
\newcommand{\soplus}{\mathop{\mbox{\normalsize$\bigoplus$}}\limits}
\nc{\cl}{\colon}
\nc{\set}[2]{\left\{#1\mid #2\right\}}
\nc{\Id}{\operatorname{Id}}
\nc{\Ker}{\on{Ker}}
\nc{\Coker}{\on{Coker}}
\nc{\coh}{\mathrm{coh}}
\nc{\Mod}{\on{Mod}}
\nc{\Modc}{\on{Mod_\coh}}
\nc{\Proj}{\on{Proj}}
\nc{\Rep}{\on{Rep}}
\newcommand{\isoto}[1][]{\mathop{\xrightarrow[#1]%
{{\raisebox{-.6ex}[0ex][-.6ex]{$\mspace{2mu}\sim\mspace{2mu}$}}}}}
\newcommand{\isofrom}[1][]{\xleftarrow[#1]{\sim}}
\nc{\To}[1][\quad]{\to[\;#1\;]}
\nc{\hs}{\hspace*}
\nc{\vs}{\vspace*}
\nc{\bF}{\overline{F}}
\nc{\epi}{\twoheadrightarrow}
\nc{\mono}{\rightarrowtail}
\nc{\be}{\begin{enumerate}}
\nc{\ee}{\end{enumerate}}
\nc{\ba}{\begin{array}}
\nc{\ea}{\end{array}}
\nc{\eq}{\begin{eqnarray}}
\nc{\eneq}{\end{eqnarray}}
\nc{\eqn}{\begin{eqnarray*}}
\nc{\eneqn}{\end{eqnarray*}}
\nc{\ran}{\rangle}
\nc{\lan}{\langle}
\nc{\bl}{\bigl(}
\nc{\br}{\bigl)}
\nc{\bnum}{\be[{\rm(i)}]}
\nc{\enum}{\ee}
\nc{\bna}{\be[{\rm(a)}]}
\nc{\Proof}{\begin{proof}}
\nc{\QED}{\end{proof}}
\newcommand{\scbul}{{\,\raise1pt\hbox{$\scriptscriptstyle\bullet$}\,}}
\nc{\tens}{\mathop\otimes}
\nc{\E}[1][i]{\mathsf{E}_{#1}}
\nc{\F}[1][i]{\mathsf{F}_{#1}}
\nc{\x}[1][{1}]{a^\Lambda(x_{#1})}
\nc{\Supp}{\mathrm{Supp}}
\nc{\vphi}{\varphi}
\nc{\la}{\lambda}
\newlength{\my}
\begin{document}

\title[Categorification via Khovanov-Lauda-Rouquier Algebras]
{Categorification of Highest Weight Modules  \\ via
Khovanov-Lauda-Rouquier Algebras}
\author[Seok-Jin Kang]{Seok-Jin Kang $^{1}$}
\thanks{$^1$ This work was supported by KRF Grant \# 2007-341-C00001 and NRF Grant \# 2010-0010753.}
\address{Department of Mathematical Sciences and Research Institute of Mathematics,
Seoul National University, 599 Gwanak-ro, Gwanak-gu, Seoul 151-747,
Korea} \email{sjkang@snu.ac.kr}

\author[Masaki Kashiwara]{Masaki Kashiwara $^{2}$}
\thanks{$^2$ This work was partially supported by Grant-in-Aid for
Scientific Research (B) 22340005, Japan Society for the Promotion of Science.}
\address{Research Institute for Mathematical Sciences, Kyoto University, Kyoto 606-8502, Japan,
and Department of Mathematical Sciences, Seoul National University,
599 Gwanak-ro, Seoul 151-747, Korea}
\email{masaki@kurims.kyoto-u.ac.jp}


\subjclass[2000]{05E10, 16G99, 81R10} \keywords{categorification,
Khovanov-Lauda-Rouquier algebras, highest weight modules}

\begin{abstract}
In this paper, we prove Khovanov-Lauda's cyclotomic
categorification conjecture for all symmetrizable Kac-Moody
algebras. Let $U_q(\g)$ be the quantum group
associated with a symmetrizable Cartan datum and let $V(\Lambda)$ be
the irreducible highest weight $U_q(\g)$-module with a dominant
integral highest weight $\Lambda$. We prove that the cyclotomic
Khovanov-Lauda-Rouquier algebra $R^{\Lambda}$ gives a
categorification of $V(\Lambda)$.
\end{abstract}

\maketitle


\section{Introduction}

The {\it Khovanov-Lauda-Rouquier algebras}, a vast generalization of
affine Hecke algebras of type $A$, were introduced independently by
Khovanov and Lauda (\cite{KL09, KL11})
and Rouquier (\cite{R08}) to provide a {\em categorification} of
quantum groups. Let $U_q(\g)$ be the quantum
group associated with a symmetrizable Cartan datum and let
$R=\bigoplus_{\beta \in Q^{+}} R(\beta)$ be the corresponding
Khovanov-Lauda-Rouquier algebra. Then it was shown in \cite{KL09,
KL11, R08} that there exists an algebra isomorphism
$$U_{\A}^{-}(\g) \simeq [\Proj(R)]= \bigoplus_{\beta \in
Q^{+}}[\Proj(R(\beta))],$$ where $U_{\A}^-(\g)$ is the integral form
of the negative half $U_q^-(\g)$ of $U_{q}(\g)$ with $\A = \Z[q,
q^{-1}]$, and $[\Proj(R)]$ is the Grothendieck group of the additive
category of finitely generated graded projective $R$-modules.
Moreover, when the generalized Cartan matrix is a symmetric matrix,
Varagnolo and Vasserot (\cite{VV09}) and independently Rouquier (
\cite{R11}) proved that Kashiwara's {\em lower global basis} or
Lusztig's {\it canonical basis} corresponds to the isomorphism
classes of indecomposable projective $R$-modules under this
isomorphism.

For each dominant integral weight $\Lambda \in P^{+}$, the algebra
$R$ has a special quotient $R^{\Lambda}=\bigoplus_{\beta \in Q^{+}}
R^{\Lambda}(\beta)$ which is called the {\it cyclotomic
Khovanov-Lauda-Rouquier algebra}. In \cite{KL09}, Khovanov and Lauda
conjectured that $[\Proj(R^{\Lambda})]$ has a $U_\A(\g)$-module
structure  and that there exists a $U_\A(\g)$-module isomorphism
$$V_\A(\Lambda) \simeq [\Proj(R^{\Lambda})]
= \bigoplus_{\beta \in Q^{+}} [\Proj(R^{\Lambda}(\beta))],$$
where $V_\A(\Lambda)$ denotes the $U_\A(\g)$-module generated by the
highest weight vector $v_{\Lambda}$. It is called the {\it
cyclotomic categorification conjecture}. In \cite{BS08}, Brundan and
Stroppel proved a special case of this conjecture in finite type
$A$.
 In \cite{BK09}, Brundan and Kleshchev proved
this conjecture for type $A_\infty$ and $A_n^{(1)}$ using the
isomorphism between $R^{\Lambda}$ and the cyclotomic Hecke algebra
$H^{\Lambda}$ which was constructed in \cite{BK08}.
The $\mathfrak{sl}_{2}$-categorification theory
developed in \cite{CR08, R08} also
played an important role in their proof. In \cite{LV09}, the crystal
version of this conjecture was proved for all symmetrizable
Kac-Moody algebras. That is, in \cite{LV09}, Lauda and Vazirani
investigated the crystal structure on the set of isomorphism classes
of irreducible graded modules over $R$ and $R^\Lambda$, and showed
that these crystals are isomorphic to the crystals $B(\infty)$ and
$B(\Lambda)$, respectively.

In this paper, we prove Khovanov-Lauda's cyclotomic categorification
conjecture for {\it all} symmetrizable Kac-Moody algebras. For
$\beta \in Q^{+}$, let $\Mod(R^{\Lambda}(\beta))$ be the abelian
category of $\Z$-graded $R^{\Lambda}(\beta)$-modules. For each
$i \in I$, let us consider the restriction functor and the induction
functor:
\begin{equation*}
\begin{aligned}
& E_{i}^{\Lambda}\cl \Mod(R^{\Lambda}(\beta+\alpha_i)) \longrightarrow \Mod(R^{\Lambda}(\beta)), \\
& F_{i}^{\Lambda}\cl \Mod(R^{\Lambda}(\beta)) \longrightarrow
\Mod(R^{\Lambda}(\beta+ \alpha_i))
\end{aligned}
\end{equation*}
defined by
\begin{equation*}
\begin{aligned}
& E_{i}^{\Lambda}(N) = e(\beta, i) N = e(\beta, i)R^{\Lambda}(\beta+
\alpha_i) \otimes_{R^{\Lambda}(\beta+\alpha_i)} N, \\
& F_{i}^{\Lambda}(M) = R^{\Lambda}(\beta+\alpha_i) e(\beta, i)
\otimes_{R^{\Lambda}(\beta)} M,
\end{aligned}
\end{equation*}
where $M \in \Mod(R^{\Lambda}(\beta))$, $N \in
\Mod(R^{\Lambda}(\beta+\alpha_i))$.

Our first main result is that
$R^{\Lambda}(\beta+\alpha_i)e(\beta, i)$ is a projective right
$R^\Lambda(\beta)$-module and $e(\beta,
i)R^{\Lambda}(\beta+\alpha_i)$ is a projective left
$R^\Lambda(\beta)$-module (Theorem \ref{th:proj}).
Hence the functors $E_{i}^{\Lambda}$ and $F_{i}^{\Lambda}$ are exact and
send projectives to projectives.

\medskip
Another main result of this paper can be summarized as follows
(Theorem \ref{thm:M}): let $\lambda = \Lambda - \beta$.
\be[{\rm(1)}]
\item If $\langle h_{i}, \lambda \rangle \ge 0$, there exists a
natural isomorphism of endofunctors on $\Mod(R^{\Lambda}(\beta))$:
$$q_{i}^{-2} F_{i}^{\Lambda} E_{i}^{\Lambda} \oplus
\bigoplus_{k=0}^{\langle h_{i}, \lambda \rangle -1} q_{i}^{2k} \Id
\overset{\sim} \longrightarrow E_{i}^{\Lambda} F_{i}^{\Lambda}.$$

\item If $\langle h_{i}, \lambda \rangle \le 0$, there exists a
natural isomorphism of endofunctors on $\Mod(R^{\Lambda}(\beta))$:
$$q_{i}^{-2} F_{i}^{\Lambda} E_{i}^{\Lambda} \overset{\sim}
\longrightarrow E_{i}^{\Lambda} F_{i}^{\Lambda} \oplus
\bigoplus_{k=0}^{-\langle h_{i}, \lambda \rangle -1} q_{i}^{-2k-2}
\Id.$$ \ee Here, $q_i\seteq q^{(\alpha_i,\alpha_i)/2}$ denotes the
grade-shift functor defined in \eqref{eq:shift}.  This is one of the
axioms of the categorification of $U_q(\g)$ due to Chuang-Rouquier
\cite{CR08} and Rouquier \cite{R08}.

We write $[\Rep(R^{\Lambda})]$
for the Grothendieck group of the abelian category
$\Rep(R^{\Lambda})$ of $R^\Lambda$-modules that are
finite-dimensional over the base field. It follows that the functors
$E_{i}^{\Lambda}$, $F_{i}^{\Lambda}$ $(i \in I)$ satisfy the mixed
relations (Lemma \ref{lem:mixed}), and hence by \cite[Proposition
B.1]{KMPY96}, the Grothendieck groups $[\Proj(R^{\Lambda})]$ and
$[\Rep(R^{\Lambda})]$ become integrable $U_q(\g)$-modules.
Therefore, we obtain the categorification of the irreducible highest
weight module $V(\Lambda)$ (Theorem \ref{thm:N}):
$$[\Proj(R^{\Lambda})] \simeq V_{\A}(\Lambda) \ \ \text{and} \ \
[\Rep(R^{\Lambda})] \simeq V_{\A}(\Lambda)^{\vee},$$ where
$V_{\A}(\Lambda)^{\vee}$ is the dual of $V_{\A}(\Lambda)$ with
respect to a non-degenerate symmetric bilinear form on $V(\Lambda)$.
In other words, we obtain an integrable $2$-representation of the
$2$-Kac-Moody algebra in the sense of Rouquier \cite[Definition
5.1]{R08}.

\bigskip
One of the key ingredients of the proof of these results is a
categorification of the equality
\eq
&&[e_i,P]=\dfrac{K_i^{-1}e_i'(P)-K_ie_i''(P)} {q_i^{-1}-q_i}
\quad\text{for $P\in U_q^-(\g)$} \label{eq:ei} \eneq
used in \cite{Kash91} in the course of constructing the theory of crystal
bases. Here $e'_i$ and $e_i''$ are endomorphisms of $U_q^{-}(\g)$.
Hence, for the highest weight vector $v_{\Lambda}$ of $V(\Lambda)$,
we have
$$e_i(Pv_\Lambda)=(q^{-1}-q)^{-1}
\Bigl(q^{(\alpha_i |\Lambda+\wt(P))}e_i'(P)v_\Lambda -q^{-(\alpha_i
|\Lambda+\wt(P))}e_i''(P)v_\Lambda\Bigr).$$ By the categorification,
the operator $e_i$ corresponds (after taking the adjoints) to the
functor $F_i^\Lambda$, while the operators $(q^{-1}-q)^{-1}e'_i$ and
$(q^{-1}-q)^{-1}e_i''$ correspond to the functors \eqn
F_i(M)&=&M\circ R(\alpha_i)=R(n+1)e(n,i)\otimes_{R(n)}M\quad\text{and}\\
\bF_i(M)&=&R(\alpha_i)\circ M=R(n+1)e(i,n)\otimes_{R(n)}M,\text{\ respectively.}
\eneqn
Here the convolution functor $\scbul\circ\scbul\cl
\Mod(R(m))\times\Mod(R(n))\to \Mod(R(m+n))$ is defined by
$M\circ N=R(m+n)\otimes_{R(m)\otimes R(n)}(M\otimes N)$.
Then the categorification of the identity \eqref{eq:ei}
can be interpreted as an exact sequence (see Theorem~\ref{th:main})
\eq
0\to \bF_iM\to F_i M\to F_i^\Lambda M\to 0
\quad\text{for any $M\in\Mod(R^\Lambda(n))$.}\label{exact:main}
\eneq
Our main results are consequences of this exact sequence.

\medskip
In \cite{Web10}, Webster gave a proof of
Khovanov-Lauda's cyclotomic categorification
conjecture by a completely different
method, which is beyond the authors' comprehension.

\bigskip
This paper is organized as follows. In Section 2 and Section 3, we
recall basic properties of quantum groups, integrable highest weight
modules and the Khovanov-Lauda-Rouquier algebra $R$. In Section 4,
we investigate the structure of cyclotomic Khovanov-Lauda-Rouquier
algebra $R^{\Lambda}$ and prove the exact sequence
\eqref{exact:main}, and then show that the functors
$E_{i}^{\Lambda}$, $F_{i}^{\Lambda}$ $(i\in I)$ are exact and send
projectives to projectives. Section 5 is devoted to the
$\mathfrak{sl}_{2}$-categorification theory. In Section 6, we finish
the proof of Khovanov-Lauda's cyclotomic categorification
conjecture.

\vskip 5mm

{\it Acknowledgements.} The first author would like to express his
sincere gratitude to Research Institute for Mathematical Sciences,
Kyoto University for their hospitality during his visit in January,
2011.
We would also like to thank Se-jin Oh for many helpful discussions.
Special thanks should be given to the referees for their valuable suggestions
which have greatly improved the exposition of the original manuscript.
\vskip 3em


\section{Quantum groups and highest weight modules} \label{sec:qgroup}

Let $I$ be a finite index set. An integral square matrix
$A=(a_{ij})_{i,j \in I}$ is called a {\em symmetrizable generalized
Cartan matrix} if it satisfies (i) $a_{ii} = 2$ $(i \in I)$, (ii)
$a_{ij} \le 0$ $(i \neq j)$, (iii) $a_{ij}=0$ if $a_{ji}=0$ $(i,j \in I)$,
(iv) there is a diagonal matrix
$D=\text{diag} (d_i \in \Z_{> 0} \mid i \in I)$ such that $DA$ is
symmetric.

A \emph{Cartan datum} $(A,P, \Pi,P^{\vee},\Pi^{\vee})$ consists of
\begin{enumerate}
\item[(1)] a symmetrizable generalized Cartan matrix $A$,
\item[(2)] a free abelian group $P$ of finite rank, called the \emph{weight lattice},
\item[(3)] $\Pi= \{ \alpha_i \in P \mid \ i \in I \}$, called
the set of \emph{simple roots},
\item[(4)] $P^{\vee}\seteq\Hom(P, \Z)$, called the \emph{dual weight lattice},
\item[(5)] $\Pi^{\vee}= \{ h_i \ | \ i \in I  \}\subset P^{\vee}$, called
the set of \emph{simple coroots},
\end{enumerate}
satisfying the following properties:
\begin{enumerate}
\item[(i)] $\langle h_i,\alpha_j \rangle = a_{ij}$ for all $i,j \in I$,
\item[(ii)] $\Pi$ is linearly independent,
\item[(iii)] for each $i \in I$, there exists $\Lambda_i \in P$ such that
           $\langle h_j ,\Lambda_i \rangle =\delta_{ij}$ for all $j \in I$.
\end{enumerate}

The $\Lambda_i$ are called the {\em fundamental weights}. We denote by
$$P^{+} \seteq \set{ \lambda \in P}%
{\text{$\langle h_i, \lambda \rangle \in\Z_{\ge 0}$ for all $i \in I$}}$$
the set of \emph{dominant integral weights}.
The free abelian group $Q\seteq\soplus_{i \in I} \Z
\alpha_i$ is called the \emph{root lattice}. Set $Q^{+}= \sum_{i \in
I} \Z_{\ge 0} \alpha_i$. For $\alpha = \sum k_i \alpha_i \in Q^{+}$,
we define the {\it height} of $|\alpha|$ to be $|\alpha|=\sum k_i$.
Let $\mathfrak{h} = \Q \otimes_\Z P^{\vee}$. Since $A$ is
symmetrizable, there is a symmetric bilinear form $(\quad|\quad)$ on
$\mathfrak{h}^*$ satisfying
$$ (\alpha_i | \alpha_j) =d_i a_{ij} \quad (i,j \in I)
\quad\text{and $\lan h_i,\lambda\ran=
\dfrac{2(\alpha_i|\lambda)}{(\alpha_i|\alpha_i)}$ for any $\lambda\in\mathfrak{h}^*$ and $i \in I$}.$$

Let $q$ be an indeterminate and set $q_i = q^{\frac{(\alpha_i|
\alpha_i)}{2}}$. Note that $(\alpha_i | \alpha_i)={2}d_i\in 2\Z_{>0} $.
For $m,n \in \Z_{\ge 0}$, we define
\begin{equation*}
 \begin{aligned}
 &[n]_i =\frac{ q^n_{i} - q^{-n}_{i} }{ q_{i} - q^{-1}_{i} },\quad
 &[n]_i! = \prod^{n}_{k=1} [k]_i\; ,\quad
 &\left[\begin{matrix}m \\ n\\ \end{matrix} \right]_i=  \frac{ [m]_i! }{[m-n]_i! [n]_i! }\;.
 \end{aligned}
\end{equation*}

\begin{Def} \label{def:qgroup}
The {\em quantum group} $U_q(\g)$ associated with a Cartan datum
$(A,P,\Pi,P^{\vee}, \Pi^{\vee})$ is the associative algebra over
$\Q(q)$ with $1$ generated by $e_i,f_i$ $(i \in I)$ and $q^{h}$ $(h
\in P^{\vee})$ satisfying the  following relations:
\begin{equation}
\begin{aligned}
& q^0=1,\  q^{h} q^{h'}=q^{h+h'} \ \ \text{for} \ h,h' \in P^{\vee},\\
& q^{h}e_i q^{-h}= q^{\langle h, \alpha_i \rangle} e_i, \ \
          \ q^{h}f_i q^{-h} = q^{-\langle h, \alpha_i \rangle} f_i \ \ \text{for} \ h \in P^{\vee}, i \in
          I, \\
& e_if_j - f_je_i =  \delta_{ij} \dfrac{K_i -K^{-1}_i}{q_i- q^{-1}_i
}, \ \ \mbox{ where } K_i=q_i^{ h_i}, \\
& \sum^{1-a_{ij}}_{r=0} \left[\begin{matrix}1-a_{ij}
\\ r\\ \end{matrix} \right]_i e^{1-a_{ij}-r}_i
         e_j e^{r}_i =0 \quad \text{ if } i \ne j, \\
& \sum^{1-a_{ij}}_{r=0} \left[\begin{matrix}1-a_{ij}
\\ r\\ \end{matrix} \right]_i f^{1-a_{ij}-r}_if_j
        f^{r}_i=0 \quad \text{ if } i \ne j.
\end{aligned}
\end{equation}
\end{Def}

Let $U_q^{+}(\g)$ (resp.\ $U_q^{-}(\g)$) be the subalgebra of
$U_q(\g)$ generated by $e_i$'s (resp.\ $f_i$'s), and let $U^0_q(\g)$
be the subalgebra of $U_q(\g)$ generated by $q^{h}$ $(h \in
P^{\vee})$. Then we have the \emph{triangular decomposition}
$$ U_q(\g) \cong U^{-}_q(\g) \otimes U^{0}_q(\g) \otimes U^{+}_q(\g),$$
and the {\em weight space decomposition}
$$U_q(\g) = \bigoplus_{\alpha \in Q} U_q(\g)_{\alpha},$$
where $U_q(\g)_{\alpha}\seteq\set{ x \in U_q(\g)}{q^{h}x q^{-h}
=q^{\langle h, \alpha \rangle}x \text{ for any } h \in P^{\vee}}$.

Let $\A= \Z[q, q^{-1}]$ and set
$$e_i^{(n)} = e_i^n / [n]_i!, \quad f_i^{(n)} =
f_i^n / [n]_i! \ \ (n \in \Z_{\ge 0}).$$
We define the $\A$-form
$U_{\A}(\g)$ to be the $\A$-subalgebra of $U_q(\g)$ generated by
$e_i^{(n)}$, $f_i^{(n)}$ $(i \in I, n \in \Z_{\ge 0})$, $q^h$ ($h\in P^\vee$).
Let $U_{\A}^{+}(\g)$ (resp.\ $U_{\A}^{-}(\g)$) be the
$\A$-subalgebra of $U_q(\g)$ generated by $e_i^{(n)}$ (resp.\
$f_i^{(n)}$) for $i\in I$, $n \in \Z_{\ge 0}$.

\begin{Def} \hfill
\bna
\item A $U_q(\g)$-module $M$ is called a {\em weight module} if
it has a weight space decomposition
$$M=\bigoplus_{\mu \in P} M_{\mu}, \ \text{where} \ M_{\mu}\seteq\set{ v \in M}
{q^h v = q^{\mu(h)} v \ \ \text{for all} \ h \in P^{\vee} }.$$

\item
A weight module $M$ is called {\em integrable} if the actions of
$e_i$ and $f_i$ on $M$ are locally nilpotent for any $i\in I$; i.e.,
for each $s\in M$ there exists a positive integer $m$ such that
$e_i^ms=f_i^ms=0$ for any $i\in I$.

\item A weight module $V$ is called a {\em highest weight
module with highest weight $\Lambda \in P$} if there exists a
non-zero vector $v_{\Lambda} \in V$ such that

\bnum
\item $e_i v_{\Lambda} = 0$ for all $i \in I$,
\item $q^h v_{\Lambda} = q^{\langle h, \Lambda \rangle}
v_{\Lambda}$ for all $h \in P^{\vee}$,
\item $V= U_q(\g)v_{\Lambda}$.
\end{enumerate}

\end{enumerate}
\end{Def}
For each $\Lambda \in P$, there exists a unique irreducible
highest weight module $V(\Lambda)$ with highest weight $\Lambda$.

\begin{Prop} [\cite{HK02, Lus93}] \label{prop:hw} Let $\Lambda \in P^{+}$.
\bna
\item If\/ $V$ is an integrable highest weight module with
highest weight $\Lambda$, then $V$ is
isomorphic to $V(\Lambda)$.
\item The highest weight vector
$v_{\Lambda}$ in $V(\Lambda)$ satisfies the following relations:
\begin{equation}\label{eq:hwvector}
f_i^{\langle h_i, \Lambda \rangle +1} v_{\Lambda} =0 \ \ \text{for
all} \ i \in I.
\end{equation}
\end{enumerate}
\end{Prop}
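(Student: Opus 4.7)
My plan is to prove (b) first by a direct computation inside the irreducible module $V(\Lambda)$, and then derive (a) by combining (b) with the universal property of Verma modules together with the classical theorem asserting that the quotient of $M(\Lambda)$ by the Serre-type relations $f_i^{\langle h_i,\Lambda\rangle+1}v_\Lambda=0$ is already irreducible.

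For (b), set $n \seteq \langle h_i, \Lambda \rangle \ge 0$ and $w \seteq f_i^{n+1} v_\Lambda$. The first step is to verify $e_j w = 0$ for every $j \in I$. When $j \neq i$ the relation $e_j f_i = f_i e_j$ (the $i\neq j$ case of the relation $e_if_j-f_je_i=\delta_{ij}(K_i-K_i^{-1})/(q_i-q_i^{-1})$) lets one slide $e_j$ past each copy of $f_i$ until it annihilates $v_\Lambda$. When $j = i$, I would first establish by induction on $k$ the $U_q(\gsl_2)$ commutator identity
\[
[e_i,\, f_i^{k}] \;=\; [k]_i\, f_i^{k-1}\; \dfrac{q_i^{-(k-1)} K_i - q_i^{k-1} K_i^{-1}}{q_i - q_i^{-1}}
\]
inside $U_q(\g)$, and then specialize at $k = n+1$ and apply it to $v_\Lambda$: since $K_i v_\Lambda = q_i^n v_\Lambda$, the numerator becomes $q_i^{-n}\cdot q_i^n - q_i^n\cdot q_i^{-n} = 0$, which together with $e_i v_\Lambda = 0$ yields $e_i w = 0$. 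Because $w$ has weight $\Lambda - (n+1)\alpha_i \neq \Lambda$ and is annihilated by every $e_j$, the submodule $U_q(\g)\,w = U_q^-(\g)\,w$ sits in weights strictly below $\Lambda$ and hence cannot contain $v_\Lambda$. Irreducibility of $V(\Lambda)$ then forces this proper submodule to be zero, i.e.\ $w = 0$.

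For (a), let $V$ be an integrable highest weight module with highest weight $\Lambda$ and highest weight vector $v$. The triangular decomposition produces a surjection $M(\Lambda) \epi V$ from the Verma module, sending the canonical generator to $v$. By integrability, $f_i^{N_i} v = 0$ for some $N_i \ge 1$, and restricting to the rank-one subalgebra $U_q(\gsl_2)_i$ generated by $e_i, f_i, K_i^{\pm 1}$, the vector $v$ is a highest weight vector of weight $n_i \seteq \langle h_i, \Lambda \rangle \ge 0$; the structure theory of finite-dimensional $U_q(\gsl_2)$-modules forces the minimal such $N_i$ to be $n_i + 1$, so $f_i^{n_i+1} v = 0$. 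Thus $V$ is a quotient of $M(\Lambda)/\langle f_i^{\langle h_i, \Lambda\rangle + 1} v_\Lambda : i \in I \rangle$, and by the classical result cited from \cite{HK02, Lus93} this quotient is already irreducible and equals $V(\Lambda)$, so the induced surjection $V(\Lambda) \epi V$ is an isomorphism.

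The main obstacle is the input quoted in the final step, namely the irreducibility of $M(\Lambda)$ modulo the Serre-type relations in the generality of an arbitrary symmetrizable Kac-Moody algebra, which is a non-trivial theorem of Lusztig. Everything else in the argument reduces either to the triangular decomposition, to the rank-one $U_q(\gsl_2)$-computation above, or to elementary weight-space bookkeeping.
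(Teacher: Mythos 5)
The paper states this proposition as a standard fact cited from \cite{HK02, Lus93} and supplies no proof of its own, so there is no in-paper argument against which to compare. Your reconstruction is correct and is essentially the textbook derivation found in those references.

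For (b), the $U_q(\mathfrak{sl}_2)$-commutator identity $[e_i,f_i^{k}]=[k]_i f_i^{k-1}\dfrac{q_i^{-(k-1)}K_i-q_i^{k-1}K_i^{-1}}{q_i-q_i^{-1}}$ is right (easy induction), and evaluating at $k=n+1$ on $v_\Lambda$ indeed gives zero because $K_i v_\Lambda=q_i^{n}v_\Lambda$; together with $e_j f_i=f_ie_j$ for $j\neq i$ and the weight argument on $U_q(\g)w=U_q^-(\g)w$, irreducibility of $V(\Lambda)$ forces $w=0$. For (a), the reduction is the standard one: universal property of the Verma module, then local nilpotence of $f_i$ plus the same rank-one computation applied to the abstract $V$ give $f_i^{n_i+1}v=0$, and you then cite Lusztig's theorem that $M(\Lambda)$ modulo these relations is already irreducible. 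Citing that result is appropriate here, since the present paper itself cites the whole proposition.

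One small mismatch between plan and execution: you announce you will ``derive (a) by combining (b) with\ldots,'' but the argument you actually give for (a) does not use (b) at all --- it repeats a parallel $\mathfrak{sl}_2$ computation on the arbitrary integrable $V$. It could not use (b) directly, since (b) is a statement about the specific module $V(\Lambda)$ and one does not yet know $V\cong V(\Lambda)$. This does not affect correctness; it is only a discrepancy in the prose.
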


Consider the anti-involution $\phi$ on $U_q(\g)$ defined by
\begin{equation*}
q^h \mapsto q^{h}, \quad e_i \mapsto f_i,\quad f_i\mapsto e_i.
\end{equation*}
By standard arguments, one can show that there exists a unique
non-degenerate symmetric bilinear form $( \ , \ )$ on $V(\Lambda)$
with $\Lambda \in P^{+}$ satisfying
\begin{equation}
\begin{aligned}
& \text{$(v_{\Lambda}, v_{\Lambda})=1$ and
$(au,v)=(u, \phi(a)v)$
for all $a\in U_q(\g)$ and $u, v \in V(\Lambda)$.}
\end{aligned}
\end{equation}

We define the {\it $\A$-form} $V_{\A}(\Lambda)$ of $V(\Lambda)$ to
be $$V_{\A}(\Lambda) = U_{\A}(\g) v_{\Lambda}.$$ The {\it dual} of
$V_{\A}(\Lambda)$ is defined to be $$ V_{\A}(\Lambda)^{\vee} =
\set{ v\in V(\Lambda)}%
{\text{$(u, v ) \in \A$ for all $u\in V_{\A}(\Lambda)$}}.$$

We have
$V_\A(\Lambda)^\vee_\lambda\simeq\Hom_{\A}(V_\A(\Lambda)_\lambda,\A)$
for any $\lambda\in P$.

\vskip 3em


\section{The Khovanov-Lauda-Rouquier algebra} \label{sec:R}

Let $(A, P, \Pi, P^{\vee}, \Pi^{\vee})$ be a Cartan datum. In this
section, we recall the construction of \KLR\ $R$
associated with $(A, P, \Pi, P^{\vee}, \Pi^{\vee})$ and
investigate its properties.
We take  as a base ring a graded commutative ring
$\cor=\soplus\nolimits_{n\in\Z}\,\cor_n$ such that $\cor_n=0$ for any $n<0$.
Let us take a matrix $(Q_{ij})_{i,j\in I}$ in $\cor[u,v]$
such that $Q_{ij}(u,v)=Q_{ji}(v,u)$ and $Q_{ij}(u,v)$ has the form
\begin{equation} \label{eq:Q}
Q_{ij}(u,v) = \begin{cases}\hs{5ex} 0 \ \ & \text{if $i=j$,} \\
\sum\limits_{p,q\ge0}
t_{i,j;p,q} u^p v^q\quad& \text{if $i \neq j$,}
\end{cases}
\end{equation}
where $t_{i,j;p,q}\in \cor_{ -2(\alpha_i | \alpha_j)-(\alpha_i|\alpha_i) p - (\alpha_j|\alpha_j)q }$ and
 $t_{i,j;-a_{ij},0} \in \cor_0^\times$.
In particular, we have $t_{i,j;p,q}=0$ if
$(\alpha_i|\alpha_i) p + (\alpha_j|\alpha_j)q >-2(\alpha_i | \alpha_j)$.
Note that $t_{i,j;p,q} = t_{j,i;q,p}$.

We denote by
$S_{n} = \langle s_1, \ldots, s_{n-1} \rangle$ the symmetric group
on $n$ letters, where $s_i = (i, i+1)$ is the transposition.
Then $S_n$ acts on $I^n$.

\begin{Def}[\cite{KL09,{R08}}] \label{def:KLRalg}
The {\em \KLR}\ $R(n)$ of degree $n$
associated with a Cartan datum $(A, P, \Pi, P^{\vee}, \Pi^{\vee})$ and
$(Q_{ij})_{i,j\in I}$ is the associative algebra over $\cor$
generated by $e(\nu)$ $(\nu \in I^{n})$, $x_k$ $(1 \le k \le n)$,
$\tau_l$ $(1 \le l \le n-1)$ satisfying the following defining
relations:
\begin{equation} \label{eq:KLR}
\begin{aligned}
& e(\nu) e(\nu') = \delta_{\nu, \nu'} e(\nu), \ \
\sum_{\nu \in I^{n}}  e(\nu) = 1, \\
& x_{k} x_{l} = x_{l} x_{k}, \ \ x_{k} e(\nu) = e(\nu) x_{k}, \\
& \tau_{l} e(\nu) = e(s_{l}(\nu)) \tau_{l}, \ \ \tau_{k} \tau_{l} =
\tau_{l} \tau_{k} \ \ \text{if} \ |k-l|>1, \\
& \tau_{k}^2 e(\nu) = Q_{\nu_{k}, \nu_{k+1}} (x_{k}, x_{k+1})
e(\nu), \\
& (\tau_{k} x_{l} - x_{s_k(l)} \tau_{k}) e(\nu) = \begin{cases}
-e(\nu) \ \ & \text{if} \ l=k, \nu_{k} = \nu_{k+1}, \\
e(\nu) \ \ & \text{if} \ l=k+1, \nu_{k}=\nu_{k+1}, \\
0 \ \ & \text{otherwise},
\end{cases} \\[.5ex]
& (\tau_{k+1} \tau_{k} \tau_{k+1}-\tau_{k} \tau_{k+1} \tau_{k}) e(\nu)\\
&\hs{8ex} =\begin{cases} \dfrac{Q_{\nu_{k}, \nu_{k+1}}(x_{k},
x_{k+1}) - Q_{\nu_{k+2}, \nu_{k+1}}(x_{k+2}, x_{k+1})}
{x_{k} - x_{k+2}}e(\nu) \ \ & \text{if} \
\nu_{k} = \nu_{k+2}, \\
0 \ \ & \text{otherwise}.
\end{cases}
\end{aligned}
\end{equation}
\end{Def}

In particular, $R(0)\simeq\cor$,
and $R(1)$ is isomorphic to $\cori[x_1]$, where
$\cori=\oplus_{i\in I}\cor e(i)$ is the direct sum of
the copies
$\cor e(i)$ of the algebra $\cor$.

Note that $R(n)$ has an anti-involution $\psi$ that fixes the
generators $x_k$, $\tau_l$ and $e(\nu)$.

The $\Z$-grading on $R(n)$ is given by
\begin{equation} \label{eq:Z-grading}
\deg e(\nu) =0, \quad \deg\; x_{k} e(\nu) = (\alpha_{\nu_k}
| \alpha_{\nu_k}), \quad\deg\; \tau_{l} e(\nu) = -
(\alpha_{\nu_l} | \alpha_{\nu_{l+1}}).
\end{equation}

For $a,b,c \in \{1,\ldots,n\}$, we define the following
elements of $R(n)$ by
\begin{equation}
\begin{aligned}
& e_{a,b} = \sum_{\nu \in I^{n},\, \nu_{a}=\nu_{b}} e(\nu), \\
& Q_{a,b} = \sum_{\nu \in I^n} Q_{\nu_{a},\, \nu_{b}} (x_{a},
x_{b}) e(\nu), \\
& \overline{Q}_{a,b,c} = \sum_{\nu \in I^{n},\; \nu_{a} = \nu_{c}}
\dfrac{Q_{\nu_{a}, \nu_{b}}(x_{a}, x_{b}) - Q_{\nu_{a},
\nu_{b}}(x_{c}, x_{b})}{x_{a} - x_{c}} e(\nu) \quad\text{if $a\not=c$.}
\end{aligned}\label{def:Q}
\end{equation}
Then we have
\begin{equation}
\begin{aligned}
& Q_{a,b} = Q_{b,a}, \quad \tau_{a}^2 = Q_{a, a+1}, \\
& \tau_{a+1} \tau_{a} \tau_{a+1} = \tau_{a} \tau_{a+1} \tau_{a} +
\overline{Q}_{a, a+1, a+2}.
\end{aligned}\label{eq:tau3}
\end{equation}
We define the operators $\partial_{a,b}$ on $\soplus_{\nu\in
I^n}\cor[x_1, \ldots, x_n]e(\nu)$ by
\begin{equation}
\partial_{a,b} f = \dfrac{s_{a,b} f - f} {x_{a} - x_{b}}e_{a,b}, \quad
\partial_{a} = \partial_{a,a+1},
\end{equation}
where $s_{a,b} = (a,b)\in S_n$ is the transposition acting on
$\soplus_{\nu\in I^n} \cor[x_1, \ldots, x_n]e(\nu)$.

Thus we obtain
\begin{equation} \label{eq:partial}
\begin{aligned}
& \overline{Q}_{a,b,c} = - \partial_{a,c} Q_{a,b} = \partial_{a,c}
Q_{b,c}, \\
& \tau_{a} e_{b,c} = e_{s_a(b), s_a(c)} \tau_{a}, \\
& \tau_{a} f - (s_a f) \tau_{a} = f \tau_{a} - \tau_{a} (s_a f) =
(\partial_{a} f) e_{a,a+1}.
\end{aligned}
\end{equation}

For $n\in \Z_{\ge 0}$ and $\beta \in Q^{+}$ such that $|\beta|=n$, we set
$$I^{\beta} = \set{ \nu = (\nu_1, \ldots, \nu_n) \in I^n }%
{\alpha_{\nu_1} + \cdots + \alpha_{\nu_n} = \beta }.$$
We define
\begin{equation}
\begin{aligned}
& R(m,n) = R(m)
\otimes_{\cor} R(n), \\
& e(n) = \sum_{\nu \in I^n} e(\nu), \quad e(\beta) = \sum_{\nu \in
I^{\beta}} e(\nu), \\
&  R(\beta) = R(n) e(\beta)=\soplus_{\nu\in I^\beta}R(n)e(\nu), \\
& e(n,i) = \sum_{\nu \in I^{n+1},\; \nu_{n+1} =i}\hs{-3ex} e(\nu)\in
R(n+1), \quad e(i,n)
= \sum_{\nu \in I^{n+1},\; \nu_{1}=i}\hs{-3ex} e(\nu)\in R(n+1), \\
& e(\beta, i) = \sum_{\nu \in I^{\beta + \alpha_i},\; \nu_{n+1} = i}\hs{-3ex}
e(\nu)\in R(\beta+\alpha_i), \quad e(i, \beta) =\hs{-3ex}
 \sum_{\nu \in I^{\beta + \alpha_i},\;\nu_{1}=i}\hs{-3ex} e(\nu)\in R(\beta+\alpha_i).
\end{aligned}
\end{equation}
The algebra $R(\beta)$ is called the {\it {\KLR} at $\beta$}.

Hereafter we will use $\otimes$
instead of $\otimes_{\cor}$ for the sake of simplicity.
By the embedding
$$R(m,n) = R(m) \otimes R(n) \hookrightarrow R(m+n)
\qquad(a\otimes b\longmapsto ab),$$ we regard $R(m,n)$ as a
subalgebra of $R(m+n)$.
For an $R(m)$-module $M$ and an
$R(n)$-module $N$, we define their {\em convolution product} $M\circ
N$ by \eq M\circ N\seteq R(m+n)\otimes_{R(m) \otimes R(n)}(M\otimes
N). \label{def:conv} \eneq Since $R(m+n)$ is a flat module over
$R(m) \otimes R(n)$ (\cite[Proposition 2.16]{KL09}), the bifunctor
$(M,N)\longmapsto M\circ N$ is exact in $M$ and in $N$.

\begin{Prop}[{\cite[Proposition 2.16]{KL09}}] \label{prop:R(n+1)}
We have a decomposition
$$R(n+1) = \bigoplus_{a=1}^{n+1} R(n,1) \tau_{n} \cdots \tau_{a}
= \bigoplus_{a=1}^{n+1} R(n) \otimes \cori[x_{n+1}] \tau_{n} \cdots
\tau_{a}$$ as $R(n,1)$-modules. Here, when $a=n+1$, we understand
$R(n,1)\tau_n \cdots \tau_a = R(n,1)$. In particular, $R(n+1)$ is a
free $R(n,1)$-module of rank $n+1$.
\end{Prop}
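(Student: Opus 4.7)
The plan is to combine the PBW-type basis theorem for KLR algebras (established in \cite{KL09}) with the standard minimal-length coset decomposition of $S_{n+1}$ modulo the subgroup $S_n$ fixing $n+1$. The $n+1$ minimal length coset representatives are precisely $w_a \seteq s_n s_{n-1} \cdots s_a$ for $1 \le a \le n$, together with $w_{n+1} \seteq e$; every $w \in S_{n+1}$ admits a unique factorization $w = v \cdot w_a$ with $v \in S_n$ and $\ell(w) = \ell(v) + \ell(w_a)$.

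First I would establish spanning: $R(n+1) = \sum_{a=1}^{n+1} R(n,1)\, \tau_n \cdots \tau_a$. By the KLR PBW basis, $R(n+1)$ is generated as a left $\cori[x_1,\ldots,x_{n+1}]$-module by elements $\tau_w e(\nu)$ with $w \in S_{n+1}$ and $\nu \in I^{n+1}$, where $\tau_w$ is defined by choosing a reduced expression for $w$. Given $w = vw_a$ with $v \in S_n$ and $\ell(w)=\ell(v)+\ell(w_a)$, concatenating a reduced word for $v$ (in $s_1,\ldots,s_{n-1}$) with the reduced word $s_n s_{n-1}\cdots s_a$ yields a reduced expression for $w$; choosing $\tau_w$ accordingly gives $\tau_w = \tau_v \cdot \tau_n \tau_{n-1}\cdots\tau_a$, and $\tau_v \in R(n) \subset R(n,1)$. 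To place a general monomial $x_1^{b_1}\cdots x_{n+1}^{b_{n+1}}$ on the left of $\tau_n\cdots\tau_a$, the variables $x_1,\ldots,x_n$ can be absorbed directly into $R(n,1)$, while powers of $x_{n+1}$ are commuted past $\tau_n\cdots\tau_a$ using the relation $(\tau_k x_l - x_{s_k(l)}\tau_k)e(\nu) \in \{0,\pm e(\nu)\}$; each application either produces $x_a \tau_n \cdots \tau_a$ (which lies in $R(n,1)\tau_n\cdots\tau_a$) or a correction term strictly shorter in $\tau$-length, i.e.\ in $\sum_{a'>a} R(n,1)\tau_n\cdots\tau_{a'}$. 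Induction on $\ell(w_a)$ finishes the spanning argument.

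Next I would establish that the sum is direct, using a rank count over the polynomial subalgebra $\cori[x_1,\ldots,x_{n+1}]$. By the PBW theorem, $R(n+1)$ is free of rank $(n+1)!$ over $\cori[x_1,\ldots,x_{n+1}]$ with basis $\{\tau_w e(\nu) : w \in S_{n+1},\ \nu \in I^{n+1}\}$, and similarly $R(n,1) = R(n)\otimes \cori[x_{n+1}]$ is free of rank $n!$ over $\cori[x_1,\ldots,x_{n+1}]$ with basis indexed by $S_n$. Each candidate summand $R(n,1)\, \tau_n\cdots\tau_a$ is then spanned by $\{\tau_v \tau_n \cdots \tau_a e(\nu) : v \in S_n,\ \nu \in I^{n+1}\}$. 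Because the $n!$ elements $\{v w_a : v \in S_n\}$ are precisely the coset $S_n w_a$ and comprise distinct elements of $S_{n+1}$, these $\tau$-elements form a subset of the PBW basis of $R(n+1)$, so $R(n,1)\, \tau_n\cdots\tau_a$ is in fact free of rank $n!$. The ranks of the summands add to $(n+1)\cdot n! = (n+1)!$, matching the rank of $R(n+1)$; together with surjectivity this forces the sum to be direct and simultaneously proves freeness of rank $n+1$ as an $R(n,1)$-module.

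The main obstacle is the spanning step: bookkeeping of the commutation corrections produced when pushing $x_{n+1}$ past the product $\tau_n\cdots\tau_a$. The corrections are always shorter in the $\tau$-length filtration $\mathcal{F}_a \seteq \sum_{a'\ge a} R(n,1)\tau_n\cdots\tau_{a'}$, so a careful induction on $n-a$ together with the relations \eqref{eq:KLR} controls them; once this is set up, the rank-counting step is immediate from the PBW theorem.
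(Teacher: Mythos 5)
Your approach — the right coset decomposition $S_{n+1}=\coprod_{a=1}^{n+1}S_n s_n\cdots s_a$ combined with the KLR PBW basis — is exactly what the paper's one-line sketch indicates, so the strategy is right. The second half (the rank count via disjoint subsets of the PBW basis) is correct and in fact already gives both spanning and directness at once: choose the PBW basis of $R(n+1)$ using reduced words $v\cdot s_n\cdots s_a$, note that $R(n,1)\tau_n\cdots\tau_a$ is by the same token spanned over $\soplus_\nu\cor[x_1,\dots,x_{n+1}]e(\nu)$ by the $n!$ elements $\{\tau_v\tau_n\cdots\tau_a e(\nu)\}_{v\in S_n}$ coming from the coset $S_n w_a$, and these cosets partition $S_{n+1}$. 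Nothing more is needed.

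The issue is in your separate spanning argument, which is both unnecessary and incorrect as written. You treat $x_{n+1}$ as though it were not already in $R(n,1)$ and propose to commute it through $\tau_n\cdots\tau_a$. But $R(n,1)=R(n)\otimes\cori[x_{n+1}]$, so $x_{n+1}$ \emph{is} in $R(n,1)$: since $x_{n+1}$ commutes with every $\tau_v$ ($v\in S_n$), the factor $x_1^{b_1}\cdots x_{n+1}^{b_{n+1}}\tau_v e(\nu)$ lies in $R(n,1)$ with no rewriting required, and $\tau_w=\tau_v\tau_n\cdots\tau_a\in R(n,1)\tau_n\cdots\tau_a$ is immediate. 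Furthermore, the commutation you describe does not land where you claim: pushing $x_{n+1}$ rightward through $\tau_n\cdots\tau_a$ gives the main term $\tau_n\cdots\tau_a x_a$ with $x_a$ on the \emph{right}, not ``$x_a\tau_n\cdots\tau_a$'' on the left; the element $\tau_n\cdots\tau_a x_a$ is not visibly in the left $R(n,1)$-submodule $R(n,1)\tau_n\cdots\tau_a$, so taken at face value the argument is circular. Once you drop this detour the proof is clean and matches the reference; the subordinate claim about correction terms landing in $\sum_{a'>a}R(n,1)\tau_n\cdots\tau_{a'}$ is true (each correction is $\tau_n\cdots\tau_{k+1}\,\epsilon\,\tau_{k-1}\cdots\tau_a$ with the two blocks commuting), but it is not needed.
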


\begin{proof}[Sketch of Proof]
Our assertion follows from the right coset decomposition
of $S_{n+1}$:
$$S_{n+1} = \coprod_{a=1}^{n+1} S_{n} s_{n} \cdots s_{a}.$$
\end{proof}

\begin{Prop} \label{prop:R(n)xR(n)}
The $(R(n), R(n))$-bimodule homomorphism
$$R(n) \otimes_{R(n-1)} R(n) \To R(n+1)$$
given by $$x \otimes y \longmapsto x \tau_n y \ \ (x, y \in R(n))$$
is well-defined. Moreover, together with the $(R(n),
R(n))$-bimodule embedding $R(n,1) \hookrightarrow R(n+1)$, it
induces an isomorphism of $(R(n),R(n))$-bimodules
$$R(n) \otimes_{R(n-1)} R(n)\oplus R(n,1)  \overset {\sim} \longrightarrow
R(n+1).$$
\end{Prop}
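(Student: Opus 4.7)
The plan is to verify well-definedness and then to show $\Phi$ is an isomorphism by comparing both sides, as free left $R(n)$-modules, via the projection $\pi\colon R(n+1)\twoheadrightarrow R(n+1)/R(n,1)$. The well-definedness of the map $\phi\colon R(n)\otimes_{R(n-1)}R(n)\to R(n+1)$, $x\otimes y\mapsto x\tau_n y$, amounts to checking that $\tau_n$ commutes with every element of $R(n-1)\subset R(n+1)$. Since $R(n-1)$ is generated by $e(\nu)$ ($\nu\in I^{n-1}$), $x_k$ ($k\le n-1$), and $\tau_l$ ($l\le n-2$), this is immediate from \eqref{eq:KLR}: $s_n$ fixes indices $\le n-1$, $|l-n|>1$ for $l\le n-2$, and $\tau_n\sum_{\mu\in I^2}e(\nu,\mu)=\sum_\mu e(\nu,\mu)\tau_n$. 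Combined with the subalgebra embedding $R(n,1)\hookrightarrow R(n+1)$, this yields the bimodule morphism $\Phi$.

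The heart of the argument is to describe both sides as free left $R(n)$-modules with matching bases. By Proposition~\ref{prop:R(n+1)}, $R(n+1)/R(n,1)$ is a free left $R(n,1)$-module on $\{\tau_n\tau_{n-1}\cdots\tau_a\}_{1\le a\le n}$, and since $R(n,1)\simeq R(n)\otimes\cor[x_{n+1}]$ is free over $R(n)$ on $\{x_{n+1}^k\}_{k\ge 0}$, the quotient becomes a free left $R(n)$-module on $\{x_{n+1}^k\tau_n\tau_{n-1}\cdots\tau_a\}_{k\ge 0,\,1\le a\le n}$. Applying the same proposition with $n-1$ in place of $n$, $R(n)$ is a free left $R(n-1)$-module on $\{x_n^k\tau_{n-1}\cdots\tau_a\}_{k\ge 0,\,1\le a\le n}$, so $R(n)\otimes_{R(n-1)}R(n)$ is a free left $R(n)$-module on $\{1\otimes x_n^k\tau_{n-1}\cdots\tau_a\}$.

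The key computation is that $\pi\circ\phi$ sends these bases to one another. From the relation $\tau_n x_n=x_{n+1}\tau_n-e_{n,n+1}$ read off from \eqref{eq:KLR}, induction on $k$ yields $\tau_n x_n^k=x_{n+1}^k\tau_n+G_k$ with $G_k\in R(n,1)$; since $\tau_{n-1}\cdots\tau_a\in R(n)\subset R(n,1)$, the term $G_k\tau_{n-1}\cdots\tau_a$ lies in $R(n,1)$, whence
$$\pi\bigl(\phi(1\otimes x_n^k\tau_{n-1}\cdots\tau_a)\bigr)=x_{n+1}^k\tau_n\tau_{n-1}\cdots\tau_a\quad\text{in }R(n+1)/R(n,1).$$
This matches the two bases under the identification $(k,a)\mapsto(k,a)$, so $\pi\circ\phi$ is an isomorphism of left $R(n)$-modules. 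Hence $\phi$ is injective, $\im(\phi)\cap R(n,1)=0$, and for any $r\in R(n+1)$ the unique preimage $u$ of $\pi(r)$ under $\pi\circ\phi$ satisfies $r-\phi(u)\in R(n,1)$, giving surjectivity. Therefore $\Phi$ is a bimodule isomorphism. The main obstacle is precisely the congruence $\tau_n x_n^k\equiv x_{n+1}^k\tau_n\pmod{R(n,1)}$, which is a straightforward induction from the KLR relations but is exactly the engine that makes the two bases match.
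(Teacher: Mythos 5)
Your proof is correct and follows essentially the same strategy as the paper's: reduce to showing the composite with $\pi\colon R(n+1)\to R(n+1)/R(n,1)$ is an isomorphism, use the PBW-type decomposition from Proposition~\ref{prop:R(n+1)} to write both sides as free $R(n)$-modules, and match bases via the congruence that slides $x_n$ (resp.\ a polynomial in $x_n$) past $\tau_n$ modulo $R(n,1)$. The only difference is cosmetic: you work with left $R(n)$-module structures and the right-coset decomposition, while the paper uses the mirror version (right $R(n)$-modules, left cosets); also note that $R(n,1)$ is really $R(n)\otimes\cori[x_{n+1}]$, so the free-module bases should carry the idempotents $e(n,i)$, though this does not affect the argument.
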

\begin{proof}
The homomorphism $R(n) \otimes_{R(n-1)} R(n) \longrightarrow R(n+1)$
is well-defined since $\tau_n$ commutes with $R(n-1)$.
It induces a homomorphism
 $$\psi\cl R(n) \otimes_{R(n-1)} R(n) \rightarrow R(n+1)
/ R(n,1)$$ and it is enough to show that $\psi$ is an isomorphism.

Since $R(n) = \bigoplus_{a=1}^{n} \tau_{a} \cdots \tau_{n-1}
\cori[x_{n}] \otimes R(n-1)$, we have
\begin{equation*}
\begin{aligned}
R(n) \otimes_{R(n-1)} R(n) &= \bigoplus_{a=1}^{n} \bigl(\tau_{a} \cdots
\tau_{n-1} \cori[x_{n}] \otimes R(n-1)\bigr) \otimes_{R(n-1)} R(n) \\
& \cong \bigoplus_{a=1}^{n} \tau_{a} \cdots \tau_{n-1}\cori[x_{n}]
\otimes R(n), \\
R(n+1) / R(n,1) &= \dfrac{\bigoplus_{a=1}^{n+1} \tau_{a} \cdots
\tau_{n} \cori[x_{n+1}] \otimes R(n)} {\cori[x_{n+1}] \otimes R(n)}.
\end{aligned}
\end{equation*}
Using
\eqref{eq:partial}, one can verify for $f(x_n)\in\cori[x_n]$, $y\in R(n)$
and $1\le a\le n$
\begin{equation*}
\begin{aligned}
\tau_{a} \cdots \tau_{n-1} f(x_{n}) \tau_{n}y & = \tau_{a}
\cdots \tau_{n-1} (\tau_{n} f(x_{n+1})+\partial_{n} f(x_{n}))y \\
& = \tau_{a} \cdots \tau_{n-1} \tau_{n} f(x_{n+1}) y +
\tau_{a} \cdots \tau_{n-1} \partial_{n}f(x_{n}) y \\
& \equiv \tau_{a} \cdots \tau_{n} f(x_{n+1}) y
\mod \,R(n,1).
\end{aligned}
\end{equation*}
Hence $\psi$ is an isomorphism.
\end{proof}

As an immediate corollary, we obtain:

\begin{Cor} \label{cor:eRe}
There exists a natural isomorphism
\begin{equation*}
\begin{aligned}
& e(n,i) R(n+1) e(n,j) \\
& \simeq
\begin{cases}
q^{-(\alpha_i|\alpha_j)}R(n) e(n-1, j) \otimes_{R(n-1)} e(n-1, i) R(n)
& \text{if $i\neq j$,} \\
q^{-(\alpha_i|\alpha_i)}R(n) e(n-1, i) \otimes_{R(n-1)} e(n-1, i) R(n)
\oplus e(n,i) R(n,1)\, e(n,i)& \text{if $i=j$.}
\end{cases}
\end{aligned}
\end{equation*}
Here, $q$ is the grade-shift functor {\rm(}see \eqref{eq:shift}
below{\rm)}.
\end{Cor}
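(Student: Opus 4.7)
The plan is to take the $(R(n),R(n))$-bimodule isomorphism
\[
\psi\colon R(n)\otimes_{R(n-1)}R(n)\,\oplus\,R(n,1)\isoto R(n+1),\qquad x\otimes y\longmapsto x\tau_n y,
\]
from Proposition~\ref{prop:R(n)xR(n)} and cut both sides by the idempotents $e(n,i)$ on the left and $e(n,j)$ on the right. The computation then splits according to the two summands of $\psi$.

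The $R(n,1)$-summand is straightforward. Since $R(n,1)=R(n)\otimes R(1)$ with $e(n,i)=1\otimes e(i)$ and $e(n,j)=1\otimes e(j)$, I would write $e(n,i)R(n,1)e(n,j)=R(n)\otimes e(i)R(1)e(j)$. Using $R(1)=\soplus_{k\in I}\cor[x_1]e(k)$, the factor $e(i)R(1)e(j)$ vanishes unless $i=j$, in which case the whole summand is exactly $e(n,i)R(n,1)e(n,i)$, matching the second piece in the statement.

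For the convolution summand, the subtle point is that $e(n,i)$ and $e(n,j)$ live in $R(n,1)$ rather than in $R(n)$, so their one-sided multiplications must be transported through $\psi$ to a pulled-back action on $R(n)\otimes_{R(n-1)}R(n)$. Using the commutation relations $e(n,i)\tau_n=\tau_n e(n-1,i)$ and $\tau_n e(n,j)=e(n-1,j)\tau_n$, which follow directly from $\tau_n e(\nu)=e(s_n\nu)\tau_n$, together with the observation that every $x\in R(n)\subset R(n,1)$ commutes with both $e(n,i)$ and $e(n,j)$, I would derive
\[
e(n,i)(x\tau_ny)e(n,j)=(xe(n-1,j))\,\tau_n\,(e(n-1,i)y).
\]
Hence the transported left/right action on $R(n)\otimes_{R(n-1)}R(n)$ is $x\otimes y\mapsto xe(n-1,j)\otimes e(n-1,i)y$, and taking its image yields the subbimodule $R(n)e(n-1,j)\otimes_{R(n-1)}e(n-1,i)R(n)$. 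This image genuinely embeds into $R(n)\otimes_{R(n-1)}R(n)$ because $e(n-1,j)\in R(n)$ supplies a right-$R(n-1)$-module direct-sum decomposition $R(n)=R(n)e(n-1,j)\oplus R(n)(1-e(n-1,j))$, and likewise on the other factor.

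The grading shift is then read off from the formula $x\otimes y\mapsto x\tau_n y$. For $x\in R(n)e(n-1,j)$ and $y\in e(n-1,i)R(n)$, only the component $\tau_n e(\mu)$ with $\mu_n=i$ and $\mu_{n+1}=j$ contributes, and by \eqref{eq:Z-grading} this has degree $-(\alpha_i|\alpha_j)$. Consequently the isomorphism decreases total degree by $(\alpha_i|\alpha_j)$, which accounts for the factor $q^{-(\alpha_i|\alpha_j)}$ in the $i\ne j$ case and $q^{-(\alpha_i|\alpha_i)}$ in the $i=j$ case. The only step requiring real care is the transport of the $e(n,i)$-- and $e(n,j)$--actions through $\psi$; once that bookkeeping is settled, the corollary is a formal consequence of Proposition~\ref{prop:R(n)xR(n)}.
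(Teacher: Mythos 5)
Your proof is correct and follows essentially the same route as the paper's, which simply asserts in one line that cutting the decomposition of Proposition~\ref{prop:R(n)xR(n)} by $e(n,i)$ on the left and $e(n,j)$ on the right yields $R(n)e(n-1,j)\otimes_{R(n-1)}e(n-1,i)R(n)\oplus e(n,i)R(n,1)e(n,j)$, and then notes $e(n,i)R(n,1)e(n,j)=0$ for $i\neq j$. You have supplied the details the paper leaves implicit: the commutations $e(n,i)\tau_n=\tau_ne(n-1,i)$, $\tau_ne(n,j)=e(n-1,j)\tau_n$, the identification of the transported action as projection onto a genuine direct summand, the vanishing of $e(i)R(1)e(j)$, and the degree bookkeeping for $\tau_n$.
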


\begin{proof}
Disregarding the grading, we have by Proposition \ref{prop:R(n)xR(n)}
\begin{equation*}
\begin{aligned}
& e(n,i)R(n+1)e(n,j)  \cong e(n,i) \left(R(n) \otimes_{R(n-1)} R(n)
\oplus R(n,1) \right) e(n,j) \\
& \qquad = R(n) e(n-1, j) \otimes_{R(n-1)} e(n-1, i) R(n) \ \oplus \
e(n,i) R(n,1) e(n,j).
\end{aligned}
\end{equation*}
Our assertion then follows
immediately from $e(n,i)R(n,1)e(n,j) =0$ for $i \neq j$.
\end{proof}

For $\beta \in Q^{+}$, let $\Mod(R(\beta))$ denote the abelian
category of $\Z$-graded $R(\beta)$-modules. Let $q$ denote
the grade-shift functor on $\Mod(R(\beta))$: for a $\Z$-graded
$R(\beta)$-module $M=\bigoplus_{k\in \Z} M_{k}$, we define $qM =
\bigoplus_{k\in \Z} (qM)_{k}$ by \eq &&(qM)_k = M_{k-1} \quad (k \in
\Z). \label{eq:shift} \eneq and we sometimes use the notation
$q_i\seteq q^{(\alpha_i|\alpha_i)/2}$. Thus if $M$ is concentrated
at degree $k$, then $qM$ is concentrated at degree $k+1$.

\begin{Rem}
Let $M$ be a graded $\cor[x]$-module, where $x$ is homogeneous of
degree $a$. Then the multiplication  by $x$ is a morphism $q^{a} M
\overset{x}\longrightarrow M$. It can be understood as a degree
preserving map $q^as\longmapsto xs$ ($s\in M$) by assigning degree
$1$ to $q$.
\end{Rem}

In general, for associative algebras $A$ and $B$, an
$(A, B)$-bimodule $K$ induces a functor $\Phi_{K}\cl \Mod(B)
\rightarrow \Mod(A)$ given by $N \longmapsto K \otimes_{B} N$. In
this case, we say that $K$ is the {\em kernel} of $\Phi_{K}$. Note that
$\Phi_{K}(B) = K \otimes_{B} B \simeq K$,
and hence the kernel is uniquely determined by the functor
$\Phi_K$.

For each $i \in I$, we define the functors
\begin{equation*}
\begin{aligned}
& E_{i}\cl \Mod(R(\beta+\alpha_i)) \longrightarrow \Mod(R(\beta)), \\
& F_{i}\cl \Mod(R(\beta)) \longrightarrow \Mod(R(\beta+ \alpha_i))
\end{aligned}
\end{equation*}
by
\eq \label{eq:E_i,F_i}
\ba{rl}
 E_{i}(N)& = e(\beta, i) N \simeq e(\beta, i)R(\beta+ \alpha_i)
\otimes_{R(\beta+\alpha_i)} N\\[.3ex]
&\hs{11ex} \simeq\Hom_{R(\beta+\alpha_i)}\bl R(\beta+\alpha_i)
e(\beta, i),N\br,
\\[1ex]
 F_{i}(M)& =
M\circ R(\alpha_i)= R(\beta+\alpha_i) e(\beta, i) \otimes_{R(\beta)} M
\ea
\eneq
for $M \in \Mod(R(\beta))$ and $N \in\Mod(R(\beta+\alpha_i))$.

By \cite[Proposition 2.16]{KL09} (see also Proposition \ref{prop:R(n+1)}),
both $E_i$ and $F_i$ are exact functors. Moreover, $F_i$ and $E_i$
are left and right adjoint to each other. That is, for $M \in
\Mod(R(\beta))$ and $N \in \Mod(R(\beta+ \alpha_i))$, there exists a
natural isomorphism
$$\Hom_{R(\beta + \alpha_i)} (F_{i}(M), N)
\isoto \Hom_{R(\beta)} (M, E_{i}(N)).$$ Hence
we obtain adjunction transformations: the counit $\varepsilon\cl
F_{i} \circ E_{i} \longrightarrow \Id$ and the unit
$\eta\cl \Id \longrightarrow E_{i} \circ F_{i}$. We define the
natural transformations
$$x_{E_{i}}\cl E_{i} \rightarrow E_{i} \quad\text{and}\quad x_{F_{i}}\cl  F_{i}
\rightarrow F_{i}$$ as follows:

(a) $x_{E_i}$ is given by the left multiplication by $x_{n+1}$ on
$e(\beta, i) N$ for $N \in \Mod(R(\beta+\alpha_i))$,

(b) $x_{F_i}$ is given by the right multiplication by $x_{n+1}$ on
the kernel $R(\beta+ \alpha_i) e(\beta, i)$ of the functor $F_{i}$,
where $n = |\beta|$.

\vskip 3mm

Then we obtain the following commutative diagram:
\begin{equation} \label{A:comm}
\ba{c}
\xymatrix{ \Hom_{R(\beta+\alpha_i)}(F_{i}(M), N)
\ar[d]^-{x_{F_i}}\ar[r]^-{\sim} & \Hom_{R(\beta)}(M, E_{i}(N))
\ar[d]^-{x_{E_i}}
\\ \Hom_{R(\beta+ \alpha_i)} (F_{i}(M), N)
 \ar[r]^-{\sim} & \Hom_{R(\beta)}(M, E_{i}(N))\,.
 }\ea
\end{equation}

The main properties of the functors $E_{i}$ and $F_{i}$ are given in
the following theorem.

\begin{Thm}\label{thm:E_i}
There exist  natural isomorphisms
\begin{equation*}
E_{i} F_{j} \overset{\sim} \longrightarrow
\begin{cases} q^{-(\alpha_i | \alpha_j)} F_{j} E_{i}\ \ & \text{if}
\ i \neq j, \\
q^{-(\alpha_i| \alpha_i)} F_{i} E_{i} \oplus \Id \otimes \cor[t_i] \ \
& \text{if} \ i=j,
\end{cases}
\end{equation*}
where $t_i$ is an indeterminate of degree $(\alpha_i |
\alpha_i)$ and $\Id \otimes \cor[t_i]\cl
\Mod(R(\beta))\to\Mod(R(\beta))$ is the functor sending $M$
to $M\otimes\cor[t_i]$.
\end{Thm}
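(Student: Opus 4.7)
The plan is to translate the statement into an isomorphism of bimodule kernels and then invoke Corollary \ref{cor:eRe}. Any functor of the form $\Phi_K\cl \Mod(B)\to\Mod(A)$, $N\mapsto K\otimes_B N$, is determined by the bimodule $K$, and composition of such functors corresponds to tensor product of kernels. In the notation of \eqref{eq:E_i,F_i}, the kernels of $F_j$ and $E_i$ (restricted to the relevant weight components) are $R(n+1)e(n,j)$ as an $(R(n+1),R(n))$-bimodule and $e(n,i)R(n+1)$ as an $(R(n),R(n+1))$-bimodule respectively. Hence the kernel of $E_iF_j$ is
\[
e(n,i)R(n+1)\otimes_{R(n+1)} R(n+1)e(n,j)\simeq e(n,i)R(n+1)e(n,j),
\]
while the kernel of $F_jE_i$ is $R(n)e(n-1,j)\otimes_{R(n-1)} e(n-1,i)R(n)$.

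In the case $i\neq j$, Corollary \ref{cor:eRe} gives exactly
\[
e(n,i)R(n+1)e(n,j)\simeq q^{-(\alpha_i|\alpha_j)}\,R(n)e(n-1,j)\otimes_{R(n-1)} e(n-1,i)R(n)
\]
as $(R(n),R(n))$-bimodules, so tensoring the identity functor against this isomorphism yields the required natural isomorphism $E_iF_j\isoto q^{-(\alpha_i|\alpha_j)}F_jE_i$. For $i=j$, the corollary gives the additional summand $e(n,i)R(n,1)e(n,i)$, which must be identified with the kernel of $\Id\otimes\cor[t_i]$.

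To handle that second summand, I would unpack $R(n,1)=R(n)\otimes\cori[x_{n+1}]$ and the description of $e(n,i)$ inside $R(n,1)$ as $1_{R(n)}\otimes e(i)$. Since $e(i)e(j)=\delta_{ij}e(i)$, one gets
\[
e(n,i)R(n,1)e(n,i)=R(n)\otimes e(i)\,\cor[x_{n+1}]\,e(i)\simeq R(n)\otimes\cor[t_i],
\]
as $(R(n),R(n))$-bimodules, with $t_i$ corresponding to the image of $x_{n+1}e(i)$. By \eqref{eq:Z-grading}, $\deg x_{n+1}e(i)=(\alpha_i|\alpha_i)$, so $t_i$ has the stated degree. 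This bimodule represents precisely the functor $M\mapsto M\otimes\cor[t_i]$, which completes the identification.

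The routine part is the bimodule bookkeeping and the grading check; the only place where real work was hidden is Corollary \ref{cor:eRe} itself, which in turn rests on Proposition \ref{prop:R(n)xR(n)}. The potential obstacle I would be most careful about is verifying that the splitting in Corollary \ref{cor:eRe} is genuinely one of $(R(n),R(n))$-bimodules (not merely of left or right modules), so that applying $-\otimes_{R(n)}M$ produces a functorial decomposition; this follows from inspecting the proof of Proposition \ref{prop:R(n)xR(n)}, where the map $x\otimes y\mapsto x\tau_n y$ and the embedding $R(n,1)\hookrightarrow R(n+1)$ are both $(R(n),R(n))$-bilinear.
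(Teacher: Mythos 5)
Your proof is correct and takes exactly the route the paper intends: the paper's own proof of Theorem~\ref{thm:E_i} simply cites Corollary~\ref{cor:eRe}, and your writeup supplies the standard kernel/bimodule bookkeeping (including the grading check for $t_i$ and the observation that the splitting in Proposition~\ref{prop:R(n)xR(n)} is genuinely $(R(n),R(n))$-bilinear) that makes that citation legitimate.
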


\begin{proof}
Our assertion is an immediate consequence of Corollary
\ref{cor:eRe}.
\end{proof}

Let $\xi_n\cl R(n)\to R(n+1)$ be the algebra homomorphism given by
\eq &&\ba{rl}
\xi_n(x_{k})&= x_{k+1} \quad (1 \le k \le n),\\[1.5ex]
\xi_n(\tau_{l})&=\tau_{l+1} \quad (1 \le l \le n-1),\\[1.5ex]
\xi_n(e(\nu))&=\sum_{i\in I}e(i,\nu)\quad (\nu\in I^n). \ea \eneq
Let $R^{1}(n)$ be the image of $\xi_n$. Then $R^1(n)$ is the
subalgebra of $R(n+1)$ generated by $x_{2}, \ldots, x_{n+1}$,
$\tau_{2}, \ldots, \tau_{n}$ and $\xi_n(e(\nu))$ ($\nu\in I^n$),
which is isomorphic to $R(n)$.

\begin{Prop} \label{prop:R^{1}}
The $(R(n),R^1(n))$-bimodule homomorphism
\eq
&&R(n) \otimes_{R^1(n-1)} R^{1}(n) \To R(n+1)
\quad \text{given by $x \otimes y \longmapsto xy$.}
\label{eq:mor1}
\eneq
is well-defined.
This homomorphism is injective and its image $R(n) R^{1}(n)$ has
decompositions
$$R(n) R^{1}(n) = \bigoplus_{a=2}^{n+1} R(n,1) \tau_n \cdots
\tau_{a} = \bigoplus_{a=0}^{n-1} \tau_{a} \cdots
\tau_{1} R(1,n).$$
\end{Prop}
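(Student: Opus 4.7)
The plan is to prove well-definedness and then identify the image of $\phi\colon R(n)\otimes_{R^1(n-1)}R^1(n)\to R(n+1)$ together with injectivity via explicit decompositions. Well-definedness is immediate: since $R^1(n-1)$ sits as a subalgebra of both $R(n)$ and $R^1(n)$ inside $R(n+1)$, the map $x\otimes y\mapsto xy$ is balanced over $R^1(n-1)$ by associativity of multiplication in $R(n+1)$. A key preliminary observation is that $R^1(n-1)$ and $\cori[x_{n+1}]$ commute as subalgebras of $R(n+1)$ (a direct check from the defining relations, since the generators of $R^1(n-1)$ involve only indices in $\{2,\ldots,n\}$). This yields a commuting factorization $R^1(n-1,1):=R^1(n-1)\cdot\cori[x_{n+1}]\cong R^1(n-1)\otimes_\cor\cori[x_{n+1}]$.

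For the \textbf{right decomposition and injectivity}, I apply $\xi_n$ to Proposition~\ref{prop:R(n+1)} with $n$ replaced by $n-1$, producing the free left $R^1(n-1,1)$-module decomposition
\[R^1(n)=\bigoplus_{b=2}^{n+1}R^1(n-1,1)\cdot\tau_n\tau_{n-1}\cdots\tau_b.\]
Tensoring with $R(n)$ over $R^1(n-1)$ and using the commuting factorization to identify $R(n)\otimes_{R^1(n-1)}R^1(n-1,1)\cong R(n)\otimes_\cor\cori[x_{n+1}]=R(n,1)$ gives
\begin{align*}
R(n)\otimes_{R^1(n-1)}R^1(n)&\cong\bigoplus_{b=2}^{n+1}\bigl(R(n)\otimes_{R^1(n-1)}R^1(n-1,1)\bigr)\tau_n\cdots\tau_b\\
&\cong\bigoplus_{b=2}^{n+1}R(n,1)\cdot\tau_n\cdots\tau_b.
\end{align*}
Under $\phi$, each summand $R(n,1)\cdot\tau_n\cdots\tau_b$ maps to the corresponding summand of the direct-sum decomposition $R(n+1)=\bigoplus_{a=1}^{n+1}R(n,1)\tau_n\cdots\tau_a$ from Proposition~\ref{prop:R(n+1)}. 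Since the latter is direct, $\phi$ is injective and its image is precisely $\bigoplus_{b=2}^{n+1}R(n,1)\tau_n\cdots\tau_b$.

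For the \textbf{left decomposition}, I apply the symmetric argument. The left analog of Proposition~\ref{prop:R(n+1)} (following from the coset decomposition $S_n=\coprod_{a=0}^{n-1}s_a\cdots s_1\cdot\langle s_2,\ldots,s_{n-1}\rangle$, where $\langle s_2,\ldots,s_{n-1}\rangle$ is the stabilizer of $1$) gives the free right $R(1,n-1)$-module decomposition $R(n)=\bigoplus_{a=0}^{n-1}\tau_a\cdots\tau_1\cdot R(1,n-1)$. Combined with the commuting factorization $R(1,n-1)=\cori[x_1]\cdot R^1(n-1)$, a parallel tensor-product computation yields $R(n)\otimes_{R^1(n-1)}R^1(n)\cong\bigoplus_{a=0}^{n-1}\tau_a\cdots\tau_1\cdot R(1,n)$, matching the second claimed decomposition.

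The \textbf{main technical point} is the identification $R(n)\otimes_{R^1(n-1)}R^1(n-1,1)\cong R(n,1)$, which depends on the commuting factorization of $R^1(n-1,1)$ and requires careful bookkeeping of the idempotent structure; once this is secured, the remainder is formal manipulation using Proposition~\ref{prop:R(n+1)} and its left analog.
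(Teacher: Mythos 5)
Your proof is correct and follows essentially the same route as the paper: decompose $R^1(n)$ as a free left $R^1(n-1)\otimes\cori[x_{n+1}]$-module via the $\xi_n$-translate of Proposition~\ref{prop:R(n+1)}, tensor with $R(n)$ over $R^1(n-1)$, and match summands against the decomposition $R(n+1)=\bigoplus_{a=1}^{n+1}R(n,1)\tau_n\cdots\tau_a$ (and its left-sided analogue). You make explicit some steps the paper compresses — the commuting factorization $R^1(n-1)\cdot\cori[x_{n+1}]\cong R^1(n-1)\otimes\cori[x_{n+1}]$ and the summand-wise agreement under $\phi$ — but the key identification $R(n)\otimes_{R^1(n-1)}\bigl(R^1(n-1)\otimes\cori[x_{n+1}]\bigr)\cong R(n,1)$ and the overall structure coincide with the paper's proof.
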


\begin{proof}
It is obvious that \eqref{eq:mor1} is a well-defined
morphism of $(R(n), R^1(n))$-bimodules.
By Proposition \ref{prop:R(n+1)}, we have the decomposition
$$R^1(n)=\bigoplus_{a=2}^{n+1}R^1(n-1)\otimes \cori[x_{n+1}] \tau_{n}\cdots \tau_{a}.$$
Hence we have
\begin{equation*}
\begin{aligned}
R(n) \otimes_{R^1(n-1)} R^{1}(n) & = R(n) \otimes_{R^1(n-1)}
\Bigl(\soplus_{a=2}^{n+1} R^{1}(n-1) \otimes \cori[x_{n+1}] \tau_{n} \cdots
\tau_{a}\Bigr) \\
& \cong \bigoplus_{a=2}^{n+1} R(n) \otimes \cori[x_{n+1}] \tau_{n}
\cdots \tau_{a} = \bigoplus_{a=2}^{n+1} R(n,1) \tau_{n} \cdots
\tau_{a}.
\end{aligned}
\end{equation*}
Similarly, we have $$R(n+1) = \bigoplus_{a=0}^{n} \tau_{a}
\cdots \tau_{1} R(1, n)$$ and $$R(n) \otimes_{R^1(n-1)} R^{1}(n) \cong
\bigoplus_{a=0}^{n-1} \tau_{a}\cdots \tau_{1} R(1,n).$$ Now our
assertions follow immediately.
\end{proof}

By Proposition \ref{prop:R^{1}}, there exists a map $\varphi_1\cl
R(n+1) \rightarrow R(n) \otimes\cori[x_{n+1}] $ given by

\begin{equation}\label{eq:phi-1}
\begin{aligned}
R(n+1) & \rightarrow \Coker\bigl(R(n) \otimes_{R^1(n-1)} R^{1}(n)\to R(n+1)\bigr)\\
&\cong \dfrac{\bigoplus_{a=1}^{n+1} R(n,1) \tau_{n} \cdots \tau_{a}}
{\bigoplus_{a=2}^{n+1} R(n,1)
\tau_{n} \cdots \tau_{1}}
  \isofrom R(n,1) \tau_{n} \cdots \tau_{1} \\
&\isofrom R(n) \otimes \cori[x_{n+1}].
\end{aligned}
\end{equation}
 Similarly, there is another map $\varphi_2\cl R(n+1)
\rightarrow \cori[x_{1}] \otimes R^{1}(n) $ given by
\begin{equation}\label{eq:phi-2}
\begin{aligned}
R(n+1) \rightarrow &\Coker\bigr(R(n) \otimes_{R^1(n-1)} R^{1}(n)\to
R(n+1)\bigr)
\\&
\cong \dfrac{\bigoplus_{a=0}^{n} \tau_{a} \cdots \tau_{1} R(1,n)}
{\bigoplus_{a=0}^{n-1} \tau_{a} \cdots
\tau_{1} R(1,n)}
 \isofrom \tau_{n} \cdots \tau_{1} R(1,n) \\
 & \isofrom \cori[x_{1}] \otimes R^{1}(n).
\end{aligned}
\end{equation}
Note that
\begin{equation*}
\begin{aligned}
& x_k \tau_n \cdots \tau_1 \equiv \tau_n \cdots \tau_1 x_{k+1} \ \
(1 \le k \le n), \\
& \tau_{l} \tau_{n} \cdots \tau_{1} \equiv \tau_{n} \cdots \tau_{1}
\tau_{l+1} \ \ (1 \le l \le n-1), \\
& x_{n+1} \tau_{n} \cdots \tau_{1} \equiv \tau_{n} \cdots \tau_{1}
x_{1}\hs{6ex}\mod R(n)R^{1}(n).
\end{aligned}
\end{equation*}

We shall identify two algebras $R(n) \otimes \cori[x_{n+1}]$ and
$\cori[x_{1}] \otimes R^{1}(n)$, and write $$R(n) \otimes
\cori[x_{n+1}]= \cori[x_{1}] \otimes R^{1}(n) = R(n) \otimes
\cori[t]$$ for some indeterminate $t$.
Then for any $a\otimes f(t)\in  R(n) \otimes\cori[t]$, we have
$$a f(x_{n+1})\tau_n \cdots \tau_1
\equiv \tau_n \cdots \tau_1\; \xi_n(a)\otimes f(x_1)
\mod R(n)R^{1}(n).$$
Hence $\varphi_1$ and $\varphi_2$ coincide and we obtain:

\begin{Cor} \label{cor:varphi-1}
There is an exact sequence of $(R(n), R(n))$-bimodules
$$0 \longrightarrow R(n) \otimes_{R^1(n-1)} R^{1}(n) \longrightarrow
R(n+1) \overset {\varphi} \longrightarrow R(n) \otimes \cor[t]
\longrightarrow 0, $$ where the map $\varphi$ is given by
\eqref{eq:phi-1} or \eqref{eq:phi-2}. Here, the right $R(n)$-module
structure on $R(n+1)$ is given by the embedding $\xi_{n}\cl
R(n)\overset{\sim} \longrightarrow R^1(n)\hookrightarrow R(n+1)$.
Moreover, both the left multiplication by $x_{n+1}$ and the
right multiplication by $x_1$ on $R(n+1)$ are compatible with the
multiplication by $t$ on $R(n) \otimes \cor[t]$.
\end{Cor}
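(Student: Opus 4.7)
My plan is to read the exact sequence directly off the two decompositions of $R(n+1)$ in Propositions \ref{prop:R(n+1)} and \ref{prop:R^{1}}, and then reduce the remaining assertions --- that $\varphi_1=\varphi_2$, that the two candidate identifications of the cokernel match so that $R(n)\otimes\cor[t]$ is unambiguous, that $\varphi$ is an $(R(n),R(n))$-bimodule map, and that $x_{n+1}$ and $x_1$ both descend to $t$ --- to the three congruences displayed just before the corollary.

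For exactness, Proposition \ref{prop:R(n+1)} gives $R(n+1) = \soplus_{a=1}^{n+1} R(n,1)\tau_n\cdots\tau_a$, while Proposition \ref{prop:R^{1}} identifies the image of $R(n)\otimes_{R^1(n-1)}R^1(n)\hookrightarrow R(n+1)$ with $\soplus_{a=2}^{n+1} R(n,1)\tau_n\cdots\tau_a$. The cokernel is therefore canonically the missing $a=1$ summand $R(n,1)\tau_n\cdots\tau_1$, which is isomorphic to $R(n)\otimes\cori[x_{n+1}]$ via $r\otimes f\mapsto rf(x_{n+1})\tau_n\cdots\tau_1$; composing the projection with the inverse of this isomorphism gives $\varphi_1$ as in \eqref{eq:phi-1}. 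The right-handed versions of the two decompositions produce the alternative identification of the cokernel with $\cori[x_1]\otimes R^1(n)$ and yield $\varphi_2$.

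Next I would verify the three congruences $x_k\tau_n\cdots\tau_1\equiv\tau_n\cdots\tau_1 x_{k+1}$ ($1\le k\le n$), $\tau_l\tau_n\cdots\tau_1\equiv\tau_n\cdots\tau_1\tau_{l+1}$ ($1\le l\le n-1$), and $x_{n+1}\tau_n\cdots\tau_1\equiv\tau_n\cdots\tau_1 x_1$, all modulo $R(n)R^1(n)$, by commuting the generator through $\tau_n\cdots\tau_1$ one $\tau_j$ at a time using \eqref{eq:KLR} and \eqref{eq:tau3}. Each step produces correction terms --- linear ones from the $(\tau_k x_l)$-relation, or cubic ones involving $\overline{Q}_{j,j+1,j+2}$ from the braid relation --- which, when re-expressed in the basis of Proposition \ref{prop:R(n+1)}, land in $\soplus_{a\ge 2} R(n,1)\tau_n\cdots\tau_a = R(n)R^1(n)$. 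These congruences simultaneously force $\varphi_1=\varphi_2$, show that the two algebra structures on the cokernel match (justifying the notation $R(n)\otimes\cor[t]$), yield that $\varphi$ intertwines the right $R(n)$-action via $\xi_n$ with the standard right action on the first tensor factor, and imply that both left multiplication by $x_{n+1}$ and right multiplication by $x_1$ on $R(n+1)$ descend to multiplication by $t$ on the cokernel. The expected main obstacle is precisely this bookkeeping: for the cubic correction in particular, one must carefully transport the polynomial factor $\overline{Q}_{j,j+1,j+2}$ past the remaining $\tau$'s and verify that the result has no component in the $a=1$ summand --- a routine length induction enabled by the PBW-type bases.
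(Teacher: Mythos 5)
Your proposal follows the paper's own route essentially step for step: read the exact sequence off the decompositions in Propositions~\ref{prop:R(n+1)} and~\ref{prop:R^{1}}, identify the cokernel with the missing $a=1$ summand to get $\varphi_1$ (and its mirror $\varphi_2$), and then reduce everything — $\varphi_1=\varphi_2$, bilinearity, and the $x_{n+1}$/$x_1\leftrightarrow t$ compatibility — to the three congruences modulo $R(n)R^1(n)$ that the paper displays just before the corollary. The only difference is one of emphasis: the paper asserts those congruences and the student proposes to verify them by pushing each generator through $\tau_n\cdots\tau_1$, correctly flagging that the correction terms (linear or $\overline{Q}$-type) must be seen to lie in $\soplus_{a\ge 2} R(n,1)\tau_n\cdots\tau_a=R(n)R^1(n)$; this is exactly the bookkeeping the paper leaves implicit.
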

\begin{proof}
By the construction of $\varphi$, our assertions follow
immediately.
\end{proof}

\vskip 3mm

For each $i \in I$, we define the functor
$$\bF_{i} \cl\Mod(R(\beta)) \longrightarrow
\Mod(R(\beta+\alpha_i))$$ by
\begin{equation} \label{eq:E_ibar}
\bF_{i}(M) =R(\alpha_i)\circ M=R(\beta+\alpha_i) e(i, \beta)
\otimes_{R(\beta)} M,
\end{equation}
where the right $R(\beta)$-module structure on $R(\beta+\alpha_i)
e(i, \beta)$ is given by the embedding
$$R(\beta) \overset{\sim} \longrightarrow  R^{1}(\beta) \hookrightarrow R(\beta
+ \alpha_i).$$

\begin{Thm}\label{thm:bF}\hfill
\bna
\item There exists a natural isomorphism
$$ \bF_{j} E_{i} \overset{\sim} \longrightarrow E_{i}
\bF_{j} \ \ \text{for} \ i \neq j.$$
\item We have an exact sequence
in $\Mod(R(\beta))${\rm:}
$$0 \rightarrow \bF_{i} E_{i}M \longrightarrow E_{i}\bF_{i}M
\longrightarrow q^{-(\alpha_i | \beta)} M\otimes \cor[t_i]
\rightarrow 0$$ which is functorial in $M\in\Mod(R(\beta))$.  Here
$t_i$ is an indeterminate of degree $(\alpha_i | \alpha_i)$.
\end{enumerate}
\end{Thm}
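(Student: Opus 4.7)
The plan is to analyze the bimodule kernels of the composite functors using the exact sequence from Corollary~\ref{cor:varphi-1}, applied at an appropriate level, together with multiplication by suitable idempotents. Both parts follow the same strategy; only the level of the exact sequence and the choice of idempotents differ. In all cases the middle term of the sequence will carry the kernel of $E_i\bF_\bullet$, the first term (the kernel of $\varphi$) will carry the kernel of $\bF_\bullet E_i$, and the third term will either vanish or realize the cokernel.

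For part~(a) I would apply Corollary~\ref{cor:varphi-1} at level $n+2$ (i.e.\ with $n$ replaced by $n+1$), obtaining the exact sequence of $(R(n+1),R(n+1))$-bimodules
\begin{equation*}
0 \to R(n+1) \otimes_{R^1(n)} R^1(n+1) \to R(n+2) \to R(n+1) \otimes \cor[t] \to 0,
\end{equation*}
and then multiply throughout by $e(\beta+\alpha_j, i)$ on the left and $e(j, \beta+\alpha_i)$ on the right. The middle term becomes $e(\beta+\alpha_j, i) R(n+2) e(j, \beta+\alpha_i)$, which is exactly the bimodule kernel of $E_i \bF_j$. Multiplying the first term by these idempotents and using the map $a \otimes b \mapsto a \cdot \xi_{n+1}(b)$ identifies it with $R(\beta+\alpha_j) e(j, \beta) \otimes_{R(\beta)} e(\beta, i) R(\beta+\alpha_i)$, the bimodule kernel of $\bF_j E_i$. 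For the third term, $\varphi$ factors through $R(n+1, 1) \tau_{n+1}\cdots\tau_1$; applying the cyclic rotation formula $\tau_{n+1}\cdots\tau_1 e(j, \beta+\alpha_i) = e(\beta+\alpha_i, j)\tau_{n+1}\cdots\tau_1$, the restricted third term equals $e(\beta+\alpha_j, i)\cdot e(\beta+\alpha_i, j)\cdot\tau_{n+1}\cdots\tau_1$, which vanishes for $i \neq j$ because the two idempotents impose contradictory labels at the last position. Hence the restricted sequence collapses to the desired isomorphism.

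For part~(b), the same procedure at level $n+1$ with idempotents $e(\beta, i)$ on the left and $e(i, \beta)$ on the right produces
\begin{equation*}
0 \to e(\beta, i)\bigl[R(n)\otimes_{R^1(n-1)}R^1(n)\bigr]e(i,\beta) \to e(\beta,i)R(n+1)e(i,\beta) \to e(\beta,i)\bigl[R(n)\otimes\cor[t]\bigr]e(i,\beta) \to 0.
\end{equation*}
The middle term is the bimodule kernel of $E_i \bF_i$; the first term is identified with the kernel of $\bF_i E_i$ via the map $a \otimes b \mapsto a \xi_n(b)$. The rotation identity $\tau_n\cdots\tau_1 e(i,\beta) = e(\beta, i)\tau_n\cdots\tau_1$ together with the direct degree computation $\deg\bl\tau_n\cdots\tau_1 e(i,\beta)\br = -\sum_{k=2}^{n+1}(\alpha_i|\alpha_{\nu_k}) = -(\alpha_i|\beta)$ show that the third term is isomorphic as an $(R(\beta), R(\beta))$-bimodule to $q^{-(\alpha_i|\beta)} R(\beta) \otimes \cor[t_i]$, where $t_i$ corresponds to $x_{n+1}e(i)$ and has degree $(\alpha_i|\alpha_i)$. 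Tensoring this restricted exact sequence with $M$ over $R(\beta)$ (which preserves exactness since each term is flat on the right) then yields the required exact sequence in $\Mod(R(\beta))$.

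The principal technical obstacle is the careful identification of the first term of the restricted exact sequence with the kernel of $\bF_j E_i$ (respectively $\bF_i E_i$). This requires tracking how the idempotents $e(\cdot, i)$ on the left and $e(j, \cdot)$ on the right decompose across the tensor product $R \otimes_{R^1} R^1$, remembering that $\xi$ shifts position labels up by one and that the tensor base $R^1(\beta-\alpha_i)$ is identified with $R(\beta-\alpha_i)$ via $\xi$. A secondary care in part~(b) is to verify that the degree shift $q^{-(\alpha_i|\beta)}$ is correctly produced by the chain $\tau_n\cdots\tau_1$ acting on $e(i,\beta)$.
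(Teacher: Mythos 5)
Your proposal is correct and takes essentially the same approach as the paper's (terse) proof: apply the exact sequence of Corollary~\ref{cor:varphi-1} at the appropriate level, multiply by the idempotents singling out the relevant weight components, identify the sub and quotient bimodules with the kernels of $\bF_\bullet E_i$ and the grade-shifted $\Id\otimes\cor[t_i]$, and observe that for $i\neq j$ the cokernel term $R(\,\cdot\,,1)\tau_{\bullet}\cdots\tau_1$ is annihilated by the idempotents since they impose contradictory labels at the last position after the cyclic rotation. Your degree computation $-(\alpha_i\mid\beta)$ and the remark that exactness survives $\otimes_{R(\beta)}M$ because the cokernel is right-flat are both correct and are precisely what the paper's ``immediate consequence'' elides.
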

\begin{proof}
The first assertion follows from
$$e(\beta,i)\bl R(n,1) \tau_{n} \cdots \tau_{1} \br
e(j,\beta)=0 \quad\text{for $i\not=j$ (see \eqref{eq:phi-1}).}$$
(b) is an immediate consequence of Corollary \ref{cor:varphi-1}.
\end{proof}

\vskip 3em


\section{The cyclotomic Khovanov-Lauda-Rouquier algebras} \label{sec:RLambda}

Let $\Lambda \in P^{+}$ be a dominant integral weight. In this
section, we study the structure of the cyclotomic
Khovanov-Lauda-Rouquier algebra $R^{\Lambda}$ and the functors
$E_{i}^{\Lambda}$, $F_{i}^{\Lambda}$ defined on the category of
$R^{\Lambda}$-modules.

\subsection{Definition of cyclotomic \KLRs}

For $\Lambda\in P^+$ and $i\in I$, choose a monic polynomial of
degree $\langle h_{i},\Lambda \rangle$ \eq
a_i^\Lambda(u)=\sum_{k=0}^{\langle h_{i},\Lambda \rangle}
c_{i;k}u^{\langle h_{i},\Lambda \rangle-k} \eneq with $c_{i;k}\in
\cor_{k(\alpha_i|\alpha_i)}$ and $c_{i;0}=1$.

For $1\le k\le n$, we define
\begin{equation}
\x[k]= \sum_{\nu \in I^{n}} a_{\nu_k}^\Lambda(x_k) e(\nu)\in R(n).
\end{equation}
Hence $\x[k]e(\nu)$ is a homogeneous element of $R(n)$ with degree
$2(\alpha_{\nu_k}|\Lambda)$.

\begin{Def} \label{def:RLambda}
The {\em cyclotomic Khovanov-Lauda-Rouquier
algebra $R^{\Lambda}(\beta)$ at $\beta$} is defined to be the
quotient algebra
$$R^{\Lambda}(\beta) = \dfrac{R(\beta)}  {R(\beta)\x R(\beta)}.$$
Here, we understand $R^\Lambda(\beta)=\cor$ for $\beta=0$.
\end{Def}

For each $n\ge0$, we also define
\begin{equation} \label{eq:x(n)}
R^{\Lambda}(n) = \dfrac{R(n)} {R(n)
\x R(n)}\cong\soplus_{|\beta|=n}R^\Lambda(\beta).
\end{equation}
Then we may write $$R^{\Lambda}(\beta) = R^{\Lambda}(n)e(\beta),
\quad \text{where}\quad e(\beta) = \sum_{\nu \in I^{\beta}}
e(\nu).$$

Now we will prove that $R^\Lambda(n)$ is a finitely generated
$\cor$-module (cf.\ \cite{KL09, BK09}).
\begin{Lem}\label{lem:van}
Let $M$ be an $R(n)$-module, $f\in \cor[x_1,\ldots,x_n]$ and
$\nu\in I^n$ such that $\nu_{n-1}=\nu_n$. Then $fe(\nu)M=0$ implies
$(\partial_{n-1}f)e(\nu)M=0$ and $(s_{n-1}f)e(\nu)M=0$.
\end{Lem}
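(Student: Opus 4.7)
\medskip
\noindent\textbf{Proof proposal.}
The plan is to first establish the $\partial_{n-1}f$ vanishing and then obtain the $s_{n-1}f$ vanishing as a formal consequence. The key ingredients will be the commutation relation
\[
\tau_{n-1}g - (s_{n-1}g)\tau_{n-1} = (\partial_{n-1}g)\,e_{n-1,n}
\]
from \eqref{eq:partial}, valid for any polynomial $g$ in the $x_k$'s, together with the crucial identity
\[
\tau_{n-1}^{2}e(\nu) = Q_{\nu_{n-1},\nu_{n}}(x_{n-1},x_{n})\,e(\nu) = Q_{i,i}(x_{n-1},x_{n})\,e(\nu) = 0,
\]
which uses only the hypothesis $\nu_{n-1}=\nu_{n}$ (call this common value $i$) together with $Q_{i,i}=0$ from \eqref{eq:Q}. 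I will also use the polynomial identity $s_{n-1}f = f + (x_{n-1}-x_{n})\,\partial_{n-1}f$, which follows directly from the definition of $\partial_{n-1}$.

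\medskip
\noindent\emph{Step 1 (the main computation).} Given any $m\in M$, multiply the relation $fe(\nu)m=0$ on the left by $\tau_{n-1}$ and use the commutator formula with $e(\nu)\tau_{n-1}=\tau_{n-1}e(\nu)$ (valid since $s_{n-1}\nu=\nu$) to obtain
\[
0 \;=\; \tau_{n-1}f e(\nu)m \;=\; (s_{n-1}f)\,e(\nu)\,\tau_{n-1}m + (\partial_{n-1}f)\,e(\nu)m.
\]
Since $fe(\nu)\tau_{n-1}m = f\,e(\nu)\tau_{n-1}m = 0$ by the hypothesis (applied to the element $\tau_{n-1}m\in M$), the expansion $s_{n-1}f = f + (x_{n-1}-x_{n})\partial_{n-1}f$ yields
\[
(s_{n-1}f)\,e(\nu)\,\tau_{n-1}m \;=\; (x_{n-1}-x_{n})(\partial_{n-1}f)\,e(\nu)\,\tau_{n-1}m,
\]
and hence
\[
(\partial_{n-1}f)\,e(\nu)m \;=\; -(x_{n-1}-x_{n})(\partial_{n-1}f)\,e(\nu)\,\tau_{n-1}m. \tag{$\ast$}
\]

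\medskip
\noindent\emph{Step 2 (iteration via $\tau_{n-1}^{2}e(\nu)=0$).} I now substitute $\tau_{n-1}m$ in place of $m$ in $(\ast)$. The right-hand side then contains $e(\nu)\tau_{n-1}^{2}m = \tau_{n-1}^{2}e(\nu)m = 0$, so
\[
(\partial_{n-1}f)\,e(\nu)\,\tau_{n-1}m \;=\; 0.
\]
Plugging this back into $(\ast)$ gives $(\partial_{n-1}f)\,e(\nu)m = 0$. Since $m\in M$ was arbitrary, $(\partial_{n-1}f)\,e(\nu)M=0$.

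\medskip
\noindent\emph{Step 3 (deducing $(s_{n-1}f)e(\nu)M=0$).} The identity $s_{n-1}f = f + (x_{n-1}-x_{n})\,\partial_{n-1}f$ combined with the assumption $fe(\nu)M=0$ and the conclusion of Step~2 immediately yields $(s_{n-1}f)e(\nu)M = 0$.

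\medskip
The main obstacle is recognizing that a single pass of the commutator relation produces a self-referential identity ($\ast$) which becomes vacuous once one applies it to $\tau_{n-1}m$ and invokes $\tau_{n-1}^{2}e(\nu)=0$; everything else is formal. No induction on $\deg f$ or on $|\beta|$ is needed, and the argument works uniformly over the graded base ring $\cor$.
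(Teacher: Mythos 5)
Your proof is correct and rests on exactly the same three ingredients as the paper's argument: the commutator relation $\tau_{n-1}f=(s_{n-1}f)\tau_{n-1}+(\partial_{n-1}f)e_{n-1,n}$, the vanishing $\tau_{n-1}^{2}e(\nu)=Q_{ii}(x_{n-1},x_n)e(\nu)=0$, and the identity $(x_{n-1}-x_n)\partial_{n-1}f=s_{n-1}f-f$. The only difference is organizational: the paper packages the computation as one chain of equalities $(x_{n-1}-x_n)\tau_{n-1}f\tau_{n-1}e(\nu)=\cdots=(\tau_{n-1}f-\partial_{n-1}f-f\tau_{n-1})e(\nu)$ and evaluates both ends on $m$, whereas you derive the self-referential relation $(\ast)$ on $M$ and then substitute $\tau_{n-1}m$ to invoke $\tau_{n-1}^{2}e(\nu)=0$ — the same content, re-ordered.
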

\Proof
The equality
\eqn
(x_{n-1}-x_{n})\tau_{n-1}f\tau_{n-1}e(\nu)
&=&(x_{n-1}-x_n)\bigl((s_{n-1}f)\tau_{n-1}
+\partial_{n-1} f(x)\bigr)\tau_{n-1}e(\nu)\\
&=&(s_{n-1}f-f)\tau_{n-1}e(\nu)
=(\tau_{n-1}f-\partial_{n-1}f-f\tau_{n-1})e(\nu) \eneqn implies
$(\partial_{n-1}f)e(\nu)M=0$. The last equality follows from
$$(x_{n-1}-x_n)\partial_{n-1}f=s_{n-1}f-f.$$ \QED

\begin{Lem}\label{lem:integrable}
Let $\beta\in Q^+$ with $|\beta|=n$ and $i\in I$.
\bna
\item There exists a monic polynomial $g(u)$ with coefficients in $\cor$
such that $g(x_a)=0$ in $R^\Lambda(\beta)$
for any $a$ $(1\le a\le n)$.
\item There exists $m$ such that
$R^\Lambda(\beta+k\alpha_i)=0$ for any $k\ge m$.
\ee
\end{Lem}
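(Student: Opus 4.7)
The plan is to prove (a) by induction on the position $a\in\{1,\ldots,n\}$, and then derive (b) from (a) combined with a pigeonhole argument and the vanishing of cyclotomic nilHecke algebras.

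For (a), I will construct, for each $a$, a monic $g_a\in\cor[u]$ with $g_a(x_a)e(\nu)=0$ for every $\nu\in I^\beta$; the polynomial $g=g_1\cdots g_n$ will then satisfy $g(x_a)=0$ for all $a$. The base case $a=1$ follows immediately from the defining cyclotomic relation $\sum_\nu a_{\nu_1}^\Lambda(x_1)e(\nu)=0$, so $g_1(u)=\prod_{i\in I}a_i^\Lambda(u)$ works. For the inductive step, assuming $g_a(x_a)e(\nu)=0$ for all $\nu$, I split on whether $\nu_a=\nu_{a+1}$. If they agree, the same local argument as in Lemma~\ref{lem:van}, applied at positions $(a,a+1)$ rather than $(n-1,n)$, produces $g_a(x_{a+1})e(\nu)=0$. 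If they differ, I start from $g_a(x_a)e(s_a\nu)=0$ and left-multiply by $\tau_a$; the correction term $(\partial_a g_a)e_{a,a+1}$ vanishes because $(s_a\nu)_a\neq(s_a\nu)_{a+1}$, giving $g_a(x_{a+1})e(\nu)\tau_a=0$. Right-multiplying by $\tau_a$ and using $\tau_a^2 e(\nu)=Q_{\nu_a,\nu_{a+1}}(x_a,x_{a+1})e(\nu)$ then yields
\[
g_a(x_{a+1})\,Q_{\nu_a,\nu_{a+1}}(x_a,x_{a+1})\,e(\nu)=0.
\]
The leading coefficient in $x_{a+1}$ of $Q_{\nu_a,\nu_{a+1}}$ is a unit $t_{\nu_{a+1},\nu_a;-a_{\nu_{a+1},\nu_a},0}\in\cor_0^\times$, so this is, up to that unit, monic in $x_{a+1}$ with coefficients in $\cor[x_a]$. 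Combined with the monic relation $g_a(x_a)e(\nu)=0$, the $\cor$-submodule of $e(\nu)R^\Lambda(\beta)$ generated by the finitely many monomials $x_a^i x_{a+1}^j e(\nu)$ with $i<\deg g_a$ and $j<\deg g_a - a_{\nu_{a+1},\nu_a}$ is stable under multiplication by $x_{a+1}$; Cayley--Hamilton then produces a monic $h_\nu\in\cor[u]$ with $h_\nu(x_{a+1})e(\nu)=0$. Taking $g_{a+1}$ to be the product of these $h_\nu$ over the finite set $I^\beta$ completes the induction.

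For (b), fix $i\in I$ and set $N=\langle h_i,\Lambda\rangle$. For any $\nu\in I^{\beta+k\alpha_i}$, the $\beta_i+k$ coordinates equal to $i$ are separated by the $|\beta|-\beta_i$ non-$i$ coordinates into at most $|\beta|-\beta_i+1$ consecutive runs; by pigeonhole, for $k$ sufficiently large every $\nu$ contains an $i$-run of length $M>N$, say at positions $a+1,\ldots,a+M$. To show $e(\nu)=0$ in $R^\Lambda(\beta+k\alpha_i)$, I will combine part~(a) with iterated use of Lemma~\ref{lem:van} within the $i$-run (where $\nu_r=\nu_{r+1}=i$) to propagate a cyclotomic-type annihilation $a_i^\Lambda(x_r)e(\nu)=0$ to each $r\in\{a+1,\ldots,a+M\}$. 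Since $Q_{ii}=0$, the subalgebra generated by $x_{a+1},\ldots,x_{a+M}$ together with $\tau_{a+1},\ldots,\tau_{a+M-1}$ acting on $e(\nu)R^\Lambda(\beta+k\alpha_i)$ is a quotient of the rank-$M$ cyclotomic nilHecke algebra with cyclotomic polynomial $a_i^\Lambda$. The standard vanishing result for such algebras when $M>\deg a_i^\Lambda=N$ (see \cite{KL09}) then forces $e(\nu)=0$, and summing $1=\sum_\nu e(\nu)$ yields $R^\Lambda(\beta+k\alpha_i)=0$.

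The main obstacle I anticipate is in (b), namely transferring a cyclotomic-style relation (involving the degree-$N$ polynomial $a_i^\Lambda$, rather than the potentially much larger monic polynomial produced by (a)) to positions inside the $i$-run when the run begins in the interior. When the run starts at position $1$, Lemma~\ref{lem:van} propagates $a_i^\Lambda(x_1)=0$ along the run immediately; in general one must either commute delicately through the preceding non-$i$ block or combine (a) with a more refined nilHecke-type argument that reduces the degree of the annihilator at an interior position down to $N$.
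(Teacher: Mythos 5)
Your proof of part~(a) is essentially the paper's argument: induction on the position $a$, splitting on $\nu_a=\nu_{a+1}$ versus $\nu_a\neq\nu_{a+1}$, using Lemma~\ref{lem:van} in the first case and multiplying $g(x_a)$ by $\tau_a$ on both sides in the second case to obtain $g(x_{a+1})Q_{\nu_a,\nu_{a+1}}(x_a,x_{a+1})e(\nu)M=0$, then extracting a monic polynomial in $x_{a+1}$ from the ideal $\bigl(g(x_a),\,g(x_{a+1})Q_{\nu_a,\nu_{a+1}}(x_a,x_{a+1})\bigr)$. The Cayley--Hamilton phrasing is a cosmetic variant of what the paper does.

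Your proof of part~(b) takes a genuinely different route from the paper --- pigeonhole on $i$-runs plus a cyclotomic nilHecke vanishing --- but as written it has a concrete gap, which you yourself flag without resolving. The nilHecke vanishing argument needs the \emph{degree-$\langle h_i,\Lambda\rangle$} relation $a_i^\Lambda(x_{a+1})e(\nu)=0$ at the start of an interior $i$-run, but this is not a relation in $R^\Lambda(\beta+k\alpha_i)$ when $a\ge 1$: only $\x$ is killed, i.e. only $a_{\nu_1}^\Lambda(x_1)e(\nu)=0$ holds. Part~(a) gives a monic polynomial annihilating $x_{a+1}e(\nu)$, but one of potentially much larger degree, and Lemma~\ref{lem:van} merely propagates \emph{that} polynomial along the run (via $s_r$, preserving degree, or $\partial_r$, decreasing it); it does not reduce it to $a_i^\Lambda$. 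So the claimed surjection from a rank-$M$ cyclotomic nilHecke algebra with cyclotomic polynomial $a_i^\Lambda$ onto the run subalgebra is unjustified, and with it the vanishing conclusion.

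The route can in fact be salvaged, but not in the direction you suggest: one does not need to recover degree $\langle h_i,\Lambda\rangle$. What one needs is a bound $D$, independent of $k$ and of $\nu$, on the degree of the annihilator from~(a) at each position. This follows because your inductive step in~(a) only increases the degree at a \emph{color change} $\nu_a\neq\nu_{a+1}$, whereas inside an $i$-run the degree is non-increasing (Lemma~\ref{lem:van}). Since $\nu\in I^{\beta+k\alpha_i}$ has a fixed number $|\beta|-\beta_i$ of non-$i$ entries, the number of color changes up to any position is bounded by $2(|\beta|-\beta_i)$, hence the degree at any position is bounded by some $D=D(\Lambda,\beta,i)$. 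Then for $k$ large the pigeonhole gives an $i$-run of length $>D$, and iterating $\partial$ along the run (Lemma~\ref{lem:van}) yields $\pm e(\nu)=0$ directly, with no appeal to nilHecke algebras at all. The paper avoids even this bookkeeping by a different organization: a double induction where the auxiliary statement~\eqref{eq:30} works only with a \emph{trailing} $i$-run and a polynomial at position $n$, so the iterated $\partial_{n+m-1}\cdots\partial_n g(x_n)=\pm1$ argument never has to cross a non-$i$ position, and the general case~\eqref{eq:25} is reduced to it by induction on $n$ and a dichotomy on $\Supp_i(\nu_{\le n-1+k})$.
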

\Proof
(a) By induction on $a$, it is enough to show that
\eq &&\parbox{\my}{ for an $R(n)$-module $M$ and a
monic polynomial $g(u) \in \cor[u]$ such that $g(x_a)M=0$,
there exists a monic polynomial $h(u) \in \cor[u]$ such that
$h(x_{a+1})M=0$.}
\label{eq:x12} \eneq

\smallskip
We shall prove that, for any $\nu\in I^n$, we can find a monic
polynomial $h(u) \in \cor[u]$ such that $h(x_{a+1})e(\nu)M=0$.

\noindent
(i)\ If $\nu_a\not=\nu_{a+1}$, then we have
$$g(x_{a+1})Q_{\nu_a,\nu_{a+1}}(x_a,x_{a+1})e(\nu)M=g(x_{a+1})\tau_a^2e(\nu)M=
\tau_ag(x_a)\tau_ae(\nu)M=0.$$
Since
$Q_{\nu_a,\nu_{a+1}}(x_a,x_{a+1})$ is a monic polynomial in $x_{a+1}$
with coefficients in $\cor[x_a]$
(up to an invertible constant multiple),
there exists a monic polynomial $h(x_{a+1})$ such that
$$h(x_{a+1})\in
\cor[x_a,x_{a+1}]g(x_a)
+\cor[x_a,x_{a+1}]g(x_{a+1})Q_{\nu_a,\nu_{a+1}}(x_a,x_{a+1}).$$
Then $h(x_{a+1})e(\nu)M=0$.

\smallskip
\noindent
(ii)\ The case $\nu_a=\nu_{a+1}$ immediately follows from Lemma~\ref{lem:van}.

\medskip
\noindent
(b)\ For $\nu\in I^n$, we set
$\Supp_i(\nu)\seteq\#\set{k}{\text{$1\le k\le n$ and $\nu_k=i$}}$,
and $\nu_{\le m}\seteq(\nu_1,\ldots,\nu_m)$,
$\nu_{\ge m}\seteq(\nu_{m},\ldots,\nu_n)$ for $1\le m\le n$.

 In order to see (b), it is enough to show
\eq
&\parbox{70ex}{for a given $\Lambda$ and $n$, there exists $m>0$
such that $e(\nu)R^\Lambda(n+m)=0$
for any $\nu\in I^{n+m}$ with $\Supp_i(\nu)\ge m$.}
\label{eq:25}
\eneq

In order to prove this we first show the following:
\eq
&&\parbox{70ex}{for a given $\Lambda$ and $n$, there exists $m>0$
such that $e(\nu)R^\Lambda(n+m)=0$
for any $\nu\in I^{n+m}$ with $\nu_a=i$ ($n<a\le n+m$).}
\label{eq:30}
\eneq
By (a), there exists a monic polynomial $g(u)$ of degree $m\ge0$ such that
$g(x_n)R^\Lambda(n)=0$.
Hence $g(x_n)R^\Lambda(n+k)=0$ for any $k\ge0$.
The repeated application of Lemma~\ref{lem:van}
implies
$$\bl\partial_{n+m-1}\cdots\partial_{n}g(x_n)\br e(\nu)R^\Lambda(n+k)=0
\quad\text{for $k\ge m$.}$$
 Since $\partial_{n+m-1}\cdots\partial_{n}g(x_n)=\pm1$, we
obtain $e(\nu)R^\Lambda(n+k)=0$ for $k\ge m$. Thus we obtain \eqref{eq:30}.

\smallskip
Now we shall show \eqref{eq:25} by induction on $n$.
If $n=0$, then  \eqref{eq:25} immediately follows from \eqref{eq:30}.
Assume $n>0$. By the induction hypothesis, there exists $k>0$ such that
\eq
&&\text{$e(\nu)R^\Lambda(n-1+k)=0$
for any $\nu\in I^{n-1+k}$ with $\Supp_i(\nu)\ge k$.}
\label{eq:31}
\eneq
Applying \eqref{eq:30} to $n-1+k$, there exists $m$
such that
\eq
&&\parbox{75ex}{$e(\nu)R^\Lambda(n-1+k+m)=0$
if  $\nu\in I^{n-1+k+m}$  satisfies $\nu_a=i$
for any $a$ such that $n-1+k<a\le n-1+k+m$.}
\label{eq:32}
\eneq
Now let $\nu\in  I^{n-1+k+m}$ such that $\Supp_i(\nu)\ge -1+k+m$.
We shall show $e(\nu)R^\Lambda(n-1+k+m)=0$.
If $\Supp_i(\nu_{\le n-1+k})\ge k$, then \eqref{eq:31} implies
$e(\nu)R^\Lambda(n-1+k+m)=0$. If $\Supp_i(\nu_{\le n-1+k})\le k-1$,
then
$\Supp_i(\nu_{\ge n+k})=\Supp_i(\nu)-\Supp_i(\nu_{\le n-1+k})
\ge(-1+k+m)-(k-1)=m$, which implies that
$\nu_a=i$ for $n+k\le a\le n-1+k+m$.
Hence \eqref{eq:32} implies $e(\nu)R^\Lambda(n-1+k+m)=0$.
\QED

\begin{Cor}\label{lem:cyclofinite}
For any $\beta\in Q^+$, $R^\Lambda(\beta)$ is a finitely generated
$\cor$-module.
\end{Cor}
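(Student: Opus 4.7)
The plan is to combine a PBW-type basis result for $R(n)$ with Lemma~\ref{lem:integrable}(a). By iterating Proposition~\ref{prop:R(n+1)} (i.e., applying induction on $n$), one sees that $R(n)$ is a free right module over the commutative subring $\soplus_{\nu\in I^n}\cor[x_1,\dots,x_n]e(\nu)$ with basis $\{\tau_w:w\in S_n\}$, where a reduced expression is fixed for each $w$. In particular, $R(\beta)=R(n)e(\beta)$ (with $n=|\beta|$) is spanned over $\cor$ by the countable set
\[
\set{\tau_w\cdot x_1^{a_1}\cdots x_n^{a_n}\cdot e(\nu)}{w\in S_n,\ (a_1,\dots,a_n)\in \Z_{\ge 0}^n,\ \nu\in I^\beta}.
\]

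Next, I invoke Lemma~\ref{lem:integrable}(a): there is a single monic polynomial $g(u)\in\cor[u]$ such that $g(x_a)=0$ in $R^{\Lambda}(\beta)$ for every $a$ with $1\le a\le n$. Set $d\seteq\deg g$. Because $g$ is monic, any power $x_a^k$ with $k\ge d$ can be rewritten in $R^{\Lambda}(\beta)$ as a $\cor$-linear combination of $1,x_a,\dots,x_a^{d-1}$. Since the $x_a$ pairwise commute, the image of $\cor[x_1,\dots,x_n]$ in $R^{\Lambda}(\beta)$ is spanned over $\cor$ by the finite family $\{x_1^{a_1}\cdots x_n^{a_n}: 0\le a_i<d\}$.

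Combining these two facts, $R^{\Lambda}(\beta)$ is spanned as a $\cor$-module by the finite set
\[
\set{\tau_w\cdot x_1^{a_1}\cdots x_n^{a_n}\cdot e(\nu)}{w\in S_n,\ 0\le a_i<d,\ \nu\in I^\beta},
\]
which has cardinality at most $n!\cdot d^n\cdot |I^\beta|<\infty$. This proves that $R^{\Lambda}(\beta)$ is a finitely generated $\cor$-module.

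There is essentially no serious obstacle here: the statement is a direct corollary of Lemma~\ref{lem:integrable}(a), modulo the structural fact that $R(n)$ admits a PBW-type spanning set indexed by $S_n$ and monomials in the $x_k$'s, which itself is a straightforward iteration of Proposition~\ref{prop:R(n+1)}. The only point requiring care is to ensure that all the $x_a$ can simultaneously be reduced modulo monic relations; this is automatic because Lemma~\ref{lem:integrable}(a) provides a single $g$ working for every $a$ at once.
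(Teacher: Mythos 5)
Your proof is correct and follows the argument the paper leaves implicit: the Corollary is stated in the paper immediately after Lemma~\ref{lem:integrable} without a written proof, and the intended reasoning is exactly what you carry out --- combine the PBW-type spanning set of $R(\beta)$ (obtained by iterating Proposition~\ref{prop:R(n+1)}, or quoted directly from \cite[Theorem~2.5]{KL09}) with the single monic relation $g(x_a)=0$ from Lemma~\ref{lem:integrable}(a) to cap all polynomial degrees by $d=\deg g$.
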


\subsection{Exactness of $F_i^\Lambda$}
For each $i\in I$, we define the functors
\begin{equation*}
\begin{aligned}
& E_{i}^{\Lambda}\cl \Mod(R^{\Lambda}(\beta+\alpha_i)) \longrightarrow \Mod(R^{\Lambda}(\beta)), \\
& F_{i}^{\Lambda}\cl \Mod(R^{\Lambda}(\beta)) \longrightarrow
\Mod(R^{\Lambda}(\beta+ \alpha_i))
\end{aligned}
\end{equation*}
by
\begin{equation} \label{eq:E_iLambda}
\begin{aligned}
& E_{i}^{\Lambda}(N) = e(\beta, i) N = e(\beta, i)R^{\Lambda}(\beta+
\alpha_i) \otimes_{R^{\Lambda}(\beta+\alpha_i)} N, \\
& F_{i}^{\Lambda}(M) = R^{\Lambda}(\beta+\alpha_i) e(\beta, i)
\otimes_{R^{\Lambda}(\beta)} M,
\end{aligned}
\end{equation}
where $M \in \Mod(R^{\Lambda}(\beta))$, $N \in
\Mod(R^{\Lambda}(\beta+\alpha_i))$.

The purpose of this section is to prove
the following theorem.

\begin{Thm}\label{th:proj}
The module $R^{\Lambda}(\beta+\alpha_i)e(\beta, i)$
is a projective right $R^\Lambda(\beta)$-module.
Similarly, $e(\beta, i)R^{\Lambda}(\beta+\alpha_i)$
is a projective left $R^\Lambda(\beta)$-module.
 \end{Thm}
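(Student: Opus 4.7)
The aim is to show that $K\seteq R^\Lambda(\beta+\alpha_i)e(\beta,i)$ is projective as a right $R^\Lambda(\beta)$-module; the statement for the left module $e(\beta,i)R^\Lambda(\beta+\alpha_i)$ will then follow by applying the anti-involution $\psi$ of $R$, which fixes the generators $e(\nu),x_k,\tau_l$ and hence descends to the cyclotomic quotient. My plan is to derive projectivity from the fundamental exact sequence \eqref{exact:main}, which is established independently as Theorem~\ref{th:main}: for every $M\in\Mod(R^\Lambda(\beta))$,
$$0\to \bF_i M\to F_i M\to F_i^\Lambda M\to 0.$$

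Specializing this sequence at $M=R^\Lambda(\beta)$ (the right regular module) yields a short exact sequence of right $R^\Lambda(\beta)$-modules
$$0\to R(\beta+\alpha_i)e(i,\beta)\otimes_{R^1(\beta)}R^\Lambda(\beta) \to R(\beta+\alpha_i)e(\beta,i)\otimes_{R(\beta)}R^\Lambda(\beta)\to K\to 0.$$
By Proposition~\ref{prop:R(n+1)}, the middle term is a free right $R^\Lambda(\beta)$-module of rank $n+1$ with basis $\tau_n\cdots\tau_a e(\beta,i)$ for $a=1,\ldots,n+1$, and by the symmetric decomposition of Proposition~\ref{prop:R^{1}} the left term is similarly free. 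Thus $K$ is presented as the cokernel of a morphism between finitely generated free right $R^\Lambda(\beta)$-modules, in particular finitely generated and of projective dimension at most one.

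The crucial remaining step is to split this presentation. I would extract the splitting datum from the bimodule map $\varphi\cl R(n+1)\to R(n)\otimes\cor[t]$ of Corollary~\ref{cor:varphi-1}: restricted to the idempotent $e(\beta,i)$ and pushed through the cyclotomic quotient on both sides, $\varphi$ should furnish a right-$R^\Lambda(\beta)$-linear retraction exhibiting $K$ as a direct summand of $F_iR^\Lambda(\beta)$, and hence as a projective right $R^\Lambda(\beta)$-module. The main obstacle is to verify that $\varphi$ is compatible with the two cyclotomic ideals at play, namely the left ideal generated by $a^\Lambda(x_1)\in R(\beta+\alpha_i)$ and the right ideal $R(\beta)a^\Lambda(x_1)R(\beta)\subset R(\beta)$. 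Because $a^\Lambda(x_1)$ involves only the leftmost variable while Proposition~\ref{prop:R(n+1)} isolates the rightmost, this compatibility is plausible but delicate; I expect it to be proved by induction on $|\beta|$, with the categorification of the commutator identity \eqref{eq:ei} supplying the inductive step and Corollary~\ref{lem:cyclofinite} guaranteeing that the induction terminates inside each bounded weight range.
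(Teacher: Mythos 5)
Your starting point is sound: the paper does deduce Theorem~\ref{th:proj} from the short exact sequence $0\to K_1\to K_0\to F^\Lambda\to0$, where $K_1$, $K_0$, $F^\Lambda$ are the kernels of $\bF_i$, $F_i$, $F_i^\Lambda$ (this is Lemma~\ref{lem:resolution}; note that Theorem~\ref{th:main} for general $M$ is actually derived \emph{after} Theorem~\ref{th:proj}, so you should quote Lemma~\ref{lem:resolution} rather than Theorem~\ref{th:main} to avoid a circularity). However, there is a concrete error in the next step that derails the whole argument. You claim that $K_0=R(\beta+\alpha_i)e(\beta,i)\otimes_{R(\beta)}R^\Lambda(\beta)$ is free of rank $n+1$ over $R^\Lambda(\beta)$ with basis $\tau_n\cdots\tau_a e(\beta,i)$. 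This is false: the decomposition of Proposition~\ref{prop:R(n+1)} is into copies of $R(n,1)=R(n)\otimes\cori[x_{n+1}]$, so $K_0$ is free of rank $n+1$ over $R^\Lambda(\beta)\otimes\cor[t_i]$, hence of \emph{infinite} rank over $R^\Lambda(\beta)$. The same applies to $K_1$. Consequently $K$ is not exhibited as a cokernel between finitely generated free $R^\Lambda(\beta)$-modules, and the bound ``projective dimension $\le 1$'' is being asserted over the wrong ring.

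The idea you then sketch --- extracting a retraction from the bimodule map $\varphi$ of Corollary~\ref{cor:varphi-1} --- does not repair this; $\varphi$ lands in $R(n)\otimes\cor[t]$ rather than back in $K_0$, and there is no evident way to make it $R^\Lambda(\beta)$-linear and compatible with the cyclotomic ideal. What the paper actually does is different and is the heart of the argument: it records (Lemma~\ref{lem:Kproj}) that $K_1$ and $K_0$ are projective over $R^\Lambda(\beta)\otimes\cor[t_i]$, observes via Lemma~\ref{lem:integrable} that $F^\Lambda$ is annihilated by a \emph{monic} polynomial $g(t_i)=g(x_{n+1})$, and then invokes the purely algebraic Lemma~\ref{lem:pro}: if an $A[t]$-module $M$ has projective dimension $\le1$ over $A[t]$ and is killed by a monic central polynomial in $t$, then $M$ is projective over $A$. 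The proof of that lemma produces the splitting by a diagram chase (lift multiplication by $a(t)$ on $P_0$ through $j$, and use that $P_1/a(t)P_1$ is $A$-projective to split $a(t)\colon P_1\to P_1$). This finiteness input --- annihilation by a monic polynomial --- is exactly what is missing from your sketch, and without it there is no reason for the short exact sequence to split over $R^\Lambda(\beta)$.
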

Of course, the second statement is a consequence of the first
by the anti-involutions of \KLRs.
As its immediate consequence, we have
\begin{Cor}\label{cor:exact} \hfill
\bnum
\item
The functor $E_{i}^{\Lambda}$ sends finitely generated
projective modules to finitely generated projective modules.

\item The functor $F_{i}^{\Lambda}$ is exact.
\enum
\end{Cor}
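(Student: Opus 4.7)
The plan is to prove the exact sequence \eqref{exact:main},
\[ 0 \to \bF_i M \to F_i M \to F_i^\Lambda M \to 0\quad\text{for } M \in \Mod(R^\Lambda(n)), \]
and to deduce projectivity of $R^\Lambda(\beta+\alpha_i)e(\beta,i)$ from this sequence together with the freeness properties of the bimodules $R(\beta+\alpha_i)e(\beta,i)$ and $R(\beta+\alpha_i)e(i,\beta)$.

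First I would establish the exact sequence. The surjection $F_i M \twoheadrightarrow F_i^\Lambda M$ is the canonical quotient map induced by $R(\beta+\alpha_i) \twoheadrightarrow R^\Lambda(\beta+\alpha_i)$. To identify its kernel with $\bF_i M$, one applies the $(R(n),R(n))$-bimodule decomposition of Corollary~\ref{cor:varphi-1}, multiplied on the right by $e(\beta,i)$ and then tensored with $M$, and exploits the fact that $a^\Lambda(x_1)$ annihilates $M$. The key observation is that the kernel of $F_i M \to F_i^\Lambda M$ corresponds, after taking $e(\beta,i)$, to the $R(n)R^1(n)\cong R(n)\otimes_{R^1(n-1)} R^1(n)$ factor of the decomposition, which, once the cyclotomic relation on $M$ is accounted for, is precisely $\bF_i M$.

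Second, I would specialize to $M = R^\Lambda(\beta)$ and observe that both $F_i R^\Lambda(\beta)$ and $\bF_i R^\Lambda(\beta)$ are free as right $R^\Lambda(\beta)$-modules. By Proposition~\ref{prop:R(n+1)}, $R(\beta+\alpha_i)e(\beta,i)$ is a direct summand of a free right $R(\beta,1)$-module of finite rank; since $R(\beta,1) = R(\beta) \otimes \cor[x_{n+1}]$ is free as a right $R(\beta)$-module of countable rank, $R(\beta+\alpha_i)e(\beta,i)$ is free as a right $R(\beta)$-module, and tensoring over $R(\beta)$ with $R^\Lambda(\beta)$ preserves freeness. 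A parallel argument, using the analogous decomposition by left coset representatives combined with the embedding $\xi_n$, establishes freeness of $\bF_i R^\Lambda(\beta)$.

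The final and most delicate step is to deduce projectivity. From the resulting short exact sequence
\[ 0 \to \bF_i R^\Lambda(\beta) \to F_i R^\Lambda(\beta) \to R^\Lambda(\beta+\alpha_i)e(\beta,i) \to 0, \]
with free outer terms, I would construct a splitting using the decomposition of Corollary~\ref{cor:varphi-1}. The summand $R(n,1)\tau_n \cdots \tau_1$ of $R(n+1)$ is identified via $\varphi$ with $R(n) \otimes \cor[t]$, and both the left multiplication by $x_{n+1}$ and the right multiplication by $x_1$ (via $\xi_n$) correspond to multiplication by $t$. Under the cyclotomic quotient, the relation $a^\Lambda(x_1) = 0$ thus forces $a^\Lambda(t)$ to vanish on this summand, rendering its image in $F_i R^\Lambda(\beta)$ a finite free right $R^\Lambda(\beta)$-module, which maps isomorphically onto $R^\Lambda(\beta+\alpha_i)e(\beta,i)$ and realizes the target as a direct summand of the free module $F_i R^\Lambda(\beta)$, hence projective. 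The second claim, projectivity of $e(\beta,i)R^\Lambda(\beta+\alpha_i)$ as a left $R^\Lambda(\beta)$-module, follows from the first by applying the anti-involution $\psi$ of Definition~\ref{def:KLRalg}. The main obstacle is making the splitting rigorous: the cyclotomic ideal does not obviously respect the direct sum decomposition of Corollary~\ref{cor:varphi-1}, so careful bookkeeping is required to verify that the $\cor[t]$-part descends cleanly to $\cor[t]/a^\Lambda(t)$ and produces a genuine section of the quotient map.
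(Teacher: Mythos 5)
Your proposal correctly identifies that the exact sequence \eqref{exact:main} (Theorem \ref{th:main}) is the route to Theorem \ref{th:proj} and hence to the corollary, and the appeal to the anti-involution $\psi$ for the left-module version is exactly what the paper does. But there are two genuine gaps.

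First, you treat the exactness of $0\to \bF_iM\to F_iM\to F_i^\Lambda M\to 0$ --- and in particular the injectivity on the left --- as a consequence of Corollary \ref{cor:varphi-1} together with the relation $\x=0$ on $M$. That is not how it works, and it is in fact the technical heart of Section~\ref{sec:RLambda}. Corollary \ref{cor:varphi-1} yields the sequence $0\to R(n)\otimes_{R^1(n-1)}R^1(n)\to R(n+1)\to R(n)\otimes\cor[t]\to 0$, which after tensoring produces Theorem \ref{thm:bF}(b), a \emph{different} short exact sequence involving $E_i\bF_i$ and $\bF_iE_i$, not $F_i^\Lambda$. For the cyclotomic sequence one must prove that the map $P\cl K_1\to K_0$ (right multiplication by $\x\tau_1\cdots\tau_n$) is injective, and the surjectivity of $\widetilde{P}$ onto $\Ker\pi$ (i.e.\ Lemma \ref{lem:R(beta+alpha_i)}) tells you nothing about its kernel. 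The paper's resolution is to construct a near-inverse $Q\cl K_0\to K_1$ (right multiplication by $g_n\cdots g_1$, the intertwiner-type elements of \eqref{eq:ga}), prove the intricate Theorem \ref{thm:Q} that $Q\circ P$ equals right multiplication by $\sum_\nu A_\nu e(i,\nu)$, and then invoke Lemma \ref{lem:inj} because $\sum_\nu A_\nu e(i,\nu)$ is monic in $t_i$. None of this appears in your sketch, and it cannot be replaced by the observation that $\x$ kills $M$.

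Second, your strategy for deducing projectivity --- constructing an explicit section of the quotient $K_0\twoheadrightarrow F^\Lambda$ by arguing that the cyclotomic relation truncates the $\cor[t]$-summand of Corollary \ref{cor:varphi-1} to $\cor[t]/a^\Lambda(t)$ --- is not what the paper does and does not work as stated. You acknowledge at the end that the cyclotomic ideal does not respect the direct sum decomposition, but you characterize the fix as "careful bookkeeping." In fact there is no direct splitting to be found this way: the ideal $R(n+1)\x R(n+1)$ genuinely mixes the summands $R(n,1)\tau_n\cdots\tau_a$, and the relevant bound on $t_i=x_{n+1}$ in the quotient comes only from the nontrivial Lemma \ref{lem:integrable}. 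The paper avoids constructing a splitting entirely: it shows that $K_1$ and $K_0$ are projective over $R^\Lambda(\beta)\otimes\cor[t_i]$ (Lemma \ref{lem:Kproj}), so \eqref{eq:exact} is a length-one projective resolution of $F^\Lambda$ over $R^\Lambda(\beta)[t_i]$, and then applies the homological Lemma \ref{lem:pro}: a module over $A[t]$ of projective dimension $\le 1$ that is killed by a central monic polynomial in $t$ is automatically projective over $A$. This replaces your proposed explicit splitting with a clean structural argument, and it is the step that makes Theorem \ref{th:proj} --- and hence the corollary --- go through.
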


We shall prove Theorem~\ref{th:proj} as a consequence of the following theorem.

\begin{Thm}\label{th:main}
For any $i\in I$ and $\beta\in Q^+$, there exists an exact sequence
of $R(\beta+\alpha_i)$-modules \eq &&0\To
q^{(\alpha_i|2\Lambda-\beta)}\bF_iM\To F_iM\To F^\Lambda_iM\To0
\label{eq:fbarf}
\eneq which is functorial in $M\in\Mod(R^\Lambda(\beta))$.
\end{Thm}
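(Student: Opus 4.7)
Set $n=|\beta|$ and let $\tau_\omega\seteq\tau_1\tau_2\cdots\tau_n\in R(\beta+\alpha_i)$, which implements the cyclic shift sending position $1$ to position $n+1$; in particular, $\tau_\omega\,e(\beta,i)=e(i,\beta)\,\tau_\omega$. The plan is to construct an explicit degree-preserving $R(\beta+\alpha_i)$-linear morphism
\[
\phi\cl q^{(\alpha_i|2\Lambda-\beta)}\bF_iM\To F_iM,\qquad r\otimes m\longmapsto r\,\sigma\otimes m,
\]
where $\sigma\seteq a_i^\Lambda(x_1)\,\tau_\omega\,e(\beta,i)=a_i^\Lambda(x_1)\,e(i,\beta)\,\tau_\omega\in e(i,\beta)\,R(\beta+\alpha_i)\,e(\beta,i)$, and then to identify its cokernel with $F_i^\Lambda M$ and prove it is injective. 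By \eqref{eq:Z-grading}, $\deg(a_i^\Lambda(x_1)\,e(i,\beta))=2(\alpha_i|\Lambda)$ and $\deg(\tau_\omega\,e(\beta,i))=-(\alpha_i|\beta)$, so $\deg\sigma=(\alpha_i|2\Lambda-\beta)$, matching the required shift.

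Well-definedness of $\phi$ as an $R(\beta)$-balanced map (with $\xi_n$-twisted right action on the source) reduces to the congruence $\xi_n(y)\,\tau_\omega\equiv\tau_\omega\,y$ for $y\in R(\beta)$, modulo terms that disappear in $F_iM$ after the $a_i^\Lambda$-factor is taken into account. This is obtained from the congruences $x_k\tau_n\cdots\tau_1\equiv\tau_n\cdots\tau_1 x_{k+1}$ and $\tau_\ell\tau_n\cdots\tau_1\equiv\tau_n\cdots\tau_1\tau_{\ell+1}$ displayed before Corollary~\ref{cor:varphi-1}, via the anti-involution $\psi$ of $R$ that fixes the generators (which interchanges $\tau_1\cdots\tau_n$ and $\tau_n\cdots\tau_1$). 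That $\operatorname{Im}\phi\subseteq\ker(F_iM\to F_i^\Lambda M)$ is immediate since $\sigma$ begins with the cyclotomic generator $a^\Lambda(x_1)$.

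For the reverse inclusion, I use the decomposition $R(\beta+\alpha_i)e(\beta,i)=\bigoplus_{a=1}^{n+1}R(n,1)\tau_n\cdots\tau_a e(\beta,i)$ of Proposition~\ref{prop:R(n+1)} (together with its anti-involution mirror to accommodate $\tau_\omega$). For $a\ge 2$, every $\tau_\ell$ in $\tau_n\cdots\tau_a$ acts on positions $\ge 2$ and hence commutes with $a^\Lambda(x_1)=\sum_\nu a_{\nu_1}^\Lambda(x_1)e(\nu)$. Consequently, in any element $r\,a^\Lambda(x_1)\,r'\tau_n\cdots\tau_a e(\beta,i)\otimes m$ of the cyclotomic kernel with $a\ge 2$, the factor $a^\Lambda(x_1)$ can be pushed through the braid into $R(\beta)$ acting from the right, where the $R^\Lambda(\beta)$-structure of $M$ annihilates it. Thus only the long-braid summand contributes to the kernel, and it coincides with $\operatorname{Im}\phi$.

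The main obstacle is injectivity of $\phi$. The difficulty is that $F_iM$ and $\bF_iM$ are built from different idempotents ($e(\beta,i)$ vs.\ $e(i,\beta)$) and different right $R(\beta)$-actions (direct inclusion vs.\ $\xi_n$-twisted), so $\sigma$ is the only bridge between them and its faithfulness must be established. My plan is to deduce injectivity from Corollary~\ref{cor:varphi-1} (or its mirror): the bimodule exact sequence $0\to R(n)R^1(n)\to R(n+1)\to R(n)\otimes\cor[t]\to 0$ identifies the long-braid summand of $F_iM$ with $q^{(\alpha_i|2\Lambda-\beta)}\bF_iM$ via right multiplication by $\sigma$. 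Under this identification $a_i^\Lambda(x_1)$ corresponds to the monic polynomial $a_i^\Lambda(t)$, which is a non-zero-divisor on the free $\cor[t]$-factor; combined with the vanishing of $a^\Lambda(x_1)$ on the complementary bulk piece, this forces $\phi$ to be injective.
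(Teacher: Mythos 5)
Your $\sigma=a_i^\Lambda(x_1)\,\tau_\omega\,e(\beta,i)$ is exactly the element $\x\tau_1\cdots\tau_n$ that the paper uses for the map $P\cl K_1\to K_0$ (note $e(i,\beta)\x=a_i^\Lambda(x_1)e(i,\beta)$), so the \emph{definition} of the injection and the identification of the cokernel with $F_i^\Lambda M$ are on the right track and correspond to \eqref{eq:kernel}, Lemma~\ref{lem:R(beta+alpha_i)} and \eqref{eq:Ppi}. The well-definedness computation you sketch is also in the spirit of the lemma after Lemma~\ref{lem:R(beta+alpha_i)}, although the paper verifies the needed congruences by direct computation rather than invoking the anti-involution and the displays after~\eqref{eq:phi-2}, because the relevant ideal is $R(\beta+\alpha_i)\x R(\beta)e(\beta,i)$, which is not simply $\x$ times the ideal $R^1(n)R(n)$.

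The genuine gap is the injectivity argument, and it is not a small one. First, $a_i^\Lambda(x_1)$ being monic in $x_1$ is not the quantity that controls injectivity: when one tries to undo the braid $\tau_1\cdots\tau_n$, each factor $\tau_a$ with $\nu_a\ne i$ contributes a polynomial $Q_{i,\nu_a}(x_1,x_{a+1})$ via $\tau_a^2=Q_{a,a+1}$. The paper therefore constructs the intertwiner-like elements $g_a$ in \eqref{eq:ga}, the map $Q$ given by right multiplication by $g_n\cdots g_1$, and proves in Theorem~\ref{thm:Q} that $Q\circ P$ is right multiplication by $A_\nu=a_i^\Lambda(x_1)\prod_{\nu_a\ne i}Q_{i,\nu_a}(x_1,x_{a+1})$; it is \emph{this} polynomial — monic in $t_i$ only after accounting for the leading coefficients $t_{i,\nu_a;-a_{i\nu_a},0}$ of the $Q$'s — whose injectivity is used, together with the projectivity of $K_1$ over $R^\Lambda(\beta)\otimes\cor[t_i]$ (Lemma~\ref{lem:Kproj}) and the elementary Lemma~\ref{lem:inj}. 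Your appeal to Corollary~\ref{cor:varphi-1} does not give this: the sequence $0\to R(n)R^1(n)\to R(n+1)\to R(n)\otimes\cor[t]\to0$ describes a quotient of $R(n+1)$, whereas $\operatorname{Im}\phi$ lives inside $F_iM$, and passing between them is precisely where the $Q$-factors appear and must be tracked.

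Second, even granting injectivity of the bimodule map $P\cl K_1\to K_0$, one cannot conclude exactness of the sequence $0\to\bF_iM\to F_iM\to F_i^\Lambda M\to0$ for an \emph{arbitrary} $M\in\Mod(R^\Lambda(\beta))$ by tensoring: one needs $\operatorname{Tor}_1^{R^\Lambda(\beta)}(F^\Lambda,M)=0$. The paper supplies this by first proving Theorem~\ref{th:proj} (projectivity of $F^\Lambda=R^\Lambda(\beta+\alpha_i)e(\beta,i)$ over $R^\Lambda(\beta)$), via Lemma~\ref{lem:pro}, using that \eqref{eq:exact} is a length-one projective resolution over $R^\Lambda(\beta)[t_i]$ and that $F^\Lambda$ is killed by a monic polynomial in $t_i$ (Lemma~\ref{lem:integrable}). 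Your proposal treats the $M$-level sequence directly but never establishes the flatness/projectivity that makes the tensored sequence left-exact for all $M$. To close the gap you would need to reproduce both the $g_a$/$Q$/$A_\nu$ computation and the projective-dimension argument.
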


\subsection{Proof of Theorem \ref{th:proj} and Theorem \ref{th:main} }

Set
\begin{equation}\label{eq:kernel}
\begin{aligned}
& F^{\Lambda} = R^{\Lambda}(\beta+\alpha_i) e(\beta, i) =
\dfrac{R(\beta+\alpha_i) e(\beta, i)}{R(\beta+\alpha_i)
\x R(\beta+\alpha_i) e(\beta, i)}, \\
& K_{0}= R(\beta+ \alpha_i) e(\beta, i) \otimes_{R(\beta)}
R^{\Lambda}(\beta) = \dfrac{R(\beta+\alpha_i) e(\beta,
i)}{R(\beta+\alpha_i)
\x R(\beta) e(\beta, i)}, \\
& K_{1} = R(\beta+\alpha_i) e(i, \beta) \otimes_{R(\beta)}
R^{\Lambda}(\beta) = \dfrac{R(\beta+\alpha_i) e(i,
\beta)}{R(\beta+\alpha_i) \x[2] R^{1}(\beta) e(i, \beta)}.
\end{aligned}
\end{equation}
We regard $R(\beta+\alpha_i) e(i, \beta)$ as a right
$R(\beta)$-module through the embedding
$$R(\beta)\isoto
R^1(\beta)\hookrightarrow R(\beta+\alpha_i),$$
which defines a right $R^{\Lambda}(\beta)$-module structure on $K_1$. Hence
$F^\Lambda$, $K_0$ and $K_1$ can be regarded as
$\bl R(\beta+\alpha_i),R^\Lambda(\beta)\br$-bimodules. Then
$F^{\Lambda}$, $K_{0}$ and $K_1$ are the kernels of the functors
$F_i^\Lambda$, $F_i$ and $\bF_i$ from $\Mod(R^\Lambda(\beta))$ to
$\Mod(R(\beta+\alpha_i))$, respectively.

Let $t_i$ be an indeterminate of degree $(\alpha_i|\alpha_i)$. Then,
$\cor[t_i]$ acts from the right on $R(\beta+\alpha_i) e(i, \beta)$
and $K_1$  by $t_i = x_1 e(i, \beta)$.
Hence $R(\beta+\alpha_i) e(i, \beta)$ and $K_{1}$ have a structure
of $(R(\beta+\alpha_i),R(\beta)\otimes\cor[t_i])$-bimodule.
Similarly, $\cor[t_i]$ acts from the right on $K_{0}$ and
$F^\Lambda$ by $t_i=x_{n+1} e(\beta, i)$, and $K_{0}$ and
$F^\Lambda$ also have a structure of
 $(R(\beta+\alpha_i),R(\beta)\otimes\cor[t_i])$-bimodule.
Note that $F^\Lambda$, $K_1$ and $K_0$ are in fact
$(R(\beta+\alpha_i),R^\Lambda(\beta)\otimes\cor[t_i])$-bimodules.

\begin{Lem}[\cite{KL09}]\label{lem:Kproj}
Both $K_1$ and $K_0$ are finitely generated projective right
$\bl R^\Lambda(\beta)\otimes\cor[t_i]\br$-modules.
\end{Lem}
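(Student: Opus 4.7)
The plan is to establish something slightly stronger than claimed: both $K_0$ and $K_1$ are graded \emph{free} right $R^\Lambda(\beta)\otimes\cor[t_i]$-modules of rank $n+1$ (with appropriate degree shifts), where $n=|\beta|$; finite generation and projectivity then follow at once.

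For $K_0$, the anti-involution $\psi$ fixes the generators of the subalgebra $R(n,1)\subset R(n+1)$ pointwise, so applying $\psi$ to Proposition~\ref{prop:R(n+1)} gives the right $R(n,1)$-module decomposition
$$R(n+1)=\bigoplus_{a=1}^{n+1}\tau_a\cdots\tau_n\,R(n,1).$$
Right-multiplying by $e(\beta,i)=e(\beta)\otimes e(i)\in R(n,1)$ and using $R(n,1)e(\beta,i)\cong R(\beta)\otimes\cor[x_{n+1}]$ realizes $R(\beta+\alpha_i)e(\beta,i)$ as a free right $R(\beta)\otimes\cor[x_{n+1}]$-module with basis $\{\tau_a\cdots\tau_n\}_{1\le a\le n+1}$. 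Since $x_{n+1}$ is central in $R(n,1)$, its right action commutes with the right $R(\beta)$-action, so applying $\scbul\otimes_{R(\beta)}R^\Lambda(\beta)$ produces
$$K_0\simeq\bigoplus_{a=1}^{n+1}\tau_a\cdots\tau_n\cdot\bigl(R^\Lambda(\beta)\otimes\cor[t_i]\bigr),$$
a free right $R^\Lambda(\beta)\otimes\cor[t_i]$-module of rank $n+1$, once $t_i$ is identified with right multiplication by $x_{n+1}$.

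For $K_1$ the argument is symmetric. The alternative coset decomposition of $S_{n+1}$ already used in Proposition~\ref{prop:R^{1}} gives $R(n+1)=\bigoplus_{a=0}^{n}\tau_a\cdots\tau_1\,R(1,n)$ as a right $R(1,n)$-module. Multiplying by $e(i,\beta)=e(i)\otimes e(\beta)$ and identifying $R(1,n)e(i,\beta)\cong\cor[x_1]\otimes R(\beta)$ (with $R(\beta)$ acting through $\xi_n$) realizes $R(\beta+\alpha_i)e(i,\beta)$ as a free right $\cor[x_1]\otimes R(\beta)$-module of rank $n+1$. Because $x_1$ commutes with every generator of $R^1(n)$ (it commutes with each $x_k$ for $k\ge 2$, with each $\tau_l$ for $l\ge 2$, and with the idempotents $\xi_n(e(\nu))$), the same tensor-product argument yields $K_1$ as a free right $R^\Lambda(\beta)\otimes\cor[t_i]$-module of rank $n+1$. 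The only genuine bookkeeping is tracking these commutation facts and keeping track of the degree shifts on each summand; there is no conceptual obstacle, since everything reduces to Proposition~\ref{prop:R(n+1)} together with the fact that $R(n,1)$ and $R(1,n)$ are honest tensor products over $\cor$.
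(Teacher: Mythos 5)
Your proof is correct and follows essentially the same route as the paper: reduce to the freeness of $R(\beta+\alpha_i)e(\beta,i)$ (resp.\ $R(\beta+\alpha_i)e(i,\beta)$) over $R(\beta)\otimes\cor[t_i]$ coming from Proposition~\ref{prop:R(n+1)} and its mirror coset decomposition, then base-change along $R(\beta)\epi R^\Lambda(\beta)$. The only difference is cosmetic: you explicitly invoke $\psi$ to pass from the left to the right $R(n,1)$-module decomposition and you record the rank $n+1$ and freeness, whereas the paper leaves both implicit and states only projectivity.
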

\Proof
The statement for $K_0$ follows from
$ K_{0}= R(\beta+ \alpha_i) e(\beta, i) \otimes_{R(\beta)\otimes\cor[t_i]}
\bigl(R^{\Lambda}(\beta)\otimes\cor[t_i]\bigr)$
and the fact that
$R(\beta+ \alpha_i) e(\beta, i)$ is a finitely generated projective module
over $R(\beta)\otimes\cor[t_i]$ (Proposition~\ref{prop:R(n+1)}).
The proof for $K_1$ is similar.
\QED

\begin{Lem} \label{lem:R(beta+alpha_i)}
For $i \in I$ and $\beta \in Q^{+}$ with $|\beta|=n$, we have
\bna
\item $R(\beta+\alpha_i) \x R(\beta+\alpha_i) =
\sum_{a=0}^{n} R(\beta+\alpha_i) \x\tau_1 \cdots
\tau_{a}.$

\item $R(\beta+\alpha_i)\x R(\beta+\alpha_i)
e(\beta, i) = R(\beta+\alpha_i) \x R(\beta) e(\beta, i)$\\
\hs{40ex}$+R(\beta+\alpha_i) \x \tau_{1}\cdots \tau_{n} e(\beta, i)$.
\end{enumerate}
\end{Lem}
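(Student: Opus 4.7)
My plan is to prove part (a) by a short ideal-theoretic computation and then deduce part (b) from (a) by an idempotent manipulation. The whole argument rests on two observations: first, that $\x = \sum_{\nu} a^\Lambda_{\nu_1}(x_1)e(\nu)$ is central in the subalgebra $R(1,n) = R(1)\otimes R^1(n) \subset R(n+1)$, since $\x$ lies in $R(1)\otimes 1$ and $R(1) = \cori[x_1]$ is commutative; and second, that the decomposition $R(n+1) = \bigoplus_{a=0}^n \tau_a\cdots\tau_1 R(1,n)$ established in the proof of Proposition~\ref{prop:R^{1}} yields, by applying the anti-involution $\psi$ (which fixes the generators $x_k$, $\tau_l$, $e(\nu)$ and reverses products), the ``mirrored'' decomposition
\[
R(n+1) \;=\; \bigoplus_{a=0}^n R(1,n)\,\tau_1\cdots\tau_a.
\]

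To prove (a), I write $R \seteq R(\beta+\alpha_i)$ and apply the mirrored decomposition to the right-hand factor of $R\x R$:
\[
R\,\x\, R
\;=\; \sum_{a=0}^{n} R\cdot \x R(1,n)\cdot \tau_1\cdots\tau_a
\;=\; \sum_{a=0}^{n} R\cdot R(1,n)\cdot \x\tau_1\cdots\tau_a
\;=\; \sum_{a=0}^{n} R\,\x\tau_1\cdots\tau_a,
\]
where the second equality uses the centrality of $\x$ in $R(1,n)$, and the last uses $R\cdot R(1,n) = R$ (as $1\in R(1,n)$).

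For (b), right-multiplying (a) by $e(\beta,i)$ gives $R\,\x\, R\cdot e(\beta,i) = \sum_{a=0}^{n} R\,\x\tau_1\cdots\tau_a\, e(\beta,i)$. When $a\le n-1$ the cyclic permutation $s_1\cdots s_a$ fixes the position $n+1$, so $\tau_1\cdots\tau_a\, e(\beta,i) = e(\beta,i)\tau_1\cdots\tau_a$; since $\tau_1,\ldots,\tau_{n-1}\in R(n)$, this can be rewritten as $(\tau_1\cdots\tau_a\, e(\beta))\cdot e(\beta,i)$ with $\tau_1\cdots\tau_a\, e(\beta)\in R(\beta)$. Hence each $a\le n-1$ summand lies in $R\,\x\, R(\beta)\,e(\beta,i)$, while the $a = n$ summand is precisely $R\,\x\,\tau_1\cdots\tau_n\, e(\beta,i)$; the reverse inclusion of the two sides is immediate.

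The argument is short, and the only conceptual point---not so much an obstacle as the needed observation---is recognizing the centrality of $\x$ in $R(1,n)$ and invoking the $\psi$-mirrored form of the Proposition~\ref{prop:R^{1}} decomposition so that $\x$ can be commuted past an element of $R(1,n)$ and thereby placed immediately to the left of the $\tau_1\cdots\tau_a$ factors.
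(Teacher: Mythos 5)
Your proof is correct and follows essentially the same route as the paper, which simply notes that assertion (a) follows from the decomposition $R(n+1)=\sum_{a=0}^{n}\cori[x_1]\otimes R^1(n)\,\tau_1\cdots\tau_a$ (i.e.\ exactly your $\psi$-mirror of the decomposition in the proof of Proposition~\ref{prop:R^{1}}) and that (b) then follows immediately. Your write-up helpfully makes explicit the two points the paper leaves to the reader: that $\x=\sum_j a_j^\Lambda(x_1)e(j,n)$ lies in $R(1)\otimes 1$ and is therefore central in $R(1,n)$, so that it can be slid past the $R(1,n)$-factor to sit directly against $\tau_1\cdots\tau_a$; and that for $a\le n-1$ the element $\tau_1\cdots\tau_a$ lies in $R(n)$ and commutes with $e(\beta,i)$, which is what reduces those summands to $R(\beta+\alpha_i)\x R(\beta)e(\beta,i)$.
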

\begin{proof}
The assertion (a) can be verified easily from
$$R(n+1) = \sum_{a=0}^{n} \cori[x_{1}] \otimes
R^{1}(n) \tau_{1} \cdots \tau_{a}.$$ The assertion (b) follows
immediately from (a).
\end{proof}

Let $\pi\cl  K_{0}\longrightarrow F^{\Lambda}$ be the canonical projection.
We will construct a short exact sequence of
$(R(\beta+\alpha_i),R^\Lambda(\beta)\otimes\cor[t_i])$-bimodules
$$0 \longrightarrow K_{1}
\overset{P} \longrightarrow K_{0} \overset {\pi} \longrightarrow
F^{\Lambda} \longrightarrow 0.$$

\medskip
Let $\widetilde{P}\cl R(\beta+\alpha_i) e(i, \beta) \longrightarrow
K_{0}$ be the right multiplication by $\x \tau_{1} \cdots \tau_{n}$.
Then $\widetilde{P}$ is a left $R(\beta+\alpha_i)$-linear
homomorphism.

Using \eqref{eq:kernel} and Lemma \ref{lem:R(beta+alpha_i)}, we see
that
\eq&&
\on{Im} (\widetilde{P})=
\Ker \pi = \dfrac{R(\beta+\alpha_i) \x
R(\beta + \alpha_i) e(\beta, i)} {R(\beta + \alpha_i)
\x R(\beta) e(\beta, i)}\subset K_0.
\label{eq:Ppi}
\eneq

\begin{Lem}
The map $\widetilde{P}\cl R(\beta+\alpha_i) e(i,\beta)
\longrightarrow K_{0}$ is a right $(R(\beta) \otimes
\cor[t_i])$-module homomorphism.
\end{Lem}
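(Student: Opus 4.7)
The plan is to verify the right $(R(\beta)\otimes\cor[t_i])$-linearity of $\widetilde{P}$ by checking it separately on the two families of generators: elements $y\in R(\beta)$, which act on the source $R(\beta+\alpha_i)e(i,\beta)$ by $v\cdot y=v\xi_n(y)$ and on the target $K_{0}$ by the natural right multiplication through $R(\beta)\hookrightarrow R(\beta+\alpha_i)$; and the generator $t_i$, which acts as $x_1$ on the source and as $x_{n+1}$ on the target. In both cases the strategy is the same: massage the difference $\widetilde{P}(v\cdot z)-\widetilde{P}(v)\cdot z$ into the form $v\x\cdot(\text{element of the ideal defining }K_{0})$.

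The first ingredient I would record is that $\x$ commutes with $R^{1}(n)=\xi_n(R(\beta))$ inside $R(\beta+\alpha_i)$. This is a direct check on generators: $\xi_n(x_k)=x_{k+1}$ and $x_1$ commute as polynomial variables, $\xi_n(\tau_l)=\tau_{l+1}$ commutes with $x_1$ since $l+1\ge 2$, and $\tau_{l+1}e(\nu)=e(s_{l+1}\nu)\tau_{l+1}$ preserves $\nu_1$, so commutes with the $x_1$-dependent idempotent sum $\x=\sum_{\nu}a^{\Lambda}_{\nu_1}(x_1)e(\nu)$; the idempotents $\xi_n(e(\mu))=\sum_{j\in I}e(j,\mu)$ commute with $\x$ trivially.

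The second ingredient is the congruence
\begin{equation*}
x_1^{a}\,\xi_n(y)\,\tau_1\cdots\tau_n\;\equiv\;\tau_1\cdots\tau_n\,y\,x_{n+1}^{a}\pmod{R^{1}(n)\,R(n)}\qquad (y\in R(n),\ a\in\{0,1\}).
\end{equation*}
I would obtain it by applying the anti-involution $\psi$ of $R(n+1)$ to the congruence $af(x_{n+1})\tau_n\cdots\tau_1\equiv\tau_n\cdots\tau_1\,\xi_n(a)f(x_1)\pmod{R(n)R^{1}(n)}$ used in the proof of Corollary~\ref{cor:varphi-1}, using that $\psi$ reverses order, fixes the generators, commutes with $\xi_n$, and sends $R(n)R^{1}(n)$ onto $R^{1}(n)R(n)$, together with the fact that $R(n)$ commutes with $x_{n+1}$.

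With these two ingredients in hand the computation is immediate. For $y\in R(\beta)$, the first ingredient yields $v\xi_n(y)\x=v\x\xi_n(y)$, hence
\begin{equation*}
\widetilde{P}(v\cdot y)-\widetilde{P}(v)\cdot y=v\x\bigl(\xi_n(y)\tau_1\cdots\tau_n-\tau_1\cdots\tau_n\,y\bigr)\;\in\;v\x R^{1}(n)R(n)\,e(\beta,i);
\end{equation*}
commuting $\x$ back through $R^{1}(n)$ exhibits this as an element of $vR^{1}(n)\,\x R(\beta)e(\beta,i)\subset R(\beta+\alpha_i)\x R(\beta)e(\beta,i)$, so it vanishes in $K_{0}$. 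The $t_i$-case is parallel, using $x_1\x=\x x_1$ together with the $y=1$, $a=1$ specialization of the congruence. The main obstacle is producing the key congruence in a form compatible with the defining ideal of $K_{0}$; once one recognizes it as the $\psi$-twist of the congruence already used for Corollary~\ref{cor:varphi-1}, the rest follows formally from the commutation of $\x$ with $R^{1}(n)$.
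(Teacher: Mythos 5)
Your proof is correct, and it takes a genuinely different route from the paper's. The paper establishes the linearity of $\widetilde{P}$ by a direct generator-by-generator computation: it pushes $x_{a+1}$, $x_1$ and $\tau_{a+1}$ past $\x\tau_1\cdots\tau_n e(\beta,i)$ one step at a time using $\tau_a x_b = x_{s_a(b)}\tau_a + (\text{idempotent error})$ and $\tau_{a+1}\tau_a\tau_{a+1} = \tau_a\tau_{a+1}\tau_a + \overline{Q}_{a,a+1,a+2}$, and then shows each accumulated error term falls into $R(\beta+\alpha_i)\x R(\beta)e(\beta,i)$, which is the somewhat delicate part. Your argument instead packages the commutation data once and for all: the observation that $\x$ centralizes $R^1(n)$ (correct, by the same kind of inspection on generators that you record), together with the $\psi$-twist of the congruence $a f(x_{n+1})\tau_n\cdots\tau_1 \equiv \tau_n\cdots\tau_1\,\xi_n(a)f(x_1) \bmod R(n)R^1(n)$ recorded just before Corollary~\ref{cor:varphi-1}; you correctly use that $\psi$ is order-reversing, fixes generators, preserves $R(n)$ and $R^1(n)$, and commutes with $\xi_n$. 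The bridge from ``$\bmod\,R^1(n)R(n)$'' to ``$\bmod\,R(\beta+\alpha_i)\x R(\beta)e(\beta,i)$'' --- commuting $\x$ leftward past the $R^1(n)$ factor so that $v\x R^1(n)R(n)e(\beta,i)=vR^1(n)\x R(\beta)e(\beta,i)$ --- is exactly the right move and closes the gap cleanly. What your approach buys is economy and transparency: it treats the three generator checks uniformly, recycles a congruence already in play, and makes visible why $\x$ is a spectator in the computation. The cost is that it delegates the underlying bookkeeping to the ``note that'' congruences before Corollary~\ref{cor:varphi-1}, which the paper asserts without derivation; so the total computational content is comparable, but the organization is appreciably cleaner.
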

\begin{proof}
For $1 \le a \le n$, we have
\begin{equation*}
\begin{aligned}
&x_{a+1} \bl\x\tau_{1} \cdots \tau_{n}e(\beta,i)\br  =
\x \tau_{1} \cdots \tau_{ a-1} (x_{ a+1} \tau_{a})
\tau_{a+1} \cdots \tau_{n} e(\beta,i)\\
&\hs{10ex} =\x \tau_{1} \cdots
\tau_{a-1}(\tau_{ a} x_{ a}+ e_{a,a+1}) \tau_{a+1}
\cdots \tau_{n} e(\beta,i) \\
&\hs{10ex} = \bl\x\tau_{1} \cdots \tau_{n}e(\beta,i)\br x_{a}+
(\x \tau_{1} \cdots \tau_{a-1} \tau_{a+1} \cdots
\tau_{n}) e_{a, n+1}e(\beta,i)\\
&\hs{10ex} = \bl\x \tau_{1} \cdots \tau_{n}e(\beta,i)\br x_{a}  +(\tau_{a+1}
\cdots \tau_{n}) (\x \tau_{1} \cdots
\tau_{a-1})e_{a,n+1}e(\beta,i)\\
&\hs{10ex} \equiv \bl\x \tau_{1} \cdots \tau_{n} e(\beta,i)\br x_{a}
\mod R(\beta+ \alpha_i) \x R(\beta) e(\beta,i),
\end{aligned}
\end{equation*}
and
\begin{equation*}
\begin{aligned}
&x_{1} \bl\x\tau_{1} \cdots \tau_{n}  e(\beta,i)\br  = \x
(\tau_{1} x_{2} - e_{1,2}) \tau_{2} \cdots \tau_{n}  e(\beta,i)\\
&\hs{10ex} = \x \tau_{1} (x_{2} \tau_{2}) \tau_{3} \cdots
\tau_{n}e(\beta,i)- (\tau_{2} \cdots \tau_{n}) \x e_{1,n+1} e(\beta,i)\\
&\hs{10ex} \equiv \x \tau_{1} (x_{2} \tau_{2}) \tau_{3} \cdots\tau_{n}e(\beta,i)
=\x \tau_{1} (\tau_{2} x_{3} - e_{2,3})
\tau_{3} \cdots \tau_{n}  e(\beta,i)\\
&\hs{10ex} = \x \tau_{1} \tau_{2} (x_{3} \tau_{3}) \tau_{4}
\cdots \tau_{n} e(\beta,i)- (\tau_{3} \cdots \tau_{n}) \x \tau_1
e_{2,n+1} e(\beta,i)
\\
&\hs{10ex} \ \ \cdots \cdots \\
&\hs{10ex} \equiv (\x \tau_{1} \cdots \tau_{n}) x_{n+1} e(\beta,i)
\mod R(\beta+ \alpha_i) \x R(\beta) e(\beta,i).
\end{aligned}
\end{equation*}
For $1 \le a \le n-1$, we have
\begin{equation*}
\begin{aligned}
& \tau_{a+1} \bl\x \tau_{1} \cdots \tau_{n}e(\beta,i)\br =
\x \tau_{1} \cdots \tau_{a-1} (\tau_{a+1} \tau_{a}
\tau_{a+1}) \tau_{a+2} \cdots \tau_{n}  e(\beta,i)\\
& = \x \tau_{1} \cdots \tau_{a-1} \bl\tau_{a} \tau_{a+1}
\tau_{a}+ \overline{Q}_{a,a+1,a+2}\br \tau_{a+2} \cdots \tau_{n}  e(\beta,i)\\
& = \bl\x \tau_{1} \cdots \tau_{a-1} \tau_{a} \tau_{a+1}
\cdots \tau_{n}e(\beta,i)\br \tau_{a}+ \x \tau_{1} \cdots
\tau_{a-1} \overline{Q}_{a,a+1,a+2} \tau_{a+2} \cdots \tau_{n}e(\beta,i).
\end{aligned}
\end{equation*}
Since
\begin{equation*}
\begin{aligned}
\x & \tau_{1} \cdots \tau_{a-1} \cor[x_{1}, \ldots,
x_{n+1}] \tau_{a+2} \cdots \tau_{n} e(\beta,i)\\
& \subset \x \tau_{1} \cdots \tau_{a-1} \sum_{ w \in
\langle s_{a+2}, \ldots, s_{n}
\rangle } \tau_{w} \cor[x_{1}, \ldots, x_{n+1} ] e(\beta,i)\\
& \subset \sum_{ w \in \langle s_{a+2}, \ldots, s_{n} \rangle }
\tau_{w} \x \tau_{1} \cdots \tau_{a-1} \cor[x_{1},
\ldots, x_{n+1}]  e(\beta,i)\\
& \subset \sum_{ w \in \langle s_{a+2}, \ldots, s_{n} \rangle }
\tau_{w}\cor[x_{n+1}] \x \tau_{1} \cdots \tau_{a-1} \cor[x_{1},
\ldots, x_{n}] e(\beta,i)\\
& \subset R(\beta+\alpha_i)\x R(\beta) e(\beta,i),
\end{aligned}
\end{equation*}
we are done.
\end{proof}

Since $\widetilde{P}$ is
$\bl R(\beta+\alpha_i),R(\beta)\otimes\cor[t_i]\br$-bilinear,
it maps $R(\beta +\alpha_{i})\x[2] R^{1}(\beta) e(i, \beta)$ to
$\dfrac{R(\beta +\alpha_{i}) \x R(\beta) e(\beta,i)}
{R(\beta +\alpha_{i})\x R(\beta) e(\beta,i)}=0$ in
$K_{0}$. Hence we get a $(R(\beta+\alpha_i),R(\beta)\otimes\cor[t_i])$-bilinear
homomorphism
\eqn&&P\cl K_{1} \rightarrow K_{0}\eneqn
given by the right multiplication by $\x \tau_{1}
\cdots \tau_{n}$. Moreover, \eqref{eq:Ppi} implies
$\on{Im}\, {P} =\Ker \pi$.
Therefore we get an exact sequence of
$(R(\beta+\alpha_i), R(\beta)\otimes\cor[t_i])$-bimodules
\eq&&K_{1} \overset{P} \longrightarrow K_{0} \overset{\pi} \longrightarrow
F^{\Lambda}\longrightarrow  0.\eneq

We will show next that $P$ is injective by
constructing a homomorphism $Q\cl K_0\to K_1$ nearly inverse to $P$.

For $1\le a\le n$, we define an element $g_a$ of $R(\beta+\alpha_i)$
by
\begin{equation} \label{eq:ga}
g_{a} =\tau_{a}\kern-1ex \sum_{\substack{\nu\in I^{\beta+\alpha_i},\\\nu_{a} \neq
\nu_{a+1}}}e(\nu) + \bigl(x_{a+1} - x_{a} -(x_{a+1} - x_{a})^2
\tau_{a}\bigr)\kern-1ex\sum_{\substack{\nu \in I^{\beta+\alpha_i},\\ \nu_{a}=\nu_{a+1}}}
e(\nu)\in R(\beta+\alpha_i).
\end{equation}

\begin{Rem}\label{rem:intertwiner}
The elements $$\bigl(1+(x_a-x_{a+1})\tau_a\bigr)e_{a,a+1}=
\bigl(\tau_a(x_{a+1}-x_{a})-1\bigr)e_{a,a+1}$$ are called the {\it
intertwiners}. Note that they satisfy the same  identities
as \eqref{eq:intertwiner} given below. The elements $g_a$'s are
variants of intertwiners.
\end{Rem}

\begin{Lem} \label{lem:ga}
For $1 \le a \le n$, we have
\begin{equation}
x_{s_a(b)}g_{a} = g_{a}x_{b} \  (1\le b\le n+1) \quad\text{and}\quad
\tau_{a} g_{a+1} g_{a} = g_{a+1} g_{a} \tau_{a+1}.
\label{eq:intertwiner}
\end{equation}
\end{Lem}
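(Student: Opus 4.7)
The plan is to verify both identities of Lemma~\ref{lem:ga} by direct case-by-case computation, multiplying each on the right by $e(\nu)$ and splitting according to the equality pattern of the relevant indices $\nu_a, \nu_{a+1}, \nu_{a+2}$. The only tools needed are the defining relations \eqref{eq:KLR} of $R(\beta+\alpha_i)$: the commutator formula $(\tau_k x_l - x_{s_k(l)}\tau_k) e(\nu) \in \{\pm e(\nu), 0\}$, the quadratic relation $\tau_k^2 e(\nu) = Q_{\nu_k,\nu_{k+1}}(x_k,x_{k+1}) e(\nu)$, and the braid-type relation \eqref{eq:tau3}.

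For the first identity $x_{s_a(b)} g_a = g_a x_b$, I would first dispose of $b \notin \{a, a+1\}$, where $s_a(b) = b$ and $x_b$ commutes with every factor appearing in $g_a$. For $b \in \{a, a+1\}$ and $\nu_a \neq \nu_{a+1}$, one has $g_a e(\nu) = \tau_a e(\nu)$ and the identity is immediate from $(\tau_a x_l - x_{s_a(l)} \tau_a) e(\nu) = 0$. In the remaining case $\nu_a = \nu_{a+1}$, expanding $g_a e(\nu) = \bigl(x_{a+1} - x_a - (x_{a+1}-x_a)^2 \tau_a\bigr) e(\nu)$ and applying $(\tau_a x_a - x_{a+1} \tau_a) e(\nu) = -e(\nu)$, one finds that the $\pm e(\nu)$ corrections combine with the factor $(x_{a+1}-x_a)^2$ to cancel exactly, yielding $x_{s_a(b)} g_a e(\nu) = g_a x_b e(\nu)$.

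For the braid-type identity $\tau_a g_{a+1} g_a = g_{a+1} g_a \tau_{a+1}$, the same strategy applies, with cases indexed by the triple $(\nu_a, \nu_{a+1}, \nu_{a+2})$. Whenever at least one of the two adjacencies fails, the corresponding $g$ collapses to a bare $\tau$, and the identity reduces either to iterated application of the commutation formula or, in the subcase $\nu_a \neq \nu_{a+2}$, to the braid relation \eqref{eq:tau3} together with the vanishing of $\overline{Q}_{a,a+1,a+2} e(\nu')$ on any $\nu'$ whose entries at positions $a$ and $a+2$ differ. The main obstacle is the case $\nu_a = \nu_{a+1} = \nu_{a+2}$, where both $g_a$ and $g_{a+1}$ contribute their full quadratic tails. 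Here one must expand each side fully, repeatedly push each $x_k$ past each $\tau_l$ using $(\tau_l x_l - x_{l+1} \tau_l) e(\nu) = -e(\nu)$ and $(\tau_l x_{l+1} - x_l \tau_l) e(\nu) = e(\nu)$, and then use $\tau_a \tau_{a+1} \tau_a e(\nu) = \tau_{a+1} \tau_a \tau_{a+1} e(\nu) - \overline{Q}_{a,a+1,a+2} e(\nu)$. The quadratic coefficients $(x_{a+1}-x_a)^2$ and $(x_{a+2}-x_{a+1})^2$ built into $g_a$ and $g_{a+1}$ are designed precisely so that the $\overline{Q}$ correction appears symmetrically on both sides, while the lower-order $x$-terms generated by the commutations match up term-by-term; once this bookkeeping is carried out, the two expressions coincide and the lemma follows.
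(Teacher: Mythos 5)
Your treatment of the first identity $x_{s_a(b)}g_a = g_a x_b$ is correct and matches the paper's computation.

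For the braid-type identity, however, your case analysis has a genuine gap. The claim ``whenever at least one of the two adjacencies fails, the corresponding $g$ collapses to a bare $\tau$'' is wrong, because $g_{a+1}$ (resp.\ $g_a$ on the right-hand side) acts on whichever idempotent the preceding factor has produced, not on $e(\nu)$ itself. The instructive case is $\nu_a = \nu_{a+2} \neq \nu_{a+1}$: here both adjacencies fail, so $g_a e(\nu) = \tau_a e(\nu) = e(s_a\nu)\tau_a$, but $(s_a\nu)_{a+1} = (s_a\nu)_{a+2}$, so $g_{a+1}$ contributes its full quadratic tail after all. Moreover $\nu_a=\nu_{a+2}$ here, so $\overline{Q}_{a,a+1,a+2}e(\nu)$ does \emph{not} vanish, and verifying the identity requires the $\overline{Q}$ correction coming from the braid relation to cancel against the quadratic tails of the $g$'s on the two sides. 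That is a nontrivial cancellation, not ``iterated application of the commutation formula,'' and it is not covered by your $\nu_a\neq\nu_{a+2}$ subcase nor by the single hard case $\nu_a=\nu_{a+1}=\nu_{a+2}$ you single out. The cases $\nu_a=\nu_{a+1}\neq\nu_{a+2}$ and $\nu_a\neq\nu_{a+1}=\nu_{a+2}$ are likewise not ``both $g$'s collapse''; each side carries exactly one polynomial tail, just on different idempotents, and they reduce to the braid relation with vanishing $\overline{Q}$ only after carefully tracking how the idempotents shift under conjugation.

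The paper sidesteps the entire case explosion with a structural argument. From the already-established first identity it deduces that $S := \tau_a g_{a+1}g_a - g_{a+1}g_a\tau_{a+1}$ intertwines $x_b$ with $x_{s_{a,a+2}(b)}$ for every $b$. It then observes that $S$ lies in the $\cor[x_a,x_{a+1},x_{a+2}]$-span of $1,\tau_a,\tau_{a+1},\tau_a\tau_{a+1},\tau_{a+1}\tau_a$, with no $\tau_a\tau_{a+1}\tau_a$ term; since the longest element $s_as_{a+1}s_a$ is the only element of the parabolic copy of $S_3$ acting as the transposition $(a,a+2)$ on the indices, the intertwining relation forces every coefficient of $S$ to vanish. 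This is shorter and far less error-prone than the expansion you propose. If you still want the brute-force route it can be made to work, but you must at minimum correct the ``$g$ collapses'' claim and carry out the $\nu_a = \nu_{a+2}\neq\nu_{a+1}$ case, not just $\nu_a = \nu_{a+1} = \nu_{a+2}$.
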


\begin{proof}
We will first show $x_{a} g_{a} e(\nu) = g_{a}
x_{a+1} e(\nu)$. If $\nu_a \neq \nu_{a+1}$, our assertion is clear.
If $\nu_a=\nu_{a+1}$,
then we have
$$(x_{a} g_{a} - g_{a} x_{a+1})e(\nu) = \bigl((x_{a+1} - x_{a})(x_{a} - x_{a+1}) +
(x_{a+1} - x_{a})^2\bigr)e(\nu) =0.$$ Similarly, we have $x_{a+1} g_{a} = g_{a}
x_{a}$, and $x_bg_a=g_ax_b$ for $b\not=a,a+1$.

In order to prove the last identity, set $S=\tau_{a} g_{a+1} g_{a} -
g_{a+1} g_{a} \tau_{a+1}$.  We have by \eqref{eq:partial}
{\allowdisplaybreaks
\begin{equation*}
\begin{aligned}
 (\tau_{a} g_{a+1} g_{a}) x_{a} & = \tau_{a} g_{a+1} x_{a+1} g_{a}
=\tau_{a} x_{a+2} g_{a+1} g_{a} = x_{a+2} \tau_{a} g_{a+1} g_{a},\\
 (g_{a+1} g_{a} \tau_{a+1}) x_{a}& = g_{a+1} g_{a} x_{a} \tau_{a+1}
= g_{a+1} x_{a+1} g_{a} \tau_{a+1} = x_{a+2} g_{a+1}
g_{a}\tau_{a+1}, \\
 (\tau_{a} g_{a+1} g_{a}) x_{a+1} & = \tau_{a} g_{a+1} x_{a} g_{a} =
\tau_{a} x_{a} g_{a+1} g_{a} = (x_{a+1} \tau_{a} -e_{a,a+1})
g_{a+1}g_{a} \\
& = x_{a+1} \tau_{a} g_{a+1} g_{a} - g_{a+1} g_{a} e_{a+1, a+2}, \\
(g_{a+1} g_{a} \tau_{a+1}) x_{a+1} & = g_{a+1} g_{a} ( x_{a+2}
\tau_{a+1} -e_{a+1, a+2}) \\
& =x_{a+1} g_{a+1} g_{a} \tau_{a+1} - g_{a+1} g_{a} e_{a+1, a+2},\\
 (\tau_{a} g_{a+1} g_{a}) x_{a+2} & = \tau_{a} x_{a+1}g_{a+1} g_{a}
= (x_{a} \tau_{a}+e_{a,a+1})g_{a+1}g_{a} \\
& = x_{a} \tau_{a} g_{a+1} g_{a} + g_{a+1} g_{a} e_{a+1, a+2}, \\
(g_{a+1} g_{a} \tau_{a+1}) x_{a+2} & = g_{a+1} g_{a} ( x_{a+1}
\tau_{a+1}+e_{a+1, a+2}) \\
& =x_{a} g_{a+1} g_{a} \tau_{a+1} + g_{a+1} g_{a} e_{a+1, a+2}.
\end{aligned}
\end{equation*}}
Hence $Sx_{b} = x_{s_{a, a+2}(b)}S$ for all $b$.

On the other hand, we can easily see that $S$
does not contain the term $\tau_{a} \tau_{a+1}\tau_{a}$ and
belongs to the $\cor[x_a,x_{a+1},x_{a+2}]$-module generated by
$1$, $\tau_a$, $\tau_{a+1}$, $\tau_a\tau_{a+1}$ and
$\tau_{a+1}\tau_a$. It follows that
$S=0$.
\end{proof}

\begin{Prop}
Let $\widetilde{Q}\cl R(\beta+ \alpha_i) e(\beta, i)\longrightarrow
K_{1}$ be the $R(\beta+\alpha_i)$-module homomorphism given by the
right multiplication by $g_{n}\cdots g_{1}$. Then $\widetilde{Q}$ is
right $R(\beta)\otimes \cor[t_i]$-linear. That is, \eqn
&&\widetilde{Q}(sx_a) = \widetilde{Q}(s) x_{a+1} \ (1 \le a \le
n),\\
&& \widetilde{Q}(sx_{n+1}) = \widetilde{Q}(s) x_{1}, \\
&&\widetilde{Q}(s\tau_a)=\widetilde{Q}(s)\tau_{a+1}\ (1\le a<n)\eneqn
for any $s\in  R(\beta+ \alpha_i) e(\beta, i)$.
\end{Prop}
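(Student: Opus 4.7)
The plan is to prove all three identities by direct computation, exploiting the intertwining relations of Lemma~\ref{lem:ga}, namely $x_b g_a = g_a x_{s_a(b)}$ (equivalent to $x_{s_a(b)}g_a = g_ax_b$) and $\tau_a g_{a+1}g_a = g_{a+1}g_a\tau_{a+1}$. The strategy is simply to slide each generator of $R(\beta)\otimes\cor[t_i]$ from left to right past the word $g_n g_{n-1}\cdots g_1$, using the fact that $s_k$ fixes any index $b$ outside $\{k,k+1\}$, so $g_k$ commutes with $x_b$ in that case, and similarly $\tau_a$ commutes with any $g_k$ whose indices $\{k,k+1\}$ are disjoint from $\{a,a+1,a+2\}$ (the indices appearing in the three-term relation).

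For the first identity with $1\le a\le n$, observe that $x_a$ commutes with $g_k$ for all $k>a$ because $s_k$ fixes $a$; hence $x_a g_n\cdots g_{a+1} = g_n\cdots g_{a+1}\,x_a$. Then $x_a g_a = g_a x_{a+1}$ by Lemma~\ref{lem:ga}, and $x_{a+1}$ commutes with $g_k$ for all $k<a$ since $s_k$ fixes $a+1$. Composing, $x_a g_n\cdots g_1 = g_n\cdots g_1\,x_{a+1}$, which gives the first relation. For the second identity, one chains the equalities $x_{n+1}g_n = g_n x_n$, then $x_n g_{n-1} = g_{n-1}x_{n-1}$, and so on; iterating down the product yields $x_{n+1}g_n g_{n-1}\cdots g_1 = g_n g_{n-1}\cdots g_1\,x_1$, which translates to $\widetilde Q(sx_{n+1}) = \widetilde Q(s)\,x_1$ (and also confirms compatibility with the $\cor[t_i]$-actions $t_i=x_{n+1}$ on the left and $t_i=x_1$ on the right).

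For the third identity with $1\le a<n$, note that $\tau_a$ commutes with $g_k$ for $k\ge a+2$ (braid relation together with $s_a$ fixing indices $\ge a+2$), so $\tau_a g_n\cdots g_{a+2} = g_n\cdots g_{a+2}\,\tau_a$. Next apply Lemma~\ref{lem:ga} to replace $\tau_a g_{a+1}g_a$ by $g_{a+1}g_a\tau_{a+1}$. Finally, $\tau_{a+1}$ commutes with $g_k$ for all $k\le a-1$, so it slides all the way to the right, yielding $\tau_a g_n\cdots g_1 = g_n\cdots g_1\,\tau_{a+1}$, as required. There is no genuine obstacle here; the only point one must check is that $\widetilde Q$ actually lands in the claimed codomain $K_1 = R(\beta+\alpha_i)e(i,\beta)/R(\beta+\alpha_i)\x[2]R^1(\beta)e(i,\beta)$, which follows from tracking idempotents: starting from $e(\beta,i)$ with $i$ in position $n{+}1$, each $g_k$ transports the label $i$ from position $k{+}1$ to position $k$ (either by the $\tau_k$-swap when $\nu_k\ne i$, or trivially when $\nu_k=i$), so $sg_n\cdots g_1\in R(\beta+\alpha_i)e(i,\beta)$. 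The entire argument is then a purely formal computation, and no relations modulo the denominator of $K_1$ are needed for these identities.
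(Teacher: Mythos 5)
Your proof is correct and takes essentially the same approach as the paper, which simply cites Lemma~\ref{lem:ga} as immediate justification. Your explicit sliding argument (including the observation that $\tau_a$ commutes with $g_k$ when $\{a,a+1\}$ and $\{k,k+1\}$ are disjoint, since then $\tau_a$ commutes with $\tau_k$, $x_k$, $x_{k+1}$ and $e_{k,k+1}$) and the idempotent-tracking check that $e(\beta,i)g_n\cdots g_1 \subset R(\beta+\alpha_i)e(i,\beta)$ correctly fill in the details that the paper leaves implicit.
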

\begin{proof}
It follows immediately from the preceding lemma.
\end{proof}

\begin{Prop}
The map $\widetilde{Q}$ induces a well-defined $(R(\beta+\alpha_i),
R(\beta)\otimes\cor[t_i])$-bimodule homomorphism
$$Q\cl K_{0} = \dfrac{R(\beta+\alpha_i) e(\beta, i)} {R(\beta+\alpha_i)
\x R(\beta) e(\beta, i)}
\longrightarrow K_{1} =\dfrac {R(\beta+\alpha_i) e(i, \beta)}
{R(\beta+\alpha_i) \x[2] R^{1}(\beta) e(i, \beta)}$$ given
by the right multiplication by $g_{n} \cdots g_{1}$.
\end{Prop}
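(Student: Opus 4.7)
The plan is to show that $\widetilde{Q}$ sends the defining submodule of $K_0$, namely $R(\beta+\alpha_i)\x R(\beta) e(\beta,i) \subset R(\beta+\alpha_i) e(\beta, i)$, into the defining submodule of $K_1$, namely $R(\beta+\alpha_i) \x[2] R^{1}(\beta) e(i, \beta) \subset R(\beta+\alpha_i) e(i,\beta)$. The left $R(\beta+\alpha_i)$-linearity of $Q$ is automatic since $\widetilde{Q}$ is right multiplication, and the right $(R(\beta)\otimes\cor[t_i])$-linearity is inherited directly from the preceding proposition; so it suffices to establish the inclusion of relations.

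Using right $(R(\beta)\otimes\cor[t_i])$-linearity, I may factor a general generator $r \x s\, e(\beta,i)$ (with $r\in R(\beta+\alpha_i)$, $s\in R(\beta)$) as
\[
\widetilde{Q}(r \x s\, e(\beta,i)) = r\,\widetilde{Q}(\x e(\beta,i))\cdot \xi_n(s).
\]
Since $\xi_n(s)\in R^{1}(\beta)$ commutes with $e(i,\beta)$ (as $R^{1}(\beta)$ acts on positions $2,\ldots,n+1$ while $e(i,\beta)$ only fixes position $1$), the right factor already has the required form. Hence the whole question reduces to showing
\[
\widetilde{Q}(\x e(\beta,i)) \in R(\beta+\alpha_i)\,\x[2]\,R^{1}(\beta)\,e(i,\beta).
\]

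The key computational identity is
\[
\x e(\beta, i)\cdot g_{n}\cdots g_{1} \;=\; g_{n}\cdots g_{1}\cdot \x[2] e(i, \beta),
\]
which clearly exhibits the left-hand side as an element of $R(\beta+\alpha_i)\,\x[2]\,e(i,\beta)\subset R(\beta+\alpha_i)\,\x[2]\,R^{1}(\beta)\,e(i,\beta)$ (using $1\in R^{1}(\beta)$). To verify it, I use two consequences of Lemma \ref{lem:ga}. First, since $s_{a}(1)=1$ for $a\ge 2$ and $g_{1}x_{1}=x_{2}g_{1}$, the relation $x_{s_a(b)}g_a = g_a x_b$ iterates to
\[
f(x_{1})\cdot g_{n}\cdots g_{1} \;=\; g_{n}\cdots g_{1}\cdot f(x_{2})
\]
for any polynomial $f$. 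Second, from $g_a e(\nu)=e(s_a\nu)g_a$ one obtains
\[
e(\nu)\cdot g_{n}\cdots g_{1} \;=\; g_{n}\cdots g_{1}\cdot e(s_{1}s_{2}\cdots s_{n}\nu),
\]
and the permutation $s_{1}s_{2}\cdots s_{n}$ is precisely the cyclic shift that sends position $n+1$ to position $1$ and $k\mapsto k+1$ for $1\le k\le n$. Combining these two statements term-by-term for $\x=\sum_\nu a^{\Lambda}_{\nu_1}(x_{1})e(\nu)$ restricted to $\nu_{n+1}=i$, each summand $a^{\Lambda}_{\nu_1}(x_{1})e(\nu)$ becomes $g_{n}\cdots g_{1}\cdot a^{\Lambda}_{\nu_1}(x_{2})e(i,\nu_{1},\ldots,\nu_{n})$, and summing over $\nu$ produces $g_{n}\cdots g_{1}\cdot\x[2] e(i,\beta)$ as claimed.

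I do not anticipate any serious obstacle: both inputs (linearity of $\widetilde{Q}$ and the commutation relations for $g_a$) are already in place, and the identity is a bookkeeping exercise. The only subtlety is to notice that the cyclic shift $s_1 s_2 \cdots s_n$ aligns the first coordinate of the shifted idempotent with the label that controls $\x$, so that $a^{\Lambda}_{\nu_1}(x_{2})e(i,\nu_1,\ldots,\nu_n)$ is indeed the corresponding piece of $\x[2] e(i,\beta)$.
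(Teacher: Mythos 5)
Your proof is correct, and it proceeds from the same key ingredient as the paper, namely the commutation relations for the elements $g_a$ recorded in Lemma~\ref{lem:ga}, but it organizes the reduction differently. The paper first invokes the coset decomposition $R(\beta+\alpha_i)\x R(\beta)=\bigoplus_{a=0}^{n-1}R(\beta+\alpha_i)\x\tau_1\cdots\tau_a$ (a consequence of Lemma~\ref{lem:R(beta+alpha_i)}) and then applies $\widetilde{Q}$ to each basis element $\x\tau_1\cdots\tau_a\, e(\beta,i)$, obtaining $\widetilde{Q}(e(\beta,i))\x[2]\tau_2\cdots\tau_{a+1}=0$. You instead observe that the left $R(\beta+\alpha_i)$-linearity and the right $(R(\beta)\otimes\cor[t_i])$-linearity of $\widetilde{Q}$ (already established in the preceding proposition) reduce the whole question to the single identity $\x e(\beta,i)\,g_n\cdots g_1=g_n\cdots g_1\,\x[2]\,e(i,\beta)$ in $R(\beta+\alpha_i)$, which you then verify by iterating $x_{s_a(b)}g_a=g_ax_b$ and $g_ae(\nu)=e(s_a\nu)g_a$ and noting that $s_1s_2\cdots s_n$ is the cyclic shift sending the last position to the first. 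This bypasses the explicit basis decomposition and isolates a cleaner, coordinate-free form of the same computation. Both routes are valid; yours is slightly more economical, while the paper's makes the cancellation manifest term by term. One small point worth stating explicitly (you have it implicitly): the equality $\x s\,e(\beta,i)=\x e(\beta,i)\,s$ used in the factoring step relies on $s\in R(\beta)$ commuting with $e(\beta,i)$, which holds because $R(\beta)$ acts on positions $1,\dots,n$ only.
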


\begin{proof}
Since $$R(\beta+ \alpha_i) \x R(\beta) = \bigoplus_{a=0}^{n-1}
R(\beta+\alpha_i) \x \tau_{1} \cdots \tau_{a},$$ it suffices to show
that $\widetilde{Q}$ sends $\x \tau_{1} \cdots \tau_{a}e(\beta,i)$
($0\le a\le n-1$) to $0$ in $K_1$. However, Lemma \ref{lem:ga}
implies $\widetilde{Q}\bigl(\x \tau_{1} \cdots
\tau_{a}e(\beta,i)\bigr)
=\widetilde{Q}\bigl(e(\beta,i)\bigr)\x[2]\tau_{2} \cdots \tau_{a+1}
=0$. \QED

\begin{Thm} \label{thm:Q}
For each $\nu \in I^{\beta}$,  set
$$A_{\nu} = a_{i}^\Lambda(x_{1})\prod_{1 \le a
\le n, \nu_{a} \neq i} Q_{i, \nu_{a}}(x_{1}, x_{a+1}).$$  Then the
following diagram is commutative, in which the vertical arrow is the
multiplication by $A_\nu$ from the right.
\eq\label{eq:Qcomm}&&
\ba{l}
\xymatrix@C=17ex@R=6ex{\dfrac{R(\beta+\alpha_i) e(i, \nu)} {R(\beta + \alpha_i)
\x[2] R^{1}(\beta) e(i, \nu)}
 \ar[d]^-{A_{\nu}} \ar[r]^-{P=\x \tau_{1} \cdots \tau_{n}}
&  \dfrac{R(\beta+\alpha_i) e(\nu, i)} {R(\beta+\alpha_i)
\x R(\beta) e(\nu, i)} \ar[dl]^-{\quad Q = g_{n} \cdots
g_{1}}
\\ \dfrac{R(\beta+\alpha_i) e(i, \nu)} {R(\beta + \alpha_i)
\x[2] R^{1}(\beta) e(i, \nu)} &
 }\ea
\eneq
\end{Thm}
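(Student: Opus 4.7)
The plan is to chase through the definitions of $P$ and $Q$ and compare with the vertical arrow $A_\nu$. Using the projector identities $e(i,\nu)\x = a_i^\Lambda(x_1)e(i,\nu)$ and $e(i,\nu)\tau_1\cdots\tau_n = \tau_1\cdots\tau_n e(\nu,i)$, we have $P(e(i,\nu)) = a_i^\Lambda(x_1)\tau_1\cdots\tau_n e(\nu,i)$ in $K_0$, and hence
\[
Q\bigl(P(e(i,\nu))\bigr) = a_i^\Lambda(x_1)\,\tau_1\tau_2\cdots\tau_n\,e(\nu,i)\,g_n g_{n-1}\cdots g_1 \in K_1.
\]
The commutativity of the diagram thus reduces to the congruence
\begin{equation}\label{eq:reduced}
a_i^\Lambda(x_1)\,\tau_1\cdots\tau_n\,e(\nu,i)\,g_n\cdots g_1 \;\equiv\; A_\nu\,e(i,\nu) \pmod{R(\beta+\alpha_i)\,\x[2]\,R^{1}(\beta)\,e(i,\nu)}.
\end{equation}

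I would prove \eqref{eq:reduced} by induction on $n$ (trivial at $n=0$), analyzing the innermost coupling $\tau_n e(\nu,i) g_n$ according to the two cases in the definition \eqref{eq:ga} of $g_n$. When $\nu_n \ne i$, the identity $e(\nu,i) g_n = \tau_n e(\nu_{\le n-1},i,\nu_n)$ yields $\tau_n e(\nu,i) g_n = \tau_n^2 e(\ldots) = Q_{i,\nu_n}(x_n,x_{n+1})\,e(\nu_{\le n-1},i,\nu_n)$, producing the expected $Q$-factor with $i$ moved one step to the left. Repeated application of Lemma~\ref{lem:ga} — in the form $\tau_a g_{a+1}g_a = g_{a+1}g_a\tau_{a+1}$, together with the intertwining $x_{s_a(b)} g_a = g_a x_b$ — then carries $Q_{i,\nu_n}(x_n,x_{n+1})$ to $Q_{i,\nu_n}(x_1,x_{n+1})$ as the $i$ cycles back to position $1$, and the residual falls under the inductive hypothesis applied to $\nu_{\le n-1}$. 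When $\nu_n = i$, the diagonal branch of \eqref{eq:ga} together with the $\nu_n=\nu_{n+1}$ commutations of \eqref{eq:KLR} yields $\tau_n(x_{n+1}-x_n)e(\nu,i) = -(x_{n+1}-x_n)\tau_n e(\nu,i) + 2\,e(\nu,i)$ and the vanishing identity $\tau_n(x_{n+1}-x_n)^2\tau_n e(\nu,i) = 0$ (a direct consequence of $\tau_n^2 e(\nu,i) = Q_{ii}(x_n,x_{n+1})e(\nu,i) = 0$ combined with the first relation), giving $\tau_n e(\nu,i) g_n = 2\,e(\nu,i) - (x_{n+1}-x_n)\tau_n e(\nu,i)$. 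Since position $n$ contributes no $Q$-factor to $A_\nu$ in this case, one needs the $(x_{n+1}-x_n)\tau_n$ contribution, after multiplication by $a_i^\Lambda(x_1)$ and processing through $\tau_1\cdots\tau_{n-1}$ and $g_{n-1}\cdots g_1$, to exactly cancel one of the two copies of $e(\nu,i)$ modulo the $K_1$-defining relation.

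I expect this Case~(ii) bookkeeping to be the main obstacle: every correction arising from commuting $a_i^\Lambda(x_1)$ past the $\tau$-chain via the divided-difference identity $\tau_a f = (s_a f)\tau_a + (\partial_a f)e_{a,a+1}$ from \eqref{eq:partial}, together with the polynomial parts of $g_a$ at positions where $\nu_a = i$, must be shown to assemble — using the monicity of $a_i^\Lambda$ — into an element of $R(\beta+\alpha_i)\,\x[2]\,R^{1}(\beta)\,e(i,\nu)$. This requires the systematic replacement $a_j^\Lambda(x_k)\cdot s \equiv 0$ in $K_1$ for $k \ge 2$ and $s$ in the appropriate right $R^{1}(\beta)$-submodule, which ensures that all contributions other than $A_\nu\,e(i,\nu)$ are absorbed into the defining ideal of $K_1$.
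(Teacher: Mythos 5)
Your reduction to the identity
\[
\x\,\tau_1\cdots\tau_n\,g_n\cdots g_1\,e(i,\nu)\;\equiv\;A_\nu\,e(i,\nu)
\pmod{R(\beta+\alpha_i)\,\x[2]\,R^1(\beta)\,e(i,\nu)}
\]
is exactly the paper's equation \eqref{eq:taug}, and your treatment of the case $\nu_n\ne i$ — extract $\tau_n^2 e(\cdots)=Q_{i,\nu_n}(x_n,x_{n+1})e(\cdots)$, then transport that $Q$-factor to $Q_{i,\nu_n}(x_1,x_{n+1})$ via the $x$-intertwining $x_{s_a(b)}g_a=g_ax_b$ and fall back on the inductive hypothesis for $\nu_{\le n-1}$ — is the paper's argument (the $\tau$-intertwining $\tau_a g_{a+1}g_a=g_{a+1}g_a\tau_{a+1}$ you also cite is not actually needed there).

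The case $\nu_n=i$ is where you part ways with the paper and where the argument is genuinely incomplete. You expand $\tau_n(x_{n+1}-x_n)e(\nu,i)=2e(\nu,i)-(x_{n+1}-x_n)\tau_n e(\nu,i)$ and hope the two resulting contributions cancel down to $A_\nu e(i,\nu)$. This works for the base case $n=1$ (I checked: $a_i^\Lambda(x_1)(x_2-x_1)\tau_1 e(i,i)\equiv a_i^\Lambda(x_1)e(i,i)$ modulo the ideal, by passing $a_i^\Lambda(x_1)$ across $\tau_1$), but for $n>1$ neither term is in a form to which the inductive hypothesis at $n-1$ applies: the ``$2e(\nu,i)$'' term still carries the full-length products $\tau_1\cdots\tau_{n-1}$ and $g_{n-1}\cdots g_1$ wrapped around an idempotent that does not match the inductive shape, and the ``$-(x_{n+1}-x_n)\tau_n$'' term retains an undigested $\tau_n$. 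The paper instead \emph{keeps} $\tau_n(x_{n+1}-x_n)$ intact, uses the $g$-intertwining to slide $(x_{n+1}-x_n)$ past $g_{n-1}\cdots g_1$ to become $(x_{n+1}-x_1)$ on the right, and then splits further on $\nu_{n-1}$: when $\nu_{n-1}\ne i$ it invokes the braid relation $\tau_{n-1}\tau_n\tau_{n-1}=\tau_n\tau_{n-1}\tau_n-\overline Q_{n-1,n,n+1}$ to push $\tau_n$ to the outside so the inductive hypothesis can be applied, and when $\nu_{n-1}=i$ it reduces via the identity $\bigl((x_{n+1}-x_1)\tau_n-\tau_n(x_{n+1}-x_1)\tau_n(x_n-x_1)\bigr)e(i,\nu)=e(i,\nu)$. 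You do not mention the braid relation at all, and it is essential. Finally, the claimed ``systematic replacement $a_j^\Lambda(x_k)\cdot s\equiv 0$ for $k\ge 2$'' misreads the defining ideal of $K_1$: that ideal is $R(\beta+\alpha_i)\,\x[2]\,R^1(\beta)\,e(i,\beta)$, which involves $a^\Lambda(x_2)$ specifically (together with closure under right multiplication by $\cor[x_1,\ldots,x_{n+1}]$ as in \eqref{eq:24}), not $a^\Lambda(x_k)$ for arbitrary $k\ge 2$.
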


\begin{proof}
We will verify
\eq
&&\ba{rcl}
\x \tau_{1} \cdots \tau_{n} g_{n} \cdots g_{1}e(i,\nu)
&=&\x \tau_{1} \cdots \tau_{n} e(\nu,i)g_{n}\cdots g_{1}\\[1.5ex]
&\equiv& A_{\nu}e(i,\nu)
\mod{R(\beta + \alpha_i)\x[2] R^{1}(\beta) e(i, \nu)}
\ea\label{eq:taug}
\eneq
by induction on $n$. In the course of the proof, we use the fact
 that
\eq
&&
\text{$R(\beta + \alpha_i)\x[2] R^{1}(\beta) e(i, \beta)$
is a right $\cor[x_1,\ldots, x_{n+1}]$-module.}
\label{eq:24}
\eneq

Note that
\begin{equation}
\tau_{n} e(\nu,i) g_{n} = \left\{
\ba{ll}\tau_{n} e(\nu, i) \tau_{n}
= Q_{i, \nu_{n}}(x_{n}, x_{n+1}) e(\nu', i, \nu_{n}) & \text{if $\nu_{n} \neq i$,}
 \\[2.5ex]
\tau_{n}\bigl((x_{n+1} - x_{n}) - (x_{n+1}-x_{n})^2 \tau_{n}\bigr)e(\nu,i) \\
\hs{20ex}=\tau_{n}(x_{n+1} - x_{n}) e(\nu,i)& \text{if $\nu_{n} = i$,}
\ea
\right.\label{eq:tngn}
\end{equation}
where $\nu' = (\nu_1, \cdots, \nu_{n-1})$.

\medskip
\noindent Assume first $n=1$. If $\nu_1\not=i$, then \eqref{eq:taug}
is obvious by \eqref{eq:tngn}. If $\nu_1=i$, then  $\x\tau_{1}
e(\nu,i) g_{1} =\x\tau_{1}(x_{2} - x_{1}) e(i,i)$. Then by Lemma
\ref{lem:ga},
it is equal to
\begin{equation*}
\begin{aligned}
& \x(\tau_{1}(x_{2} - x_{1})-1) e(i,i)+\x e(i,i)
\\
& =(\tau_{1}(x_{2} - x_{1})-1)\x[2] e(i,i)+\x e(i,i)
\\
& \equiv A_\nu e(i,i)
\mod{R(\beta + \alpha_i)\x[2] R^{1}(\beta) e(i, i)}.
\end{aligned}
\end{equation*}

\medskip
Thus we may assume that $n>1$.

\medskip

\noindent
(i)\ First assume $\nu_{n} \neq i$. Then we have
\begin{equation*}
\begin{aligned}
 \x \tau_{1}  \cdots \tau_{n}  g_{n} \cdots g_{1} e(i,
\nu) &= \x \tau_{1} \cdots \tau_{n-1} Q_{i,
\nu_{n}}(x_{n}, x_{n+1}) g_{n-1} \cdots g_{1} e(i, \nu) \\
& =\x \tau_{1} \cdots \tau_{n-1} g_{n-1} \cdots g_{1}
 e(i, \nu)Q_{i, \nu_{n}}(x_{1}, x_{n+1}).
\end{aligned}
\end{equation*}
By the induction hypothesis, we have
$$\x \tau_{1} \cdots \tau_{n-1} g_{n-1} \cdots g_{1}
 e(i, \nu)\equiv A_{\nu'}e(i,\nu) \
\mod{R(\beta + \alpha_i)\x[2] R^{1}(\beta) e(i, \nu)}.$$
Hence \eqref{eq:24} implies that
\eqn
&&\x \tau_{1} \cdots \tau_{n-1} g_{n-1} \cdots g_{1} e(i, \nu)
Q_{i, \nu_{n}}(x_{1}, x_{n+1})\\
&&\hs{15ex}\equiv A_{\nu'}e(i,\nu)Q_{i, \nu_{n}}(x_{1}, x_{n+1})
\mod{R(\beta + \alpha_i)\x[2] R^{1}(\beta) e(i, \nu)}.
\eneqn
Then the desired assertion
follows from
$A_{\nu'}Q_{i, \nu_{n}}(x_{1}, x_{n+1})e(i,\nu)= A_{\nu}e(i,\nu)$.

\noindent
(ii)\ Now assume $\nu_{n} =i$. Set $\nu'' = (\nu_{1}, \ldots, \nu_{n-2})$.
Then we have
\begin{equation} \label{eq:38}
\begin{aligned}
& \x \tau_{1}  \cdots \tau_{n}  g_{n} \cdots
g_{1}e(i, \nu) =  \x \tau_{1} \cdots \tau_{n}
(x_{n+1}-x_{n}) g_{n-1} \cdots g_{1} e(i, \nu)\\
&\hs{20ex} =  \x \tau_{1} \cdots \tau_{n} e(\nu'',
\nu_{n-1}, i,i ) g_{n-1} \cdots g_{1} (x_{n+1} - x_{1}).
\end{aligned}
\end{equation}

\noindent
(a)\ If $\nu_{n-1} \neq i$, then
\eqn
\tau_{n-1} \tau_{n} g_{n-1} e(\nu'', i, \nu_{n-1}, i) &=& \tau_{n-1}
\tau_{n} \tau_{n-1} e(\nu'', i, \nu_{n-1}, i) \\
&=& (\tau_{n} \tau_{n-1} \tau_{n} -
\overline{Q}_{n-1, n, n+1}) e(\nu'', i, \nu_{n-1}, i).
\eneqn
(See \eqref{eq:tau3}.)  Hence
\begin{equation}
\begin{aligned}
& \x  \tau_{1} \cdots \tau_{n} g_{n} \cdots g_{1}e(i, \nu)  \\
& =\x\tau_{1} \cdots \tau_{n-2} (\tau_{n}
\tau_{n-1} \tau_{n} - \overline{Q}_{n-1,n,n+1}) e(\nu'', i,
\nu_{n-1}, i) g_{n-2} \cdots g_{1} (x_{n+1} - x_{1}) \\
& = \tau_{n}\x \tau_{1} \cdots
\tau_{n-1} e(\nu'', i,i, \nu_{n-1}) g_{n-2} \cdots g_{1} \tau_{n}
(x_{n+1} - x_{1}) \\
& \ \ \ -  \x \tau_{1} \cdots \tau_{n-2}
g_{n-2} \cdots g_{1} \overline{Q}_{1,n,n+1}(x_{n+1} - x_{1})e(i, \nu).
\end{aligned}\label{eq:qtg}
\end{equation}
The first term is equal to
\begin{equation*}
\begin{aligned}
&\tau_{n}\x \tau_{1} \cdots
\tau_{n-1} e(\nu'', i,i, \nu_{n-1}) g_{n-2} \cdots g_{1} \tau_{n}
(x_{n+1} - x_{1}) \\
& =\tau_{n} \x \tau_{1} \cdots
\tau_{n-1} g_{n-2} \cdots g_{1} (x_{n}-x_{1}) \tau_{n} e(i,\nu)\\
&= \tau_{n} \Bigl(\x \tau_{1} \cdots
\tau_{n-1} g_{n-1} \cdots g_{1}e(i,s_{n-1}\nu)\Bigr) \tau_{n}
\quad\text{by \eqref{eq:38}}
 \\
&\equiv \tau_{n}  A_{(\nu'', i)}e(i, s_{n-1} \nu) \tau_{n}
\mod{R(\beta + \alpha_i)\x[2] R^{1}(\beta) e(i, \nu)}\\
&= A_{(\nu'',i)} Q_{\nu_{n-1}, \nu_{n}}(x_{n}, x_{n+1})e(i,\nu)\\
&= A_{\nu''} Q_{\nu_{n-1},i}(x_{n}, x_{n+1})e(i,\nu).
\end{aligned}
\end{equation*}
On the other hand, the second term in \eqref{eq:qtg} is equivalent to
\begin{equation*}
\begin{aligned}
& -A_{\nu''}(Q_{i, \nu_{n-1}}(x_{n+1}, x_{n}) - Q_{i,\nu_{n-1}}(x_{1}, x_{n}))
e(i, \nu) \\
&\hs{10ex} = -A_{\nu''} Q_{\nu_{n-1},i} (x_{n}, x_{n+1})e(i, \nu) +
A_{\nu''} Q_{i, \nu_{n-1}}(x_{1}, x_{n})e(i, \nu)\\
&\hs{25ex} \mod{R(\beta + \alpha_i)\x[2] R^{1}(\beta) e(i, \nu)}.
\end{aligned}
\end{equation*}
Hence these two terms add up to
$$A_{\nu''} Q_{i, \nu_{n-1}}(x_{1}, x_{n})e(i, \nu) = A_{\nu}e(i, \nu).$$

\medskip
\noindent
(b)\ Finally, suppose $\nu_{n} = \nu_{n-1}=i$. In this
case, we have
\begin{equation*}
 \x\tau_{1} \cdots \tau_{n} g_{n} \cdots
g_{1}e(i, \nu)  =  \x \tau_{1} \cdots \tau_{n} g_{n-1}
\cdots g_{1} (x_{n+1}-x_{1})e(i, \nu)
\end{equation*}
and
\begin{equation*}
\begin{aligned}
\tau_{n-1} \tau_{n} & g_{n-1}e(\nu'',i,i,i)  = \tau_{n-1} \tau_{n}
\bigl((x_{n}-x_{n-1})- \tau_{n-1}(x_{n}-x_{n-1})^2)\bigr)e(\nu'',i,i,i) \\
& =\tau_{n-1} \tau_{n} (x_{n}-x_{n-1})e(\nu'',i,i,i) - \tau_{n}
\tau_{n-1} \tau_{n} (x_{n} - x_{n-1})^2e(\nu'',i,i,i).
\end{aligned}
\end{equation*}
Hence
\begin{equation*}
\begin{aligned}
& \x \tau_{1} \cdots \tau_{n} g_{n} \cdots
g_{1}e(i, \nu)  \\
& =  \x \tau_{1} \cdots \tau_{n} (x_{n}-x_{n-1})
g_{n-2} \cdots g_{1} (x_{n+1} -x_{1})e(i, \nu) \\
&\ \  - \ x \tau_{1} \cdots \tau_{n-2}
(\tau_{n} \tau_{n-1} \tau_{n}) (x_{n}-x_{n-1})^2 g_{n-2} \cdots
g_{1} (x_{n+1} - x_{1})e(i, \nu) \\
& =  \x \tau_{1} \cdots \tau_{n-1} g_{n-2}
\cdots g_{1} \tau_{n} (x_{n} -x_{1}) (x_{n+1} -x_{1}) e(i, \nu)\\
& \ \ - \tau_{n} x_{1}^{\lambda} \tau_{1} \cdots \tau_{n-1}
g_{n-2} \cdots g_{1} \tau_{n} (x_{n}-x_{1})^2  (x_{n+1} -x_{1}) e(i, \nu) \\
& = \x \tau_{1} \cdots \tau_{n-1} g_{n-2}
\cdots g_{1} (x_{n}-x_{1}) (x_{n+1} -x_{1}) \tau_{n}  e(i, \nu)\\
& \ \ -  \tau_{n} \x\tau_{1} \cdots \tau_{n-1} g_{n-2} \cdots g_{1}
(x_{n} -x_{1}) (x_{n+1} -x_{1}) \tau_{n} (x_{n}
- x_{1}) e(i, \nu)\\
& =  \x \tau_{1} \cdots \tau_{n-1} (x_{n}
-x_{n-1}) g_{n-2} \cdots g_{1} (x_{n+1} -x_{1}) \tau_{n}e(i, \nu) \\
& \ \ - \tau_{n} \x \tau_{1} \cdots \tau_{n-1} (x_{n} - x_{n-1})
g_{n-2} \cdots g_{1} (x_{n+1} - x_{1}) \tau_{n}
(x_{n} -x_{1}) e(i, \nu) \\
& \equiv A_{\nu'} (x_{n+1} -x_{n}) \tau_{n}  e(i, \nu)- \tau_{n} A_{\nu'} (x_{n+1}
- x_{1}) \tau_{n} (x_{n} - x_{1}) e(i, \nu) \\
& = A_{\nu'} \Bigl((x_{n+1} - x_{1}) \tau_{n} - \tau_{n} (x_{n+1} -
x_{1}) \tau_{n} (x_{n} - x_{1})\Bigr) e(i,\nu).
\end{aligned}
\end{equation*}
Note that
$$ \tau_{n} (x_{n+1} - x_{1})e(i,\nu)
=\bigl( (x_{n} - x_{1}) \tau_{n} + 1 \bigr)e(i,\nu)\quad
\text{and} \quad \tau_{n}^2e(i,\nu)  =0,$$ which implies $$\tau_{n} (x_{n+1}
- x_{1} ) \tau_{n}e(i,\nu) = \tau_{n}e(i,\nu),$$ and hence
$$\Bigl((x_{n+1} - x_{1}) \tau_{n} - \tau_{n} (x_{n+1} - x_{1}) \tau_{n}
(x_{n} - x_{1})\Bigr)e(i,\nu) =e(i,\nu).$$ Since $\nu = (\nu', i)$, we have $A_{\nu'} =
A_{\nu}$, which completes the proof.
\end{proof}

Note that $K_1$ is a projective right $\bl
R^\Lambda(\beta)\otimes\cor[t_i]\br$-module by
Lemma~\ref{lem:Kproj}, and the right multiplication by $\sum_{\nu\in
I^\beta} A_\nu e(i,\nu)\in R(\beta+\alpha_i)$ on $K_1$ is equal to
the action of $\sum_{\nu\in I^\beta} a_i^\Lambda(t_i) \prod_{1 \le a
\le n, \nu_{a} \neq i} Q_{i, \nu_{a}}(t_i, x_{a}) e(\nu)\in
R^\Lambda(\beta)\otimes\cor[t_i]$, which is a monic polynomial (up
to an invertible multiple). Hence $Q\circ P$ is injective by the
following elementary lemma whose proof is omitted.
\begin{Lem}\label{lem:inj}
Let $A$ be a  ring and $K$ a projective $A\otimes_{\Z}
\Z[t]$-module, where $t$ is an indeterminate. If $f=\sum_{0\le k\le
m}a_kt^k$ is an element of $A\otimes_{\Z} \Z[t]$ such that $a_m$ is
an invertible element of $A$, then the multiplication by $f$ on $K$
is an injective endomorphism of $K$.
\end{Lem}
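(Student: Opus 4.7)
The plan is to reduce the problem in two steps to checking injectivity of multiplication by a monic polynomial on the free rank-one module $A[t]$, where it will follow from an elementary degree argument. I will work with right multiplication throughout; the left-multiplication case is symmetric.

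First I would use invertibility of $a_m$ to strip it off: the map $\mu_{a_m}\cl K\to K$, $x\mapsto xa_m$, is a bijection of abelian groups with inverse $\mu_{a_m^{-1}}$, and setting $g\seteq a_m^{-1}f=t^m+a_m^{-1}a_{m-1}t^{m-1}+\cdots+a_m^{-1}a_0$, we have $f=a_m g$, so $\mu_f=\mu_g\circ\mu_{a_m}$. Hence $\mu_f$ is injective if and only if $\mu_g$ is, and we may assume $f$ is monic. Next, using projectivity, I would realize $K$ as a direct summand of a free right $A[t]$-module $F=\soplus_{i\in I}A[t]$, with inclusion $\sigma\cl K\hookrightarrow F$ and projection $\pi\cl F\to K$ satisfying $\pi\sigma=\id_K$, both maps of right $A[t]$-modules. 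Right multiplication by $f$ commutes with any right $A[t]$-module homomorphism, so $\sigma\circ\mu_f^K=\mu_f^F\circ\sigma$. If $xf=0$ in $K$, then $\sigma(x)f=0$ in $F$; once injectivity on $F$ is established, this forces $\sigma(x)=0$ and hence $x=\pi\sigma(x)=0$. Since $\mu_f$ acts diagonally on $F=\soplus_{i\in I}A[t]$, this reduces the problem to the case $K=A[t]$.

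Finally, I would handle $A[t]$ directly: for $h=\sum_{k=0}^n b_kt^k\in A[t]$ with $b_n\neq 0$, the product $hf$ is, by centrality of $t$ and monicity of $f$, of the form $b_nt^{n+m}$ plus terms of lower $t$-degree, hence nonzero. Therefore $\mu_f$ is injective on $A[t]$, which completes the argument. I expect no real obstacle; the only mild subtlety worth flagging is being careful about sides, since $A$ need not be commutative and $\mu_f$ is in general only a morphism of right $A[t]$-modules (equivalently, of abelian groups) rather than an endomorphism on both sides.
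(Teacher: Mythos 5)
The paper omits the proof of this lemma (calling it ``elementary''), so there is no canonical argument to compare against; but your proof is correct. The reduction to the free case is the right move: the key observation, which you make explicitly, is that right multiplication by $f$ need not be an $A[t]$-module endomorphism (nor is $\mu_{a_m}$) when $A$ is non-commutative, but right multiplication by any element does commute with every right $A[t]$-linear map --- which is exactly what is needed to push an equation $xf=0$ through $\sigma$ into the free module and back via $\pi$. The final degree argument on $A[t]=\soplus_n At^n$ is sound: the coefficient of $t^{n+m}$ in $hf$ is $b_na_m$, nonzero whenever $b_n\neq0$ since $a_m$ is a unit (note that stripping off $a_m$ to make $f$ monic is harmless but unnecessary --- invertibility of the top coefficient suffices directly). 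One could equally well argue by filtering $A[t]$ by degree and noting that the associated graded map is injective, but your version is as short as it gets.
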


Hence we obtain the following lemma.

\begin{Lem} \label{lem:resolution}
The homomorphism $P\cl K_{1} \longrightarrow K_{0} $ is injective, and
\begin{equation} \label{eq:exact}
0 \longrightarrow K_{1} \overset{P} \longrightarrow K_{0}
\overset{\pi} \longrightarrow F^{\Lambda} \longrightarrow 0
\end{equation}
is an exact sequence of $(R(\beta+\alpha_i),
R^{\Lambda}(\beta)\otimes\cor[t_i])$-bimodules.
\end{Lem}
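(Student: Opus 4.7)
The plan is to combine three facts, most of which have already been assembled in the build-up to the lemma, reducing the new content to a single injectivity argument. First, the surjectivity of $\pi$ is immediate from construction, since $\pi$ is the canonical projection from $K_0$ onto its further quotient $F^\Lambda$. Second, the equality $\on{Im}(P) = \Ker(\pi)$ has already been verified via \eqref{eq:Ppi} together with Lemma~\ref{lem:R(beta+alpha_i)}(b); since $P$ is obtained from $\widetilde{P}$ after passing to the quotient $K_1$, this equality carries over unchanged, giving exactness at $K_0$.

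The remaining step, and the only substantial one, is to prove that $P$ is injective. My plan is to exploit the map $Q\cl K_0 \to K_1$ from Theorem~\ref{thm:Q} and show that the composite $Q\circ P$ is already injective. By Theorem~\ref{thm:Q}, $Q\circ P$ acts on each component $e(i,\nu)K_1$ as right multiplication by $A_\nu = a_i^\Lambda(x_1)\prod_{\nu_a \neq i} Q_{i,\nu_a}(x_1, x_{a+1})$. Identifying the right action of $x_1$ on $K_1$ with the central indeterminate $t_i$, the element $\sum_\nu A_\nu e(i,\nu)$ becomes a polynomial in $t_i$ with coefficients in $R^\Lambda(\beta)$. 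Its leading coefficient in $t_i$ is (up to a sign and some factors coming from $e(i,\nu)$) the product of the $t_{i,\nu_a;-a_{i,\nu_a},0}$ over indices with $\nu_a\neq i$, which is invertible in $\cor_0$ by the normalization hypothesis on the $Q_{ij}$, and $a_i^\Lambda$ is itself monic; thus the leading coefficient of the full polynomial is an invertible element of $R^\Lambda(\beta)$.

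Since $K_1$ is projective as a right $R^\Lambda(\beta)\otimes\cor[t_i]$-module by Lemma~\ref{lem:Kproj}, I then invoke Lemma~\ref{lem:inj} (with $A = R^\Lambda(\beta)$, $t = t_i$) to conclude that the right multiplication by $\sum_\nu A_\nu e(i,\nu)$ is injective on $K_1$. Consequently $Q\circ P$ is injective, and hence so is $P$. Together with the surjectivity of $\pi$ and exactness at $K_0$, this yields the desired short exact sequence, and the $(R(\beta+\alpha_i),R^\Lambda(\beta)\otimes\cor[t_i])$-bimodule structure of $P$ and $\pi$ has already been recorded. The technical heart of the argument has been carried entirely by Theorem~\ref{thm:Q}; the only residual task is the bookkeeping check that the leading term of $\sum_\nu A_\nu e(i,\nu)$ remains a unit after passing to $R^\Lambda(\beta)\otimes\cor[t_i]$, so I do not anticipate any further obstacle.
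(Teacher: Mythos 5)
Your proposal is correct and follows essentially the same route as the paper: surjectivity of $\pi$ and the equality $\on{Im}(P)=\Ker\pi$ come for free from the earlier construction, and injectivity of $P$ is deduced by showing that $Q\circ P$ is right multiplication by a polynomial in $t_i$ with invertible leading coefficient and then invoking Lemma~\ref{lem:inj} via the projectivity of $K_1$ over $R^\Lambda(\beta)\otimes\cor[t_i]$.
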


Recall that the action of $t_i$ is the right multiplication by $x_1$
(resp.\ $x_{n+1}$) on $K_1$ (resp.\ on $K_0$ and $F^\Lambda$). Since
both $K_{1}$ and $K_{0}$ are projective right
$R^{\Lambda}(\beta)[t_i]$-modules  by Lemma~\ref{lem:Kproj},
\eqref{eq:exact} is a projective resolution of $F^{\Lambda}$ as a
right $R^{\Lambda}(\beta)[t_i]$-module.

By Lemma~\ref{lem:integrable}, there exists a monic polynomial
$g(u)$ such that  $g(x_{n+1})=0$ as an element of
$R^\Lambda(\beta+\alpha_i)$.  Hence Theorem~\ref{th:proj} follows
from Lemma~\ref{lem:pro} below, and Theorem~\ref{th:main} is its
consequence. Note that the homomorphism $P$ is homogeneous of degree
$(\alpha_i\mid2\Lambda-\beta)$ and it induces the morphism
$q^{(\alpha_i|2\Lambda-\beta)}\bF_iM\To F_iM$ in
Theorem~\ref{th:main}.

\begin{Lem}\label{lem:pro}
Let $A$ be a ring and let $A[t]=A\otimes_{\Z} \Z[t]$ be the
polynomial ring in $t$ with coefficients in $A$. Let $a(t)$ be a
monic polynomial in $t$ with coefficients in the center $Z(A)$ of
$A$ and let $M$ be an $A[t]$-module such that $a(t)M=0$. If $M$ has
a projective dimension at most $1$ as an $A[t]$-module, that is, if
there exists an exact sequence of $A[t]$-modules \eq &&0\To
P_1\To[\;j\;] P_0\To M\To0\label{eq:resol} \eneq with projective
$A[t]$-modules $P_0$ and $P_1$, then $M$ is a projective $A$-module.
\end{Lem}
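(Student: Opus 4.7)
The plan is to convert the hypothesis $\on{pd}_{A[t]} M \le 1$ into projectivity over $A$ by passing through the quotient $B := A[t]/(a(t))$. First, I would exploit that $a(t)$ is monic, hence a non-zero-divisor in $A[t]$, so $B$ admits the length-one $A[t]$-resolution $0 \to A[t] \xrightarrow{a(t)} A[t] \to B \to 0$. Tensoring with $M$ and using $a(t)M = 0$ identifies $\operatorname{Tor}^{A[t]}_0(M,B) = M/a(t)M = M$ and $\operatorname{Tor}^{A[t]}_1(M,B) = \Ker(a(t)\cdot\cl M\to M) = M$, with higher $\operatorname{Tor}$ vanishing. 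Feeding this into the Tor long exact sequence obtained by tensoring $0 \to P_1 \to P_0 \to M \to 0$ with $B$ over $A[t]$, and using the projectivity of $P_0$ and $P_1$, I expect to extract a four-term exact sequence
\begin{equation*}
0 \to M \to P_1/a(t)P_1 \to P_0/a(t)P_0 \to M \to 0.
\end{equation*}

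Next I would verify that $P_0$, $P_1$, and the two quotients $Q_i := P_i/a(t)P_i$ are all projective as $A$-modules. For $P_i$, this follows because $A[t]$ is free over $A$, so any $A[t]$-projective module is a summand of a free $A$-module. For $Q_i$, I would observe that $Q_i = P_i \otimes_{A[t]} B$ is $B$-projective; and since $a(t)$ is monic, $B$ is free of rank $\deg a(t)$ over $A$, so any $B$-projective module is $A$-projective. In particular, the given resolution becomes an $A$-projective resolution of $M$ of length at most one, forcing $\operatorname{Ext}^i_A(M, X) = 0$ for all $i \ge 2$ and all $A$-modules $X$.

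To finish, I would split the four-term sequence as $0 \to M \to Q_1 \to K \to 0$ and $0 \to K \to Q_0 \to M \to 0$, where $K$ is the image of $Q_1 \to Q_0$. Applying $\operatorname{Ext}^*_A(-, X)$ to the second short exact sequence and using that $Q_0$ is $A$-projective yields $\operatorname{Ext}^i_A(K, X) \cong \operatorname{Ext}^{i+1}_A(M, X) = 0$ for all $i \ge 1$, so $K$ is $A$-projective. Consequently the first short exact sequence splits (its quotient $K$ being projective), producing an $A$-module isomorphism $Q_1 \cong M \oplus K$; since $Q_1$ is $A$-projective, its direct summand $M$ is $A$-projective as well.

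The main conceptual step is recognising that one should tensor with $B$ to manufacture the four-term sequence, which effectively trades a length-one $A[t]$-resolution for the splitting of $M$ off a projective $A$-module. A more direct attempt to build an $A$-linear retraction $P_0 \to P_1$ from the pseudo-inverse $\phi$ satisfying $j\phi = a(t)\cdot\operatorname{id} = \phi j$ runs into the obstruction that $a(t)$ is not invertible on the projective modules, and it is not obvious how to choose an $A$-complement of $j(P_1)$ in $P_0$ by hand. Once the four-term sequence is produced, the remainder is standard $\operatorname{Ext}$ bookkeeping, resting on the twin inputs that $a(t)$ is a non-zero-divisor on $A[t]$-projective modules and that $B$ is free over $A$.
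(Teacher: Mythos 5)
Your proposal is correct, and it takes a genuinely different route from the paper. You tensor the resolution with $B=A[t]/(a(t))$, extract the four-term exact sequence $0\to M\to P_1/a(t)P_1 \to P_0/a(t)P_0\to M\to 0$, observe that all four outer objects and the $P_i$ themselves are $A$-projective (since $A[t]$ and $B$ are $A$-free), deduce $\operatorname{pd}_A M\le 1$ from the original resolution, and then run an $\operatorname{Ext}$ argument to split off $M$ from $P_1/a(t)P_1$. All the steps check out: the computation of $\operatorname{Tor}^{A[t]}_\bullet(B,M)$ via the free resolution $0\to A[t]\xrightarrow{a(t)}A[t]\to B\to 0$ uses that $a(t)$ is central and monic (hence a non-zero-divisor), and the vanishing $\operatorname{Tor}^{A[t]}_1(B,P_i)=0$ uses flatness of the $P_i$.

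The paper's proof is more hands-on and produces the $A$-linear retraction you thought was hard to build directly. It lifts the endomorphism ``multiplication by $a(t)$'' of $P_0$ through the monomorphism $j$ to get $h\cl P_0\to P_1$ with $j\circ h=a_0$, and then, since $j$ is mono, also $h\circ j=a_1$. The key extra input is exactly what you identify as the obstruction: one does not need $a(t)$ to be invertible, only that the short exact sequence $0\to P_1\xrightarrow{a_1}P_1\to P_1/a_1P_1\to 0$ splits \emph{as $A$-modules}, which holds because $P_1/a_1P_1\simeq B\otimes_{A[t]}P_1$ is $A$-projective. This gives an $A$-linear $p$ with $p\circ a_1=\operatorname{id}_{P_1}$, and then $p\circ h$ is the desired $A$-linear retraction of $j$, splitting the original sequence over $A$. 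So the ``$A$-complement of $j(P_1)$'' you worried about is exactly $\ker(p\circ h)$.

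In short: both arguments rest on the same two inputs (injectivity of $a(t)$ on $A[t]$-projectives, i.e.\ Lemma~\ref{lem:inj}, and $A$-freeness of $B$), but the paper packages them into an explicit retraction while you package them into the Tor/Ext machinery. The paper's route is shorter and avoids derived functors; yours is a bit more systematic and would generalize more readily to higher projective dimension if one only wanted a bound on $\operatorname{pd}_A M$ rather than projectivity itself.
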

\Proof
Let us take an exact sequence as in \eqref{eq:resol},
and let $a_i\in\End_{A[t]}(P_i)$ ($i=0,1$) be the endomorphism
of $P_i$ obtained by the multiplication of $a(t)$.
By Lemma~\ref{lem:inj} we have an exact sequence
\eq
&&0\to P_i\To[a_i]P_i\to P_i/a_iP_i\to0.
\label{eq:extseq}
\eneq
On the other hand, we have a commutative diagram with exact rows:
\eqn
&&\xymatrix{
0\ar[r]&P_1\ar[r]^j\ar[d]^{a_1}&P_0\ar@{.>}[ld]|h\ar[d]^{a_0}
\ar[r]&M\ar[d]^{a(t)=0}\ar[r]&0
\\
0\ar[r]&P_1\ar[r]_j\ar@{.>}@/^1.5pc/[u]|p& P_0 \ar[r]&M\ar[r]&0 }
\eneqn Since $a(t)\vert_M=0$, there exists an $A[t]$-linear
homomorphism $h$ such that $a_0=j\circ h$. Hence, we have $j\circ
(h\circ j)= a_0\circ j=j\circ a_1$. Since $j$ is a monomorphism, we
obtain $h\circ j=a_1$.

Since $A[t]/a_iA[t]$ is a projective $A$-module and
$P_i$ is a projective $A[t]$-module,
$P_i/a_iP_i\simeq (A[t]/a_iA[t])\otimes_{A[t]}P_i$ is a projective $A$-module.
Hence \eqref{eq:extseq} splits as an exact sequence of $A$-modules.
Hence there exists $p\in \End_A(P_1)$ such that $p\circ a_1=\id_{P_1}$.
Hence $p\circ h\circ j=\id_{P_1}$.
Therefore the exact sequence \eqref{eq:resol} splits,
and $M$ is a projective $A$-module.
\QED

\bigskip
We need the following lemma later.
\begin{Lem}\label{lem:ABP}
Set
\eqn
&&\mathsf{A}=\sum_{\nu\in I^\beta}A_\nu e(i,\nu),\\
&&\mathsf{B}=\sum_{\nu\in I^\beta}
 a_i^\Lambda(x_{n+1}) \prod_{\nu_{a}\neq i} Q_{\nu_{a}, i}(x_{a}, x_{n+1})e(\nu,i).\eneqn
Then we have a commutative diagram
\eqn &&\xymatrix@C=7ex{
K_1\ar[r]^P\ar[d]_{\mathsf{A}}
&K_0\ar[d]^{\mathsf{B}}\ar[dl]|Q\\
K_1\ar[r]_P&K_0\,. } \eneqn
Here the vertical arrows are the right
multiplications by $\mathsf{A}$ and $\mathsf{B}$, respectively.
\end{Lem}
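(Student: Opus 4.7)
The lemma asserts two commuting triangles. The upper-left one, $Q \circ P = \mathsf{A}$ on $K_1$, is immediate from Theorem~\ref{thm:Q}: on the $e(i,\nu)$-component of $K_1$, the composition $Q \circ P$ is right multiplication by $\x \tau_1 \cdots \tau_n g_n \cdots g_1$, which that theorem identifies with $A_\nu$ modulo the defining ideal $R(\beta+\alpha_i)\x[2] R^1(\beta)e(i,\nu)$ of $K_1$. Summing over $\nu \in I^\beta$ yields the identification of $Q \circ P$ with right multiplication by $\mathsf{A}$.

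For the lower-right triangle $P \circ Q = \mathsf{B}$ on $K_0$, the plan is to establish, for each $\nu\in I^\beta$, the parallel congruence
$$g_n \cdots g_1 \, \x \, \tau_1 \cdots \tau_n \, e(\nu, i) \,\equiv\, B_\nu\, e(\nu, i) \pmod{R(\beta + \alpha_i) \, \x \, R(\beta) \, e(\beta, i)},$$
where $B_\nu = a_i^\Lambda(x_{n+1}) \prod_{\nu_a \neq i} Q_{\nu_a, i}(x_a, x_{n+1})$. I would prove this by induction on $n=|\beta|$, mirroring the proof of Theorem~\ref{thm:Q}. The key computational inputs are the ``reversed'' analogues of \eqref{eq:tngn}, namely
$$g_n \tau_n e(\nu, i) = \begin{cases} Q_{\nu_n, i}(x_n, x_{n+1})\, e(\nu, i) & \text{if } \nu_n \neq i, \\ (x_{n+1} - x_n)\, \tau_n\, e(\nu', i, i) & \text{if } \nu_n = i, \end{cases}$$
together with the intertwiner relations $g_a x_b = x_{s_a(b)} g_a$ and $\tau_a g_{a+1} g_a = g_{a+1} g_a \tau_{a+1}$ from Lemma~\ref{lem:ga}. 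The case $\nu_n \neq i$ pulls a factor $Q_{\nu_n, i}(x_n, x_{n+1})$ out to the right and reduces to the induction hypothesis for $\nu'$. The case $\nu_n = i$ further splits on whether $\nu_{n-1}=i$ and uses the braid relation \eqref{eq:tau3} to replace $\tau_{n-1}\tau_n\tau_{n-1}$ by $\tau_n\tau_{n-1}\tau_n + \overline{Q}_{n-1,n,n+1}$.

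The main obstacle is the subcase $\nu_n=\nu_{n-1}=i$, where one must manipulate the $(x_{n+1}-x_n)\tau_n$ contribution of $g_n$ against the $(x_n-x_{n-1})^2$ factor from the explicit form of $g_{n-1}$ and the $\overline{Q}$-term produced by the braid move, then telescope the resulting $Q$-factors into the product defining $B_\nu$; the book-keeping is delicate but structurally identical to case (b) of the proof of Theorem~\ref{thm:Q}. As an alternative conceptual route, one could exploit the right $(R(\beta)\otimes\cor[t_i])$-linearity of $P$ to observe that $P\circ \mathsf{A} = \mathsf{B}\circ P$ (since $\mathsf{A}$ and $\mathsf{B}$ correspond to the same element of $R(\beta)\otimes\cor[t_i]$ via $Q_{i,j}(u,v)=Q_{j,i}(v,u)$), whence $(P\circ Q-\mathsf{B})\circ P=0$; but since $P$ is only injective, not surjective, this only pins down the identity on $\on{Im}(P)$ and a supplementary argument on the quotient $F^\Lambda$ would still be needed, so the direct inductive computation remains the cleanest approach.
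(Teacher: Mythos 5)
Your treatment of the upper triangle is correct and matches the paper: $Q\circ P=\mathsf{A}$ is precisely Theorem~\ref{thm:Q} restated in terms of right multiplications. Your observation that $P\circ\mathsf{A}=\mathsf{B}\circ P$ follows from the $\bigl(R(\beta)\otimes\cor[t_i]\bigr)$-bilinearity of $P$, because $\mathsf{A}$ and $\mathsf{B}$ are the images of one and the same central element of $R(\beta)\otimes\cor[t_i]$ under the two right-module structures on $K_1$ and $K_0$, is also correct.

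Where you go astray is in dismissing your ``alternative conceptual route'' as requiring a harder supplementary argument than the direct inductive computation. That alternative route is in fact exactly what the paper does, and the supplementary step is short. From $(P\circ Q-\mathsf{B})\circ P=0$ and the exact sequence \eqref{eq:exact}, the $R^\Lambda(\beta)[t_i]$-linear endomorphism $P\circ Q-\mathsf{B}$ of $K_0$ vanishes on $\on{Im}(P)=\Ker\pi$, hence factors as $K_0\twoheadrightarrow F^\Lambda\to K_0$. By Lemma~\ref{lem:integrable} the module $F^\Lambda$ is annihilated by some monic polynomial $g(t_i)$, so the image of $P\circ Q-\mathsf{B}$ inside $K_0$ is annihilated by $g(x_{n+1})$. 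But $K_0$ is a projective right $R^\Lambda(\beta)[t_i]$-module (Lemma~\ref{lem:Kproj}), so by Lemma~\ref{lem:inj} the action of the monic $g(x_{n+1})$ on $K_0$ is injective. Therefore $P\circ Q-\mathsf{B}=0$. That is the whole argument.

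By contrast, your primary route --- a fresh induction establishing $g_n\cdots g_1\,\x\,\tau_1\cdots\tau_n\,e(\nu,i)\equiv B_\nu e(\nu,i)$ modulo $R(\beta+\alpha_i)\x R(\beta)e(\beta,i)$ --- is only sketched, with the hardest subcase $\nu_n=\nu_{n-1}=i$ explicitly deferred. Nothing in the sketch is wrong, and the reversed analogue of \eqref{eq:tngn} you state is correct, but a proof was not produced; as it stands this is a gap. The sensible repair is not to push the induction through but to keep your square and upper-triangle arguments and then supply the two-line torsion argument above in place of the lengthy calculation. If you do insist on the direct calculation, note also that it would subsume the square (since $P\circ\mathsf{A}=P\circ Q\circ P=\mathsf{B}\circ P$ once both triangles are known), so you should not present the square as an independent ingredient of that version.
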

\Proof
We know already the commutativity of the upper triangle
and the square.
Let us show the commutativity of the lower triangle.
We have $P\circ Q\circ P=P\circ \mathsf{A}=\mathsf{B}\circ P$.
Hence $(P\circ Q-\mathsf{B})\circ P=0$.
By the exact sequence \eqref{eq:exact},
$P\circ Q-\mathsf{B}\in \End_{R^\Lambda(\beta)[t_i]}(K_0)$
factors through $F^\Lambda$.
Since $F^\Lambda$ is annihilated by the right multiplication by
a monic polynomial $g(t_i)$,
the image of $P\circ Q-\mathsf{B}$
is also  annihilated by $g(x_{n+1})$.
Since $g(x_{n+1})\cl K_0\to K_0$ is a monomorphism, we obtain
$P\circ Q-\mathsf{B}=0$.
\QED

\begin{Rem}\label{rem:proj}\bnum
\item
Both $\sum\limits_{\nu\in I^\beta}\prod\limits_{\nu_a=i}(t_i-x_a)e(\nu)$
and $\sum\limits_{\nu\in I^\beta}\prod\limits_{\nu_a\not=i}Q_{i,\nu_a}(t_i,x_a)e(\nu)$
belong to the center of $R(\beta)\otimes\cor[t_i]$
(cf.\ \cite[Theorem 2.9]{KL09}).
\item
For any $n\ge m$, $R^\Lambda(n)$ is a projective
$R^\Lambda(m)$-module. Indeed, $R^\Lambda(m+1)$ is a projective
$R^\Lambda(m)$-module by Theorem~\ref{th:proj}, and the general case
follows by induction on $n$. In particular, $R^\Lambda(n)$ is a
projective $\cor$-module. \label{remark1}
\enum
\end{Rem}

\vskip 3em


\section{$\gsl_2$-categorification} \label{sec:V(Lambda)}

In this section, we will show that the functors $E_i^\Lambda$ and
$F_i^\Lambda$ satisfy certain commutation relations similar to those
between generators of the Lie algebra $\gsl_2$.

\subsection{Commutation relations between $E_{i}^{\Lambda}$
and  $F_i^\Lambda$} For the adjoint pair $(F_{i}^{\Lambda},
E_{i}^{\Lambda})$, consider the adjunction transformations
$\varepsilon\cl F_{i}^{\Lambda} E_{i}^{\Lambda} \rightarrow \Id$ and
$\eta\cl \Id \rightarrow E_{i}^{\Lambda} F_{i}^{\Lambda}$, and the
natural transformations $x_{E_{i}^{\Lambda}}$, $x_{F_{i}^{\Lambda}}$
as in Section \ref{sec:R}.  For example,
$x_{F_{i}^{\Lambda_{i}}}$ is given by the right multiplication by
$x_{n+1}$ on $R^{\Lambda}(\beta+\alpha_i)e(\beta, i)$.  Thus we
obtain  the following commutative diagram:

\begin{equation} \label{B:comm}
\ba{c}
\xymatrix{ \Hom_{R^{\Lambda}(\beta+\alpha_i)}(F_{i}^{\Lambda}(M), N)
\ar[d]^-{x_{F_i^{\Lambda}}}\ar[r]^-{\sim} &
\Hom_{R^{\Lambda}(\beta)}(M, E_{i}^{\Lambda}(N))
\ar[d]^-{x_{E_{i}^{\Lambda}}}
\\ \Hom_{R^{\Lambda}(\beta+ \alpha_i)} (F_{i}^{\Lambda}(M), N)
 \ar[r]^-{\sim} & \Hom_{R^{\Lambda}(\beta)}(M, E_{i}^{\Lambda}(N))\,.
 }\ea
\end{equation}

\begin{Thm} \label{thm:A}
For $i \neq j$, there exists a natural isomorphism
$$
q^{-(\alpha_i|\alpha_j)}F_{j}^{\Lambda} E_{i}^{\Lambda}
\isoto E_{i}^{\Lambda}F_{j}^{\Lambda}.$$
\end{Thm}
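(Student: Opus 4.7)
The plan is to derive the desired isomorphism by comparing two applications of the fundamental exact sequence \eqref{eq:fbarf} of Theorem~\ref{th:main}. Fix $N\in\Mod(R^\Lambda(\beta+\alpha_i))$ and set $M\seteq E_i^\Lambda N\in\Mod(R^\Lambda(\beta))$; I will exhibit a natural isomorphism $E_i^\Lambda F_j^\Lambda N\isoto q^{-(\alpha_i|\alpha_j)}F_j^\Lambda M$ by constructing two exact sequences whose leftmost two terms agree and whose right-hand terms are exactly the two sides of the sought equivalence.

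First, apply \eqref{eq:fbarf} with root $j$ and weight $\beta+\alpha_i$ to $N$ to obtain
$$0 \to q^{(\alpha_j|2\Lambda-\beta-\alpha_i)} \bF_j N \to F_j N \to F_j^\Lambda N \to 0,$$
and then apply the exact functor $E_i$. Since $i\neq j$, Theorem~\ref{thm:bF}(a) supplies a natural isomorphism $E_i\bF_j\isoto \bF_j E_i$ and Theorem~\ref{thm:E_i} supplies $E_iF_j\isoto q^{-(\alpha_i|\alpha_j)}F_jE_i$. Moreover $E_iF_j^\Lambda N$ naturally carries an $R^\Lambda(\beta+\alpha_j)$-module structure (since $a^\Lambda(x_1)$ lies in $R(\beta+\alpha_j)\subset R(\beta+\alpha_i+\alpha_j)$ and $F_j^\Lambda N$ is already annihilated by it), so it coincides with $E_i^\Lambda F_j^\Lambda N$. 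The result is an exact sequence
$$0 \to q^{(\alpha_j|2\Lambda-\beta-\alpha_i)} \bF_j M \to q^{-(\alpha_i|\alpha_j)} F_j M \to E_i^\Lambda F_j^\Lambda N \to 0.$$
On the other hand, applying \eqref{eq:fbarf} to $M$ directly with root $j$ and weight $\beta$ and shifting the whole sequence by $q^{-(\alpha_i|\alpha_j)}$ yields
$$0 \to q^{(\alpha_j|2\Lambda-\beta-\alpha_i)} \bF_j M \to q^{-(\alpha_i|\alpha_j)} F_j M \to q^{-(\alpha_i|\alpha_j)} F_j^\Lambda M \to 0,$$
where I have used the identity $(\alpha_j|2\Lambda-\beta)-(\alpha_i|\alpha_j)=(\alpha_j|2\Lambda-\beta-\alpha_i)$. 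Provided the two inclusions on the left agree, the cokernels are canonically isomorphic, completing the proof.

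The main obstacle is precisely this last compatibility check. The inclusion in the second sequence is, at the level of bimodule kernels, the $P$-map of Section~\ref{sec:RLambda}, given by right multiplication by $a_j^\Lambda(x_1)\tau_1\cdots\tau_{|\beta|}$. The inclusion in the first sequence is the image of the corresponding $P$-map for $N$ under left multiplication by $e(\beta,i)$, followed by the identifications coming from Corollary~\ref{cor:eRe} (underlying Theorem~\ref{thm:E_i}) and from the vanishing $e(\beta,i)R(|\beta|+1,1)\tau_{|\beta|+1}\cdots\tau_1\,e(j,-)=0$ for $i\neq j$ (underlying Theorem~\ref{thm:bF}(a)). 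I would verify their agreement by a direct bimodule computation tracing how the extra braid generator $\tau_{|\beta|+1}$ and the idempotent $e(\beta,i)$ interact through the decomposition $R(|\beta|+2)\simeq R(|\beta|+1)\otimes_{R(|\beta|)}R(|\beta|+1)\oplus R(|\beta|+1,1)$ of Proposition~\ref{prop:R(n)xR(n)}, which is the same decomposition from which both the $P$-map and the natural isomorphism of Theorem~\ref{thm:E_i} are extracted.
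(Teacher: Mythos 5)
Your approach is genuinely different from the paper's, and it is an interesting idea: you derive the isomorphism from the exact sequence of Theorem~\ref{th:main} by applying it twice (to $N$ with root $j$, then to $M=E_i^\Lambda N$), while the paper proves Theorem~\ref{thm:A} directly by exhibiting an equality of two-sided ideals in $e(n,i)R(n+1)e(n,j)$ (equation \eqref{eq:cyclorel}), starting from Corollary~\ref{cor:eRe} without invoking Theorem~\ref{th:main} at all. The degree bookkeeping in your argument checks out, and the observation that $E_i F_j^\Lambda N$ is literally $E_i^\Lambda F_j^\Lambda N$ (since $F_j^\Lambda N$ is already an $R^\Lambda$-module) is correct.

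However, there is a genuine gap, which you correctly flag but only sketch. Two short exact sequences with isomorphic kernels and isomorphic middle terms do \emph{not} automatically have isomorphic cokernels: one must prove that the square
$$\xymatrix{
q^{(\alpha_j|2\Lambda-\beta-\alpha_i)}E_i\bF_j N\ar[r]^-{E_i(P_N)}\ar[d]_{\sim}&E_i F_j N\ar[d]^{\sim}\\
q^{(\alpha_j|2\Lambda-\beta-\alpha_i)}\bF_j M\ar[r]_-{q^{-(\alpha_i|\alpha_j)}P_M}&q^{-(\alpha_i|\alpha_j)}F_j M}
$$
commutes, where the verticals are the isomorphisms from Theorem~\ref{thm:bF}(a) and Theorem~\ref{thm:E_i}. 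The top map is right multiplication by $\x[1]\tau_1\cdots\tau_{n+1}$ (with $n=|\beta|$), the bottom by $\x[1]\tau_1\cdots\tau_n$; the verticals both arise from the $x\otimes y\mapsto x\tau_{n+1}y$ identification of Proposition~\ref{prop:R(n)xR(n)} and Corollary~\ref{cor:eRe}. Chasing the extra $\tau_{n+1}$ through these identifications is a non-trivial relation-manipulation in $R(n+2)$, and nothing in the theorems you cite gives it to you for free (they are each about a single functor, not about compatibility of the associated kernel maps). In effect, carrying out the verification would reproduce computations of the same type and difficulty as the paper's proof of \eqref{eq:cyclorel}, so your route is not shorter once the gap is filled. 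It also leans on the much heavier Theorem~\ref{th:main} where the paper uses only the elementary Corollary~\ref{cor:eRe}. As written, then, the proposal is a promising outline of an alternative argument, but not yet a proof.
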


\begin{proof}
By Corollary \ref{cor:eRe}, there is an isomorphism
$$e(n,i) R(n+1) e(n,j) \overset{\sim} \longrightarrow R(n) e(n-1, j)
\otimes_{R(n-1)} e(n-1, i) R(n).$$
Applying the functor $R^{\Lambda}(n)\otimes_{R(n)}
\scbul \otimes_{R(n)} R^{\Lambda}(n)$, the
right-hand side yields
$$R^{\Lambda}(n) e(n-1, j) \otimes_{R^{\Lambda}(n-1)} e(n-1, i)
R^{\Lambda}(n) = F_{j}^{\Lambda} E_{i}^{\Lambda}R^{\Lambda}(n),$$ and the
left-hand side is equal to
$$\dfrac{ e(n,i) R(n+1) e(n,j)} {e(n,i) R(n) \x R(n+1)
e(n,j) + e(n,i) R(n+1) \x R(n) e(n,j)}.$$ Since
\begin{equation*}
\begin{aligned}
E_{i}^{\Lambda} F_{j}^{\Lambda}R^{\Lambda}(n) &= e(n,i)R^{\Lambda}(n+1)e(n,j)
\otimes_{R(n)} R^{\Lambda}(n) \\
& =\dfrac{e(n,i)R(n+1)e(n,j)}{e(n,i) R(n+1) \x R(n+1)
e(n,j)},
\end{aligned}
\end{equation*}
it suffices to show that
\begin{equation}
\begin{aligned}
& e(n,i) R(n+1)\x R(n+1) e(n,j) \\
& = e(n,i)R(n)\x R(n+1) e(n,j) + e(n,i) R(n+1)
\x R(n) e(n,j).
\end{aligned}
\label{eq:cyclorel}
\end{equation}
Indeed, we have
\begin{equation*}
\begin{aligned}
& R(n+1) \x R(n+1) = \sum_{a=1}^{n+1} R(n+1)
\x \tau_{a} \cdots \tau_{n} R(n,1) \\
& = R(n+1)\x R(n,1) + R(n+1) \x \tau_{1}
\cdots \tau_{n} R(n,1) \\
&= R(n+1) \x R(n) + \sum_{a=1}^{n+1} R(n,1) \tau_{n}
\cdots \tau_{a} \x \tau_{1} \cdots \tau_{n} R(n,1)\\
&=R(n+1) \x R(n) + R(n,1) \x R(n+1) +
R(n,1) \tau_{n} \cdots \tau_{1} \x \tau_{1} \cdots
\tau_{n} R(n,1) \\
& = R(n+1) \x R(n) + R(n) \x R(n+1) +
R(n,1) \tau_{n} \cdots \tau_{1} \x \tau_{1} \cdots
\tau_{n} R(n,1).
\end{aligned}
\end{equation*}
Since $i \neq j$, we have
\begin{equation*}
\begin{aligned}
& e(n,i) R(n,1) \tau_{n} \cdots \tau_{1} \x \tau_{1}
\cdots \tau_{n} R(n,1) e(n,j) \\
&=R(n,1) \tau_{n} \cdots \tau_{1} e(i,n) \x e(j,n)
\tau_{1} \cdots \tau_{n} R(n,1) =0,
\end{aligned}
\end{equation*}
which proves our assertion \eqref{eq:cyclorel}.
\end{proof}

\noindent
The natural transformation $q_{i}^{-2} F_{i} E_{i} \to E_{i}
F_{i}$ defined in Proposition~\ref{prop:R(n)xR(n)}
induces a natural transformation $q_{i}^{-2} F_{i}^{\Lambda}
E_{i}^{\Lambda} \to E_{i}^{\Lambda} F_{i}^{\Lambda}$.
Moreover, there exists a natural transformation
$$q_{i}^{2k} \Id\longrightarrow E_{i}^{\Lambda} F_{i}^{\Lambda} \quad (k\ge 0)$$
given by the following commutative diagram:
\begin{equation} \label{C:comm}
\ba{c} \xymatrix@C=15ex{\Id  \ar[d]_{ \eta }\ar[r]^-{\eta} &
E_{i}^{\Lambda} F_{i}^{\Lambda}  \ar[d]^-{E_{i}^{\Lambda} \circ \,
(x_{F_{i}^{\Lambda}})^{k}}
\\ E_{i}^{\Lambda} F_{i}^{\Lambda}
 \ar[r]_{ (x_{E_{i}^{\Lambda}})^k \circ \, F_{i}^{\Lambda}} &
 q_i^{-2k} E_{i}^{\Lambda} F_{i}^{\Lambda}\,.
 }\ea
\end{equation}
Note that the commutativity of \eqref{C:comm} follows from
\eqref{B:comm}. Similarly, there exists a natural transformation
$$F_{i}^{\Lambda} E_{i}^{\Lambda} \longrightarrow q_{i}^{-2k} \Id\quad(k \ge 0)$$
given by the following commutative diagram:
\begin{equation} \label{D:comm}
\ba{c} \xymatrix@C=15ex{q_i^{2k} F_{i}^{\Lambda} E_{i}^{\Lambda}
\ar[d]_{ F_{i}^{\Lambda} \circ \, (x_{E_{i}^{\Lambda}})^{k}
}\ar[r]^-{ (x_{F_{i}^{\Lambda}})^{k} \circ \, E_{i}^{\Lambda}} &
F_{i}^{\Lambda} E_{i}^{\Lambda} \ar[d]^-{\varepsilon}
\\ F_{i}^{\Lambda} E_{i}^{\Lambda}
 \ar[r]_{ \varepsilon } & \Id\,.
  }\ea
\end{equation}

Now we can state another main theorem of our paper.

\begin{Thm} \label{thm:M}
Let $\lambda = \Lambda - \beta$. Then there exist natural
isomorphisms of endofunctors on $\Mod(R^{\Lambda}(\beta))$ given
below.
\bna
\item If $\langle h_{i}, \lambda \rangle \ge 0$, then we have an isomorphism
$$q_{i}^{-2} F_{i}^{\Lambda} E_{i}^{\Lambda} \oplus
\bigoplus_{k=0}^{\langle h_{i}, \lambda \rangle -1} q_{i}^{2k} \Id
\isoto E_{i}^{\Lambda} F_{i}^{\Lambda}.$$
\item If $\langle h_{i}, \lambda \rangle \le 0$, then we have an isomorphism
$$q_{i}^{-2} F_{i}^{\Lambda} E_{i}^{\Lambda} \isoto E_{i}^{\Lambda} F_{i}^{\Lambda} \oplus
\bigoplus_{k=0}^{-\langle h_{i}, \lambda \rangle -1} q_{i}^{-2k-2}
\Id.$$
\end{enumerate}
\end{Thm}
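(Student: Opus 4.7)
My plan is to prove both cases simultaneously by applying the snake lemma to a $3\times 3$ commutative diagram of short exact sequences built from Theorem~\ref{th:main}, and then identifying the resulting connecting map explicitly. First, apply the exact functor $E_i$ to the fundamental exact sequence of Theorem~\ref{th:main} applied to $M$ to obtain
$$0\to q^{(\alpha_i|2\Lambda-\beta)}E_i\bF_iM\to E_iF_iM\to E_i^\Lambda F_i^\Lambda M\to 0;$$
second, apply Theorem~\ref{th:main} to $E_i^\Lambda M\in\Mod(R^\Lambda(\beta-\alpha_i))$ and shift the resulting sequence by $q_i^{-2}$ to obtain
$$0\to q^{(\alpha_i|2\Lambda-\beta)}\bF_iE_iM\to q_i^{-2}F_iE_iM\to q_i^{-2}F_i^\Lambda E_i^\Lambda M\to 0,$$
using the identity $q_i^{-2}\cdot q^{(\alpha_i|2\Lambda-\beta+\alpha_i)}=q^{(\alpha_i|2\Lambda-\beta)}$. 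Assemble these two rows into a commutative $3\times 3$ diagram whose vertical arrows are: the monomorphism $\bF_iE_iM\mono E_i\bF_iM$ from Theorem~\ref{thm:bF}(b) on the left; the direct-summand inclusion $q_i^{-2}F_iE_iM\mono E_iF_iM$ from Theorem~\ref{thm:E_i} in the middle; and the induced natural transformation $\alpha_M\cl q_i^{-2}F_i^\Lambda E_i^\Lambda M\to E_i^\Lambda F_i^\Lambda M$ on the right. Commutativity of the left square follows from tracing the explicit formulas (right multiplication by $\x\tau_1\cdots\tau_n$ together with the bimodule decompositions of Propositions~\ref{prop:R(n+1)}--\ref{prop:R^{1}} and Corollary~\ref{cor:varphi-1}).

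Since the two leftmost verticals are monomorphisms, the snake lemma produces the exact sequence
$$0\to\Ker\alpha_M\to q_i^{2\langle h_i,\lambda\rangle}M\otimes\cor[t_i]\xrightarrow{\;\psi\;}M\otimes\cor[t_i]\to\Coker\alpha_M\to 0,$$
in which the two cokernel identifications come from Theorem~\ref{thm:bF}(b) and Theorem~\ref{thm:E_i} together with $q^{2(\alpha_i|\lambda)}=q_i^{2\langle h_i,\lambda\rangle}$. The principal obstacle is the explicit identification of the connecting map $\psi$. By naturality it suffices to compute $\psi$ on the universal example $M=R^\Lambda(\beta)$; combining the $\cor[t_i]$-compatibility provided by Corollary~\ref{cor:varphi-1} (with $t_i$ acting as $x_1$ on the $\bF_i$-side and as $x_{n+1}$ on the $F_i$-side) with Lemma~\ref{lem:ABP}, one shows that $\psi$ is, up to a unit in $\cor_0^\times$, the degree-preserving $\cor[t_i]$-linear map $v\otimes t_i^j\mapsto v\otimes t_i^{j+\langle h_i,\lambda\rangle}$, interpreted as $0$ when $j+\langle h_i,\lambda\rangle<0$.

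With $\psi$ so identified, the two cases fall out. In case~(a) $\langle h_i,\lambda\rangle\ge 0$, $\psi$ is injective (multiplication by $t_i^{\langle h_i,\lambda\rangle}$) with cokernel $\bigoplus_{k=0}^{\langle h_i,\lambda\rangle-1}q_i^{2k}M$, so $\alpha_M$ is monic and the resulting short exact sequence
$$0\to q_i^{-2}F_i^\Lambda E_i^\Lambda M\to E_i^\Lambda F_i^\Lambda M\to\bigoplus_{k=0}^{\langle h_i,\lambda\rangle-1}q_i^{2k}M\to 0$$
splits via the natural transformations $q_i^{2k}\Id\to E_i^\Lambda F_i^\Lambda$ from diagram~\eqref{C:comm}. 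In case~(b) $\langle h_i,\lambda\rangle\le 0$, $\psi$ is instead surjective with kernel $\bigoplus_{k=0}^{-\langle h_i,\lambda\rangle-1}q_i^{-2k-2}M$, so $\alpha_M$ is epic and the sequence
$$0\to\bigoplus_{k=0}^{-\langle h_i,\lambda\rangle-1}q_i^{-2k-2}M\to q_i^{-2}F_i^\Lambda E_i^\Lambda M\to E_i^\Lambda F_i^\Lambda M\to 0$$
splits via the natural transformations $F_i^\Lambda E_i^\Lambda\to q_i^{-2k}\Id$ from diagram~\eqref{D:comm}. In each case, verifying that the prescribed maps genuinely give a complementary splitting is a short graded-piece computation using the unit/counit of the adjunction $(F_i^\Lambda,E_i^\Lambda)$.
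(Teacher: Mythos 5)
Your plan has exactly the right architecture and matches the paper's own proof: the paper assembles the two rows coming from Theorem~\ref{th:main} (once applied to $E_i^\Lambda M$ and then shifted, once applied to $M$ and fed through $E_i$), uses Theorems~\ref{thm:bF}(b) and \ref{thm:E_i} for the two short exact columns, and extracts from the snake lemma a four-term exact sequence
$$0\to\Ker A\to q_i^{2\langle h_i,\lambda\rangle}\,\cor[t_i]\otimes M\xrightarrow{\;A\;}\cor[t_i]\otimes M\to\Coker A\to 0,$$
culminating in the identification of $\Ker A$ and $\Coker A$.

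However, your identification of the connecting map $\psi=A$ is wrong as stated, and this is where the bulk of the work lives. You claim that $\psi$ is, up to a unit, the $\cor[t_i]$-linear map $v\otimes t_i^j\mapsto v\otimes t_i^{j+\langle h_i,\lambda\rangle}$. It is not: the paper points out explicitly (in the discussion after \eqref{F:comm}) that the cokernel map $C$ does \emph{not} commute with $t_i$, and hence $A$ is $R^\Lambda(\beta)$-bilinear but \emph{not} $\cor[t_i]$-linear. Concretely, setting $\varphi_k=A(t_i^k)$, the recursion \eqref{eq:recur} gives $\varphi_{k+1}=\varphi_k t_i+E(\psi_k)$, and the error terms $E(\psi_k)$ are generically nonzero --- for instance in case~(b) the paper shows $E(\psi_{a-1})=\gamma^{-1}\neq 0$. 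What \emph{is} true, and this is the content of Proposition~\ref{prop:A}, is that $\gamma\varphi_k$ is a monic polynomial in $t_i$ of degree $\langle h_i,\lambda\rangle+k$ (zero if that is negative). Proving this requires the computation using Lemma~\ref{lem:ABP} and the quasi-linearity estimate \eqref{eq:af}, which is the substantive technical core that your proposal does not supply. Once you have the monicity, the kernel and cokernel of $A$ come out as you claim, but only via a triangular-form argument, not because $A$ literally is multiplication by $t_i^{\langle h_i,\lambda\rangle}$.

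The same issue resurfaces in your treatment of the splitting in case~(b). You propose to split the short exact sequence by the natural transformations $F_i^\Lambda E_i^\Lambda\to q_i^{-2k}\Id$ from \eqref{D:comm} and assert that checking this is ``a short graded-piece computation.'' In the paper this is again where the non-$\cor[t_i]$-linearity must be confronted: one builds the candidate section $\Psi\cl\Ker A\to\bigoplus_{k=0}^{a-1}\cor t_i^k\otimes R^\Lambda(\beta)$, computes $\Psi(t_i^j)=\gamma^{-1}t_i^{a-1-j}+\sum_{a-1-j<k\le a-1}E(\psi_{j+k})t_i^k$, and then argues that this anti-triangular matrix with invertible anti-diagonal entries is invertible. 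The error terms $E(\psi_{j+k})$ --- exactly the terms you discarded by asserting $\cor[t_i]$-linearity --- must be shown to be harmless; that is why the paper needs the explicit formula. Without some version of this argument, the proposed proof has a real gap.
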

Note that in  \cite[{\S 4.1.3}]{R08} it is one of the axioms
for the categorification (see also \cite[Theorem 5.27]{CR08}).

\subsection{Proof of Theorem~\ref{thm:M}}
In order to prove this theorem, we consider the following
commutative diagrams with exact rows and columns for $M \in
\Mod(R^{\Lambda}(\beta))$ (see Theorem~\ref{thm:E_i},
Theorem~\ref{thm:bF} and Theorem~\ref{th:main}):
\begin{equation} \label{E:comm}\ba{c}
\xymatrix{
{} & 0 \ar[d] & 0 \ar[d]  & q_{i}^{-2}M & {} \\
0 \ar[r] & q^{(\alpha_i | 2 \Lambda - \beta)} \bF_{i} E_{i}
M \ar[d] \ar[r] & q_{i}^{-2} F_{i} E_{i} M \ar[d] \ar[r]
\ar[ur]^{\varepsilon} & q_{i}^{-2} F_{i}^{\Lambda}
E_{i}^{\Lambda} M  \ar[d] \ar[r] & 0 \\
0 \ar[r] & q^{(\alpha_{i}| 2 \Lambda - \beta)} E_{i}
\bF_{i} M \ar[d] \ar[r]  &
E_{i} F_{i} M \ar[d] \ar[r] & E_{i}^{\Lambda} F_{i}^{\Lambda} M  \ar[r] & 0 .\\
{} & q^{(\alpha_{i} | 2 \Lambda - 2\beta)} \cor[t_{i}] \otimes M
\ar[d] \ar[r] &
\cor[t_{i}] \otimes M \ar[d] & {} & {} \\
{} & 0 & 0 & {} & {}
  }\ea
\end{equation}

At the kernel level, the commutative diagram \eqref{E:comm}
corresponds to the following commutative diagram of $(R(\beta),
R^\Lambda(\beta))$-bimodules
\begin{equation} \label{F:comm}
\ba{c}
\xymatrix@C=3.5ex{
{} & 0 \ar[d] & 0 \ar[d]  & q_{i}^{-2} R^{\Lambda}(\beta) & {} \\
0 \ar[r] &  q^{(\alpha_i | 2 \Lambda - \beta)} K_{1}'
\ar[d] \ar[r]^{P'}  & q_{i}^{-2} K_{0}'
\ar[d]^{F=\tau_{n}}
 \ar[r]_-{G } \ar[ur]^{E}
&q_i^{-2} F_{i}^{\Lambda} E_{i}^{\Lambda} R^{\Lambda}(\beta)  \ar[d] \ar[r] & 0 \\
 0 \ar[r] & q^{(\alpha_{i}| 2 \Lambda -
\beta)}E_iK_{1}  \ar[d]_{B} \ar[r]^{P} &
E_iK_{0} \ar[d]^{C} \ar[r] &E_i F^{\Lambda}\ar[r] & 0 \\
{} & q^{(\alpha_{i} | 2 \Lambda - 2\beta)} \cor[t_{i}] \otimes
R^{\Lambda}(\beta) \ar[d] \ar[r]^-{A} &
\cor[t_{i}] \otimes R^{\Lambda}(\beta)  \ar[d] & {} & {} \\
{} & 0 & 0 & {} & {}
  }\ea
\end{equation}
Here, we have
\begin{equation} \label{eq:K'}
\begin{aligned}
 K_{0}' &\seteq F_iE_iR^\Lambda(\beta)= R(\beta) e(\beta-\alpha_{i}, i)
\otimes_{R(\beta-\alpha_{i})}e(\beta-\alpha_i,i) R^{\Lambda}(\beta), \\
K_{1}' &\seteq \bF_iE_iR^\Lambda(\beta)
=R(\beta) e(i, \beta-\alpha_{i})\otimes_{R(\beta-\alpha_{i})}
e(\beta-\alpha_i,i) R(\beta)\otimes_{R(\beta)}R^\Lambda(\beta)\\
&=R(\beta) e(i, \beta-\alpha_{i})\otimes_{R(\beta-\alpha_{i})}
e(\beta-\alpha_i,i)R^{\Lambda}(\beta),
\end{aligned}
\end{equation}
and
\begin{equation} \label{eq:E_iK}
\begin{aligned}
 F_{i}^{\Lambda} E_{i}^{\Lambda} R^{\Lambda}(\beta)&
= R^\Lambda(\beta) e(\beta-\alpha_{i}, i)
\otimes_{R(\beta-\alpha_{i})}e(\beta-\alpha_i,i) R^{\Lambda}(\beta),\\[.5ex]
 E_iF^{\Lambda}& =E_i^\Lambda F_i^\Lambda R^{\Lambda}(\beta)
=e(\beta, i)R^{\Lambda}(\beta+\alpha_i) e(\beta, i) \\
&= \dfrac{e(\beta, i)R(\beta+\alpha_i) e(\beta, i)}{e(\beta,
i)R(\beta+\alpha_i)
\x R(\beta+\alpha_i) e(\beta, i)}, \\[.5ex]
 E_iK_{0}&= E_i F_i R^{\Lambda}(\beta)
=e(\beta,i)R(\beta+ \alpha_i) e(\beta, i) \tens\limits_{R(\beta)}
R^{\Lambda}(\beta) \\
& = \dfrac{e(\beta,i)R(\beta+\alpha_i) e(\beta,i)}%
{e(\beta,i)R(\beta+\alpha_i)\x R(\beta) e(\beta, i)}, \\[.5ex]
 E_iK_{1}& =  E_i \bF_i R^{\Lambda}(\beta)
 =e(\beta,i)R(\beta+\alpha_i) e(i, \beta) \tens\limits_{R(\beta)}
R^{\Lambda}(\beta)\\ &= \dfrac{e(\beta,i)R(\beta+\alpha_i) e(i,\beta)}%
{e(\beta,i)R(\beta+\alpha_i) \x[2] R^{1}(\beta) e(i,
\beta)}.
\end{aligned}
\end{equation}
We also note the following properties:
\be[(1)]
\item The map $P$ is the right multiplication by $\x
\tau_{1} \cdots \tau_{n}$  given in Section \ref{sec:RLambda}, and is
$\bl R(\beta), R^{\Lambda}(\beta) \otimes \cor[t_{i}]\br$-bilinear.

\item
Similarly, the map $P'$ is given by the right multiplication by
$\x\tau_1\cdots\tau_{n-1}$ on $R(\beta) e(i, \beta-\alpha_{i})$.

\item  The map $E$ is given by $x\otimes y\mapsto xy$ 
and the map $F$ is given by
$x\otimes y\mapsto x\tau_ny$
(see Proposition \ref{prop:R(n)xR(n)}).

\item The map $C$ is the cokernel map of $F$. Hence $C$ is $(R(\beta),
R^{\Lambda}(\beta))$-bilinear but does {\it not} commute with
$t_{i}$.

\item The map $B$ is written as $\varphi$ in Corollary \ref{cor:varphi-1}.
Thus it is given by taking the coefficient of $\tau_{n} \cdots
\tau_{1}$ and is $(R(\beta) \otimes \cor[x_{n+1}], \cor[x_{1}]
\otimes R^\Lambda(\beta))$-bilinear.

\item  The map $A$ is defined by chasing the
diagram. It is $R^{\Lambda}(\beta)$-bilinear but {\it not}
$\cor[t_{i}]$-linear.
\item The map $G$ is the canonical projection, which is
$\bl R(\beta)\otimes\cor[x_{n+1}],R^\Lambda(\beta)\otimes\cor[x_{n+1}]\br$-bilinear.
\ee

\medskip
We write $p$ for the number of times $\alpha_{i}$ appears in
$\beta$. Define an invertible element $\gamma \in \cor^{\times}$ by
\eq\label{eq:gamma}&&\hs{.5ex}
(-1)^{p} \prod_{\nu_{a} \neq i} Q_{i, \nu_{a}}(t_{i}, x_{a}) =
\gamma^{-1} t_{i}^{-\langle h_{i}, \beta \rangle + 2p} +
\Bigl(\text{terms of degree $< - \langle h_{i}, \beta \rangle + 2p$
in $t_i$}\Bigr).
\eneq
For $\lambda=\Lambda-\beta$, define
\begin{equation} \label{eq:phi}
\varphi_{k} = A(t_{i}^k) \in
\cor[t_{i}] \otimes R^{\Lambda}(\beta),
\end{equation}
which is of
degree $2(\alpha_{i}| \lambda) + k(\alpha_{i} | \alpha_{i}) =
(\alpha_{i}|\alpha_{i}) (\langle h_i, \lambda \rangle + k)$.

We need the following proposition for the proof of Theorem~\ref{thm:M}.

\begin{Prop} \label{prop:A} \hfill
\bna
\item If $\langle h_{i}, \lambda \rangle + k <0$, then
$\varphi_{k}=0$.
\item If $\langle h_{i}, \lambda \rangle + k \ge 0$, then
$\gamma \varphi_{k}$ is a monic polynomial of degree $\langle h_i,
\lambda \rangle +k$ in $t_{i}$.
\end{enumerate}
\end{Prop}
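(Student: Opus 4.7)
The plan is to compute $\varphi_k=A(t_i^k)$ by an explicit diagram chase in \eqref{F:comm}. Since $B$ is surjective, we need to choose a $\cor$-linear lift of $t_i^k$ along $B$ and push it across via $P$ and $C$. The construction of $B$ in \eqref{eq:phi-2} via the isomorphism $\tau_n\cdots\tau_1 R(1,n)\cong\cor[x_1]\otimes R^1(n)$ makes $\sigma(t_i^k)=\tau_n\cdots\tau_1\,x_1^k\,e(i,\beta)\in E_iK_1$ a natural lift. Applying $P$ (right multiplication by $\x\,\tau_1\cdots\tau_n$), using $e(i,\beta)\x=a_i^\Lambda(x_1)e(i,\beta)$ and $e(i,\beta)\tau_1\cdots\tau_n=\tau_1\cdots\tau_n e(\beta,i)$, we obtain
\[
P(\sigma(t_i^k))=\tau_n\cdots\tau_1\,h_k(x_1)\,\tau_1\cdots\tau_n\,e(\beta,i)\in E_iK_0,
\]
where $h_k(x)=x^k a_i^\Lambda(x)$ is monic of degree $k+\langle h_i,\Lambda\rangle$, so that $\varphi_k=C(P(\sigma(t_i^k)))$.

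Next I will identify $C$ with a concrete projection and compute. By Theorem~\ref{thm:E_i} applied to $M=R^\Lambda(\beta)$ (equivalently, by Corollary~\ref{cor:eRe}), there is a direct sum decomposition $E_iK_0\cong q_i^{-2}K_0'\oplus(\cor[t_i]\otimes R^\Lambda(\beta))$, with $C$ the projection onto the second summand $e(\beta,i)R(\beta,1)e(\beta,i)\otimes_{R(\beta)}R^\Lambda(\beta)\simeq R^\Lambda(\beta)\otimes\cor[t_i]$ under $x_{n+1}\leftrightarrow t_i$, and with $\ker C=\operatorname{im}F$. Thus $\varphi_k$ is computed by commuting all $x$'s past all $\tau$'s in $P(\sigma(t_i^k))$, using $\tau_l f=(s_lf)\tau_l+(\partial_l f)e_{l,l+1}$ from \eqref{eq:partial}: any term in which a $\tau$ survives lands in the image of $F$ and is killed by $C$, so only those terms in which every $\tau$ has been annihilated contribute. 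The two mechanisms of annihilation are collisions $\tau_l^2=Q_{\nu_l,\nu_{l+1}}(x_l,x_{l+1})$ producing $Q$-factors (which after further commutation become $Q_{\nu_a,i}(x_a,x_{n+1})$ at position $a$ with $\nu_a\neq i$), and the collapse $\tau_l^2=0$ when $\nu_l=\nu_{l+1}=i$, which forces a $\partial_l$-correction.

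Executing this calculation (by induction on $n$, or, equivalently, by expanding around the leading-degree monomial $x_1^{k+\langle h_i,\Lambda\rangle}$ of $h_k$ and pushing it from position $1$ to position $n+1$), the surviving $\cor[t_i]\otimes R^\Lambda(\beta)$-component picks up the product $\prod_{\nu_a\neq i}Q_{\nu_a,i}(x_a,t_i)$ from the $(n-p)$ $Q$-collisions and a factor $(-1)^p$ from the $p$ $\partial$-corrections at positions where $\nu_a=i$; each of these $p$ corrections reduces the effective $t_i$-degree by $2$ (since a $\partial$ replaces a would-be $Q_{i,i}=0$ of $t_i$-degree $2$). Consequently the top $t_i$-degree of $\varphi_k e(\nu)$ equals $(k+\langle h_i,\Lambda\rangle)+(-\langle h_i,\beta\rangle+2p)-2p=k+\langle h_i,\lambda\rangle$, and by \eqref{eq:gamma} its leading coefficient is exactly $\gamma^{-1}$, proving (b); when $k+\langle h_i,\lambda\rangle<0$, no contribution to the $\cor[t_i]\otimes R^\Lambda(\beta)$-component survives (every candidate term has too high an order of $\partial$-correction and vanishes), giving (a). The main obstacle is the bookkeeping of step~(iii): organising the interaction of the $p$ $\partial$-collapses with the $(n-p)$ $Q$-contributions, for which Lemma~\ref{lem:ABP} (relating $P\circ Q$ to right multiplication by $\mathsf{B}$, whose leading $t_i$-term $(-1)^p\gamma^{-1}t_i^{\langle h_i,\lambda\rangle+2p}$ is known explicitly) may be used to cross-check the leading coefficient.
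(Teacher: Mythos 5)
Your framing is right: $\varphi_k=A(t_i^k)$ can be computed by lifting $t_i^k$ along $B$, pushing through $P$, and projecting via $C$; and the lift $\sigma(t_i^k)=\tau_n\cdots\tau_1\,x_1^k\,e(i,\beta)$ is the natural one, giving $P(\sigma(t_i^k))=\tau_n\cdots\tau_1\,h_k(x_1)\,\tau_1\cdots\tau_n\,e(\beta,i)$ with $h_k(x)=x^k a_i^\Lambda(x)$. But everything after ``Executing this calculation'' is asserted, not proved, and the asserted degree count is exactly where the argument breaks. At a position $a$ with $\nu_a=\nu_{a+1}=i$, the collision gives $\tau_a f\tau_a e(\nu)=(\partial_a f)\tau_a e(\nu)$: the $\tau_a$ does \emph{not} disappear. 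The residual $\tau_a$ must then be slid past $\tau_{a+1},\ldots,\tau_n$ on both sides, generating braid and $\overline Q_{a,a+1,a+2}$ terms that must in turn be collected or killed, so there is no single-pass normal-form extraction of the $\cor[t_i]\otimes R^\Lambda(\beta)$-component. The claim that ``each of these $p$ corrections reduces the effective $t_i$-degree by $2$'' is not justified: a single $\partial_a$ drops $t_i$-degree by only $1$, and the phantom $Q_{ii}$ you invoke would, by the degree constraint in \eqref{eq:Q}, have $t_i$-degree $-a_{ii}=-2$, not $2$.

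The paper sidesteps this cascade by choosing a smarter lift: $g_n\cdots g_1\,x_1^k\,e(i,\nu)$, built from the intertwiners $g_a$ of \eqref{eq:ga}, whose extra $-(x_{a+1}-x_a)^2\tau_a$ term is designed to absorb exactly the problematic collision. With this lift, $B(g_n\cdots g_1 x_1^k e(i,\nu))=t_i^k\prod_{\nu_a=i}\bigl(-(t_i-x_a)^2\bigr)e(\nu)$ is read off immediately, and $CP(g_n\cdots g_1 x_1^k e(i,\nu))$ is given in closed form by Lemma~\ref{lem:ABP} (whose proof rests on the inductive Theorem~\ref{thm:Q}), yielding the identity \eqref{eq:AA}, i.e.\ $A(t_i^k S)=\gamma^{-1}t_i^k F$ with $S=\sum_\nu\prod_{\nu_a=i}(t_i-x_a)^2 e(\nu)$ monic of $t_i$-degree $2p$. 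This only pins down $A$ on multiples of $S$, not on $t_i^k$ itself, so the paper adds one more step: Lemma~\ref{lem:A} uses the recurrence $\varphi_{k+1}=\varphi_k t_i+E(\psi_k)$ to show $A(af)-A(a)f$ has $t_i$-degree $<\deg f$, from which $\gamma\varphi_k$ is the quotient of the monic $t_i^k F$ by the monic $S$. That polynomial division is what produces the $-2p$ your degree count was reaching for. So Lemma~\ref{lem:ABP} is not a ``cross-check''; together with the division argument of Lemma~\ref{lem:A}, it \emph{is} the proof, and those are the two pieces your sketch is missing.
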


{}From now on, {\em a monic polynomial of degree $<0$ will be understood to be}
0. To prove Proposition \ref{prop:A}, we need some preparation.

\begin{Lem} \label{lem:P}
For $z \in K_{0}'= R(\beta) e(\beta-\alpha_{i}, i)
\otimes_{R(\beta-\alpha_i)} e(\beta-\alpha_i, i)
R^{\Lambda}(\beta)$, we have
\begin{equation} \label{eq:lemP}
F(z) x_{n+1} = F(z(x_{n} \otimes 1)) + E(z),
\end{equation}
where $E(z) \in R^{\Lambda}(\beta) \subset
 e(\beta,i)R(\beta+\alpha_i) e(\beta,i)\otimes_{R(\beta)} R^{\Lambda}(\beta)
 =E_{i}K_{0}$.
\end{Lem}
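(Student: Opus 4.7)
The plan is to verify the identity by direct computation, reducing to the KLR relation $(\tau_n x_{n+1} - x_n \tau_n)e(\nu) = e(\nu)$ (when $\nu_n = \nu_{n+1}$) from \eqref{eq:KLR}. By linearity it suffices to check the formula on decomposable tensors $z = x \otimes y$ with $x \in R(\beta)\, e(\beta-\alpha_i, i)$ and $y \in e(\beta-\alpha_i, i)\, R^\Lambda(\beta)$. Under the description of $F$ coming from Proposition~\ref{prop:R(n)xR(n)} (item (3) in the bookkeeping after diagram \eqref{F:comm}), we have $F(z) = x\tau_n y$, viewed inside $E_i K_0 = e(\beta,i) R(\beta+\alpha_i) e(\beta,i) \otimes_{R(\beta)} R^\Lambda(\beta)$.

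Next I would push $x_{n+1}$ to the left through the expression. Since $y$ sits in the image of $R(\beta) \hookrightarrow R(\beta+\alpha_i)$ (involving only positions $1, \ldots, n$), it commutes with $x_{n+1}$, giving $F(z) x_{n+1} = x \tau_n x_{n+1} y$. On the $e(\beta,i)$-slice, both positions $n$ and $n+1$ are indexed by $i$, so the KLR relation specialises to
\[
\tau_n x_{n+1}\, e(\beta,i) \;=\; x_n \tau_n\, e(\beta,i) \;+\; e_{n,n+1}\, e(\beta,i),
\]
which (since $x\in R(\beta)e(\beta-\alpha_i,i)$) yields $F(z) x_{n+1} = x x_n \tau_n y + x\, e_{n,n+1}\, y$.

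The first summand is immediately identified with $F\bigl((x x_n) \otimes y\bigr) = F(z(x_n \otimes 1))$, where $z(x_n\otimes 1)$ denotes the right action of $x_n$ on the left tensor factor (this is well-defined because $x_n$ normalises $R(\beta-\alpha_i)$). For the second summand, note that on the $e(\beta,i)$-component we have $e_{n,n+1} = e(\beta-\alpha_i, i, i)$, and this idempotent is precisely the image of $e(\beta-\alpha_i,i) \in R(\beta)$ under the embedding $R(\beta)\hookrightarrow R(\beta+\alpha_i)$. Using $x \cdot e(\beta-\alpha_i, i) = x$, the second summand simplifies to $x \cdot y$, the honest product of $x$ and $y$ in $R^\Lambda(\beta)$ --- i.e.\ the image of $z$ under the counit $\varepsilon\cl F_i E_i R^\Lambda(\beta) \to R^\Lambda(\beta)$.

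Finally I would observe that via Corollary~\ref{cor:eRe}, $E_i K_0$ decomposes (as a bimodule before passage to the quotient) into a summand isomorphic to $F_i E_i R^\Lambda(\beta)$ and a summand coming from $e(\beta,i) R(\beta,1) e(\beta,i)$, and this second summand contains a canonical copy of $R^\Lambda(\beta)$ (the part independent of $x_{n+1}$). The expression $x\,e_{n,n+1}\,y$ has no occurrence of $x_{n+1}$ and therefore lies in this copy of $R^\Lambda(\beta)$, as claimed. The main subtlety is not the computation itself but the careful tracking of the various idempotent decompositions and the way $R^\Lambda(\beta)$ sits inside $E_i K_0$; once these identifications are pinned down, the lemma is an immediate consequence of the single KLR commutation relation above.
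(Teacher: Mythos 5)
Your proof is correct and takes essentially the same route as the paper: both write $z = a \otimes b$, use $F(z) = a\tau_n b$, push $x_{n+1}$ past $b$, and apply the KLR relation $\tau_n x_{n+1} = x_n\tau_n + e_{n,n+1}$ (the paper writes it as $\tau_n x_{n+1} = x_n\tau_n + 1$ with the idempotent implicit), after which the two summands are identified with $F(z(x_n\otimes 1))$ and $E(z)=ab$. Your extra care in tracking $e_{n,n+1}$ and in locating $E(z)$ inside the $\cor[t_i]$-summand of the decomposition from Corollary~\ref{cor:eRe} is sound and only spells out what the paper leaves implicit.
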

\begin{proof}
Write $z = a \otimes b$ where $a \in R(\beta) e(\beta-\alpha_i, i)$,
$b \in e(\beta-\alpha_i, i)R^{\Lambda}(\beta)$. Then
$$F(z) = a\tau_{n} b\quad\text{and}\quad E(z)=ab.$$ It follows that
\begin{equation*}
\begin{aligned}
F(z) x_{n+1} & = a \tau_{n} b  x_{n+1} = a \tau_{n} x_{n+1} b =
a(x_{n} \tau_{n} +1) b  = a x_{n} \tau_{n} b + ab \\
&= F(ax_{n} \otimes b) + E(a \otimes b) =F(z(x_{n} \otimes 1)) +
E(z),
\end{aligned}
\end{equation*}
as desired.
\end{proof}

By Corollary \ref{cor:eRe}, we have the $(R(\beta),
R^{\Lambda}(\beta))$-bimodule decomposition
\begin{equation} \label{eq:decomp}
\begin{aligned}
& e(\beta, i) R(\beta + \alpha_i) e(\beta, i) \otimes_{R(\beta)}
R^{\Lambda} (\beta) \\
& = F\bigl(R(\beta) e(\beta-\alpha_i, i) \otimes_{R(\beta -
\alpha_i)} e(\beta -\alpha_i, i) R^{\Lambda}(\beta)\bigr) \oplus
(R^{\Lambda}(\beta) \otimes \cor[t_{i}]) e(\beta, i),
\end{aligned}
\end{equation}
where $t_{i} = x_{n+1}$. Using the decomposition \eqref{eq:decomp},
we write
\begin{equation} \label{eq:psi}
P\bl e(\beta,i)\tau_{n} \cdots \tau_{1} x_{1}^{k} e(i,\beta)\br
= F(\psi_{k}) + \varphi_{k}
\end{equation}
for uniquely determined elements  $\psi_{k}\in K_0'$
and
$\varphi_{k}\in\cor[t_i]\otimes R^\Lambda(\beta)$.
Note that the definition of $\varphi_{k}$ coincides with the one
given in \eqref{eq:phi}. Indeed, for $k \ge 0$, we have
$$A(t_{i}^{k}) =AB( e(\beta,i)\tau_{n} \cdots \tau_{1} x_{1}^{k}
e(i,\beta))
=CP(e(\beta,i)\tau_{n} \cdots \tau_{1} x_{1}^{k} e(i,\beta))=C(\varphi_{k})=
\varphi_{k}.$$

Now we have
\begin{equation*}
\begin{aligned}
 F(\psi_{k+1}) + \varphi_{k+1} &= P(e(\beta,i)\tau_{n} \cdots \tau_{1}
x_{1}^{k+1}e(i,\beta)) = P(e(\beta,i)\tau_{n} \cdots \tau_{1} x_{1}^{k}e(i,\beta)) x_{n+1}\\
& = (F(\psi_{k}) + \varphi_{k}) x_{n+1} = F(\psi_{k}(x_{n} \otimes
1)) + E(\psi_{k}) + \varphi_{k} t_{i},
\end{aligned}
\end{equation*}
where $t_{i} = x_{n+1}$. Therefore we obtain
\begin{equation} \label{eq:recur}
\psi_{k+1} = \psi_{k}(x_{n} \otimes 1)\quad \text{and}\quad\varphi_{k+1} =
\varphi_{k} t_{i} + E(\psi_{k}).
\end{equation}
In particular, $\varphi_{k}$ is determined uniquely by $\varphi_{k+1}$.

\medskip

 Now we will prove Proposition \ref{prop:A}.
By Lemma~\ref{lem:ABP}, we have
$$g_{n} \cdots g_{1} x_{1}^{k}e(i,\nu)\x \tau_{1} \cdots
\tau_{n}= x_{n+1}^{k}a_i^\Lambda(x_{n+1}) \prod_{\nu_{a}
\neq i} Q_{i, \nu_{a}}(x_{n+1}, x_{a})e(\nu,i)$$ in $e(\beta,
i)R(\beta+\alpha_i)e(\beta, i) \otimes_{R(\beta)}
R^{\Lambda}(\beta)$, which implies
\begin{equation*}
\begin{aligned}
AB(g_{n} \cdots g_{1} x_{1}^k e(i,\nu))&= C \bigl(x_{n+1}^{k}a_i^\Lambda(x_{n+1})
\prod_{\nu_{a} \neq i} Q_{i, \nu_{a}}(x_{n+1}, x_{a}) e(\nu,i)\bigr)
\\&= t_{i}^ka_i^\Lambda(t_{i})\prod_{\nu_a \neq i} Q_{i,\nu_{a}}(t_{i}, x_{a})
e(\nu).
\end{aligned}
\end{equation*}
On the other hand, since $B$ is the map  taking the coefficient of
$\tau_n\cdots\tau_1$, we have
\begin{equation*}
\begin{aligned}
B\bl g_{n} \cdots g_{1} x_{1}^{k} e(i,\nu)\br& = B\Bigl(\prod_{\nu_{a}=i}
(-(x_{n+1}-x_{a})^2) x_{n+1}^k e(\nu,i)\tau_{n} \cdots \tau_{1}\Bigr) \\
& = t_{i}^k \prod_{\nu_{a}=i} (-(t_{i}-x_{a})^2) e(\nu).
\end{aligned}
\end{equation*}
Hence
\eq &&A\bigl(t_{i}^{k} \prod_{\nu_{a}=i} (t_{i} -
x_{a})^2 e(\nu)\bigr) = (-1)^{p} t_{i}^ka_i^\Lambda(t_{i})\prod_{\nu_{a}
\neq i} Q_{i,\nu_{a}}(t_{i}, x_{a}) e(\nu). \label{eq:AA} \eneq
Set \eqn
S&\seteq&\sum_{\nu\in I^\beta}\Bigl(\prod\limits_{\nu_{a}=i} (t_{i} -x_{a})^2
 e(\nu)\Bigr)
\in \cor[t_i]\otimes R^\Lambda(\beta), \\
F&\seteq&\gamma (-1)^{p}a_i^\Lambda(t_{i})\sum_{\nu\in I^\beta}
\Bigl(\prod_{\nu_{a} \neq i} Q_{i, \nu_{a}}(t_{i}, x_{a}) e(\nu)\Bigr)\in
\cor[t_i]\otimes R^\Lambda(\beta).\eneqn
Then they  are monic polynomials in $t_i$ of degree $2p$
and of degree
$\langle h_i,\Lambda - \beta \rangle + 2p = \langle h_i, \lambda \rangle + 2p$,
respectively. Note also that $S$ and $F$
belong to the center of $\cor[t_i]\otimes R^\Lambda(\beta)$.

Then \eqref{eq:AA} reads as
\eq
&&\text{$A(t_i^kS) =\gamma^{-1} t_i^kF$.}
\label{eq:SF}
\eneq
\begin{Lem} \label{lem:A}
We have \eq &&t_{i}^{k}F = (\gamma \varphi_{k}) S +
h_{k}, \label{eq:phik} \eneq where $h_{k}$ is a polynomial in
$t_{i}$ of degree $< 2p$. Hence $\gamma \varphi_{k}$ is the
quotient of $t_{i}^{k}F$ divided by $S$.
\end{Lem}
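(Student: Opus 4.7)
The plan is to deduce the identity from the recurrence in \eqref{eq:recur} and the $R^\Lambda(\beta)$-bilinearity (but not $\cor[t_i]$-linearity) of $A$, together with the centrality result from Remark~\ref{rem:proj}(i). First I would show, by induction on $m\ge0$, that
$$\varphi_{k+m}=\varphi_k\,t_i^m+r_{k,m}$$
for some $r_{k,m}\in\cor[t_i]\otimes R^\Lambda(\beta)$ of degree $<m$ in $t_i$. The base case $m=0$ is trivial, and the inductive step uses the recurrence $\varphi_{k+m+1}=\varphi_{k+m}\,t_i+E(\psi_{k+m})$ from \eqref{eq:recur}, observing that $E(\psi_{k+m})\in R^\Lambda(\beta)$ has degree $0$ in $t_i$.

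Next, I would expand $S$ as a polynomial in $t_i$: since for each $\nu\in I^\beta$ the number of $a$ with $\nu_a=i$ equals $p$, we can write
$$S=t_i^{2p}+\sum_{j=0}^{2p-1}s_j\,t_i^j,\qquad s_j\in R^\Lambda(\beta).$$
The key point is that $S$ is \emph{central} in $\cor[t_i]\otimes R^\Lambda(\beta)$: indeed $S=T^2$ with $T=\sum_\nu\prod_{\nu_a=i}(t_i-x_a)e(\nu)$, and $T$ lies in the center of $R(\beta)\otimes\cor[t_i]$ by Remark~\ref{rem:proj}(i); centrality descends to the quotient. In particular each coefficient $s_j$ commutes with every element of $R^\Lambda(\beta)$, and of course with $t_i$ and with $\varphi_k$.

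Applying the $R^\Lambda(\beta)$-bilinearity of $A$ to the central expansion of $t_i^k S$ gives
$$A(t_i^k S)=\varphi_{k+2p}+\sum_{j=0}^{2p-1}s_j\,\varphi_{k+j},$$
and substituting the formula from the first step together with the commutations just noted yields
$$A(t_i^k S)=\varphi_k\Bigl(t_i^{2p}+\sum_{j<2p}s_j\,t_i^j\Bigr)+\Bigl(r_{k,2p}+\sum_{j<2p}s_j\,r_{k,j}\Bigr)=\varphi_k S+h_k',$$
where $h_k'\seteq r_{k,2p}+\sum_{j<2p}s_j\,r_{k,j}$ has degree $<2p$ in $t_i$. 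Combining with \eqref{eq:SF}, namely $A(t_i^k S)=\gamma^{-1}t_i^k F$, and multiplying through by $\gamma$ gives \eqref{eq:phik} with $h_k=\gamma h_k'$. The only delicate point is verifying that $S$ is central, so that one may move $s_j$ past $\varphi_k$ in the computation; this is where Remark~\ref{rem:proj}(i) is essential, but beyond that the argument is purely formal and the statement that $\gamma\varphi_k$ is the quotient of $t_i^k F$ by $S$ follows because $S$ is monic of degree $2p$ and $\deg h_k<2p$.
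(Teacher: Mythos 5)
Your proof is correct and follows the same underlying idea as the paper: both rest on the recurrence \eqref{eq:recur}, the $R^\Lambda(\beta)$-bilinearity of $A$, the centrality of $S$ in $\cor[t_i]\otimes R^\Lambda(\beta)$, and the identity \eqref{eq:SF}, concluding by monic division. The paper packages the degree bookkeeping as the general statement \eqref{eq:af} that $A(af)-A(a)f$ has $t_i$-degree $<\deg f$ (proved by induction and then applied with $a=t_i^k$, $f=S$), whereas you instantiate the same computation directly by expanding $S$ in powers of $t_i$ and substituting $\varphi_{k+m}=\varphi_k t_i^m+r_{k,m}$; these are cosmetic variants of the same argument.
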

\begin{proof}
By \eqref{eq:recur}, we have $A(t_i^{k+1})-A(t_i^k)t_i\in
R^\Lambda(\beta)$, which implies \eq &&\text{$A(at_i)-A(a)t_i\in
R^\Lambda(\beta)$ for any $a\in R^\Lambda(\beta)[t_i]$.} \eneq By
induction on $m$. we shall show \eq &&\parbox{65ex}{for any
polynomial $f\in R^\Lambda(\beta)[t_i]$ in $t_i$ of degree $m$ and
$a\in R^\Lambda(\beta)[t_i]$, $A(af)-A(a)f$ is of degree $<m$.}
\label{eq:af} \eneq It is already proved for $m=0,1$. Hence it is
enough to show \eqref{eq:af} assuming that $f=t_ig$ and
\eqref{eq:af} is true for $g$. Then
$$A(af)-A(a)f=\bl A(at_ig)-A(at_i)g\br
+\bl A(at_i)-A(a)t_i\br g.
$$
The first term is of degree $<\deg(g)$
and the second term is of degree $\le\deg(g)$.
Hence we obtain \eqref{eq:af}.

\smallskip
Then we have
$$t_i^k\gamma^{-1}F-\varphi_kS=t_i^k\gamma^{-1}F-A(t_i^k)S=A(t_i^kS)-A(t_i^k)S$$
by \eqref{eq:SF}
and it is of order $<2p$ by applying
\eqref{eq:af} for $f=S$.
\end{proof}
As an immediate consequence of Lemma \ref{lem:A}, we see that
$\gamma \varphi_{k}$ is a monic polynomial in $t_{i}$ of degree
$\langle h_{i}, \lambda \rangle +k$.
This completes the proof of
Proposition \ref{prop:A}.

\bigskip

Now we are ready to finish the proof of Theorem \ref{thm:M}. By the
Snake Lemma, we have the following exact sequence of
$R^{\Lambda}(\beta)$-bimodules:
$$0 \longrightarrow \Ker A \longrightarrow q_{i}^{-2}
F_{i}^{\Lambda} E_{i}^{\Lambda} R^{\Lambda}(\beta) \longrightarrow
E_{i}^{\Lambda} F_{i}^{\Lambda} R^{\Lambda}(\beta) \longrightarrow
\Coker A \longrightarrow 0.$$

If $a\seteq\langle h_i, \lambda \rangle \ge 0$, by Proposition
\ref{prop:A}, we have
$$\Ker A =0, \quad \soplus_{k=0}^{a-1} \cor\, t_{i}^k \otimes
R^{\Lambda}(\beta) \overset{\sim} \longrightarrow \Coker A.$$ Hence
the composition
$\soplus_{k=0}^{a-1} \cor\, t_{i}^k \otimes
R^{\Lambda}(\beta) \To E_{i}^{\Lambda} F_{i}^{\Lambda} R^{\Lambda}(\beta)
\To\Coker A$ is an isomorphism, and
we obtain an isomorphism of $R^{\Lambda}(\beta)$-bimodules:
$$q_{i}^{-2} F_{i}^{\Lambda} E_{i}^{\Lambda} R^{\Lambda}(\beta)
\oplus \soplus_{k=0}^{a-1} \cor\, t_i^{k} \otimes R^{\Lambda} (\beta)
\overset{\sim} \longrightarrow E_{i}^{\Lambda} F_{i}^{\Lambda}
R^{\Lambda}(\beta),$$ which proves the statement (a)
in Theorem~\ref{thm:M}.

\medskip
Assume now $a\seteq -\langle h_{i}, \lambda \rangle \ge 0$. By Proposition
\ref{prop:A}, we have
$$\Coker(A) =0, \quad \Ker(A) = q^{2(\alpha_i| \Lambda -
\beta)} \bigoplus_{k=0}^{a-1} \cor\, t_{i}^k \otimes
R^{\Lambda}(\beta).$$
Then $\Ker A\to q_i^{-2}F_{i}^{\Lambda} E_{i}^{\Lambda} R^{\Lambda}(\beta)$ is given
by $t_i^k\longmapsto G(\psi_k)$.
We define a map $$\Psi\cl \Ker(A)
\longrightarrow \bigoplus_{k=0}^{a-1} \cor\, t_{i}^k\otimes
R^{\Lambda}(\beta)$$
as the composition
$$\Ker A
\longrightarrow q_i^{-2} F_{i} E_{i} R^{\Lambda}(\beta) \to [\;E
\circ(x_{n}^k\otimes 1)\;]\bigoplus_{k=0}^{a-1} \cor\,
t_i^{k}\otimes R^{\Lambda} (\beta),$$ where the map $\Ker A \to
q_i^{-2}F_{i} E_{i} R^{\Lambda} (\beta)$ is given by $t_{i}^{k}
\mapsto \psi_{k}$ and $E \circ(x_{n}^k\otimes 1)\cl q_i^{-2} F_{i}
E_{i} R^{\Lambda}(\beta) \to\bigoplus_{k=0}^{a-1} \cor\,
t_i^{k}\otimes R^{\Lambda} (\beta)$ is given by $s\longmapsto
\sum_{k=0}^{a-1} t_i^k\otimes E(s(x_n^k\otimes 1))$. Then we have a
commutative diagram
$$\xymatrix{
\Ker(A)\ar[r]\ar[d]&q_i^{-2} F_{i} E_{i} R^{\Lambda}(\beta)
\ar[d]^{E \circ(x_{n}^k\otimes 1)}\ar[dl]_-{G}\\
q_i^{-2}F^\Lambda_{i} E^\Lambda_{i} R^{\Lambda}(\beta)\ar[r]
&\soplus_{k=0}^{a-1} \cor\, t_i^{k}\otimes R^{\Lambda} (\beta).
}$$
Then by \eqref{eq:recur}, we have
$$\Psi(t_{i}^{j}) = \sum_{k=0}^{a-1} E(\psi_{j}(x_{n}^k \otimes 1))t_i^k =
\sum_{k=0}^{a-1} E(\psi_{j+k}) t_{i}^k.$$ Since $\gamma \varphi_{k}$
is a monic polynomial of degree $k-a$ in $t_{i}$, we deduce
$$ \varphi_{k} = \begin{cases} 0 \ \ & \text{if} \ k<a, \\
\gamma^{-1} \ \ & \text{if} \ k=a. \end{cases}$$
{}From the relation
$$\gamma^{-1} = \varphi_{a} = E(\psi_{a-1}) + \varphi_{a-1} t_{i},$$
we obtain $$E(\psi_{a-1})=\gamma^{-1}.$$ For $k \le a-1$, we have
$$0 = \varphi_{k} = \varphi_{k-1} t_{i} + E(\psi_{k-1}),$$
from which we obtain
$$E(\psi_{k})=0 \ \ \text{for all} \ k < a-1.$$
Thus we derive a system of equations
$$\Psi(t_{i}^{j}) =\gamma^{-1} t_{i}^{a-1-j} + \sum_{a-1-j<k\le a-1} E(\psi_{j+k})
t_{i}^{k}.$$
Hence $\Psi$ is an $R^\Lambda(\beta)$-linear endomorphism of
$\soplus_{k=0}^{a-1} \cor\, t_{i}^k \otimes R^{\Lambda}(\beta)$
which is in a triangular form. Therefore, $\Psi$ is an
isomorphism and we conclude
$$F_{i}^{\Lambda} E_{i}^{\Lambda} R^{\Lambda}(\beta) \overset{\sim}
\longrightarrow E_{i}^{\Lambda} F_{i}^{\Lambda} R^{\Lambda}(\beta)
\oplus \bigoplus_{k=0}^{a-1} \cor\, t_{i}^{k} \otimes
R^{\Lambda}(\beta) $$ as $R^{\Lambda}(\beta)$-bimodules.
This completes the proof of Theorem~\ref{thm:M}. \qed

\vskip 3em


\section{Categorification of $V(\Lambda)$}

In this section, we shall show that cyclotomic \KLRs\ categorify the
irreducible highest weight module $V(\Lambda)$. In \cite{R08}, one
can find a more systematic and detailed treatment of the
categorification.

Hereafter, we assume that the degree-zero part $\cor_0$ of the base
ring $\cor$ is a commutative field. For $\beta\in Q^+$, let us
denote by $\Proj(R^\Lambda(\beta))$ the category of finitely
generated projective graded $R^\Lambda(\beta)$-modules, and by
$\Rep(R^\Lambda(\beta))$ the category of graded
$R^\Lambda(\beta)$-modules that are finite-dimensional over
$\cor_0$. Let us denote by $[\Proj(R^\Lambda(\beta))]$ and
$[\Rep(R^\Lambda(\beta))]$ their Grothendieck groups. Then they are
$\Z[q,q^{-1}]$-modules, where the action of $q$ is given by the
grade shift functor $q$ (see \eqref{eq:shift}). Let us set
$$[\Proj(R^\Lambda)]\seteq\soplus_{\beta\in Q^+}[\Proj(R^\Lambda(\beta))]
\quad\text{and}\quad
[\Rep(R^\Lambda)]\seteq\soplus_{\beta\in Q^+}[\Rep(R^\Lambda(\beta))].$$

By Theorem ~\ref{th:proj} and its corollary
(Corollary~\ref{cor:exact}),
the arrows of the following diagrams are exact functors: \eqn &&
\xymatrix@C=13ex{\Proj(R^\Lambda(\beta))\ar@<.8ex>[r]^-{F_{i}^{\Lambda}}
&\Proj(R^\Lambda(\beta+\alpha_i)) \ar@<.8ex>[l]^-{q_i^{1-\lan
h_i,\Lambda-\beta\ran} E_{i}^{\Lambda}}
},\\
&&\xymatrix@C=13ex{\Rep(R^\Lambda(\beta))
\ar@<.8ex>[r]^-{q_i^{1-\lan h_i,\Lambda-\beta\ran} F_{i}^{\Lambda}}
&\Rep(R^\lambda(\beta+\alpha_i))\ar@<.8ex>[l]^-{E_{i}^{\Lambda}}. }
\eneqn Hence they induce endomorphisms $\F$ and $\E$ on
$[\Proj(R^\Lambda)]$ and $[\Rep(R^\Lambda)]$. The following lemma
immediately follows from Theorem~\ref{thm:M}.

\begin{Lem} \label{lem:mixed}
For all $i,j \in I$, we have
$$[\E, \F[j]] = \delta_{ij} \dfrac{K_{i} -
K_{i}^{-1}}{q_i - q_i^{-1}} $$ on $[\Proj(R^\Lambda)]$ and
$[\Rep(R^\Lambda)]$. Here, $K_i$ is given by
$$K_{i}\vert_{[\Proj(R^\Lambda(\beta))]}=q_i^{\lan
h_i,\Lambda-\beta\ran}, \quad
K_{i}\vert_{[\Rep(R^\Lambda(\beta))]}=q_i^{\lan
h_i,\Lambda-\beta\ran}.$$
\end{Lem}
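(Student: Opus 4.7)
The plan is to deduce the commutation directly from the functor-level isomorphisms of Theorem~\ref{thm:A} and Theorem~\ref{thm:M} by passing to Grothendieck classes and carefully tracking the grade shifts built into the definitions of $\E$ and $\F[j]$. Theorem~\ref{th:proj} and Corollary~\ref{cor:exact} ensure that $E_i^\Lambda$ and $F_i^\Lambda$ are exact and preserve both $\Proj$ and $\Rep$, so they induce well-defined $\Z[q,q^{-1}]$-linear endomorphisms on the two Grothendieck groups, and it suffices to check the identity on each weight component.

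For the off-diagonal case $i\neq j$, I appeal to Theorem~\ref{thm:A}, which yields the natural isomorphism $q^{-(\alpha_i|\alpha_j)}F_j^\Lambda E_i^\Lambda\isoto E_i^\Lambda F_j^\Lambda$. A short computation on a weight-$\lambda$ component shows that the shift $q_i^{1-\langle h_i,\Lambda-\beta\rangle}$ in the definition of $\E$ produces a factor $q_i^{-1-\lambda_i+a_{ij}}$ for $\E\F[j]$ and $q_i^{-1-\lambda_i}$ for $\F[j]\E$, where $\lambda_i=\langle h_i,\lambda\rangle$ and $a_{ij}=\langle h_i,\alpha_j\rangle$. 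Combined with the relation $[E_i^\Lambda F_j^\Lambda]=q_i^{-a_{ij}}[F_j^\Lambda E_i^\Lambda]$ coming from Theorem~\ref{thm:A} (using $(\alpha_i|\alpha_j)=d_ia_{ij}$), the two contributions cancel, giving $[\E,\F[j]]=0$ on both $[\Proj(R^\Lambda)]$ and $[\Rep(R^\Lambda)]$.

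For the diagonal case $i=j$, I fix $\beta\in Q^+$, set $\lambda=\Lambda-\beta$ and $\lambda_i=\langle h_i,\lambda\rangle$, and unwind the definitions on the $\lambda$-weight component to obtain
$$[\E,\F]=q_i^{1-\lambda_i}[E_i^\Lambda F_i^\Lambda]-q_i^{-1-\lambda_i}[F_i^\Lambda E_i^\Lambda].$$
This same formula holds on both $[\Proj(R^\Lambda)]$ and $[\Rep(R^\Lambda)]$: although the convention places the $q_i^{1-\lambda_i}$ twist on $\E$ in one case and on $\F$ in the other, the two twists combine inside the commutator in an identical way. When $\lambda_i\ge 0$, Theorem~\ref{thm:M}(a) yields $[E_i^\Lambda F_i^\Lambda]=q_i^{-2}[F_i^\Lambda E_i^\Lambda]+\sum_{k=0}^{\lambda_i-1}q_i^{2k}$; the geometric sum equals $q_i^{\lambda_i-1}[\lambda_i]_i$, and after substitution the $[F_i^\Lambda E_i^\Lambda]$ contribution cancels, leaving $[\E,\F]=[\lambda_i]_i=(K_i-K_i^{-1})/(q_i-q_i^{-1})$. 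When $\lambda_i\le 0$, Theorem~\ref{thm:M}(b) provides the mirror identity; the analogous cancellation produces $-[-\lambda_i]_i$, which equals $[\lambda_i]_i$ by the antisymmetry $[-n]_i=-[n]_i$ of the quantum integer.

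The only genuine obstacle is the bookkeeping of grade shifts. I must verify carefully that $\E$ and $\F$, rescaled using the weight of the correct domain/codomain component, produce exponents $q_i^{\pm 1-\lambda_i}$ that combine with the $q_i^{-2}$ appearing in Theorem~\ref{thm:M} precisely so that the $[F_i^\Lambda E_i^\Lambda]$ term drops out of the commutator; once this accounting is in place, the lemma is an immediate consequence of the categorical isomorphisms already established.
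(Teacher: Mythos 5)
Your proposal is correct and follows essentially the same route as the paper, which simply asserts Lemma~\ref{lem:mixed} as an immediate consequence of Theorem~\ref{thm:A} and Theorem~\ref{thm:M} without writing out the grade-shift bookkeeping. Your explicit verification that the weight-dependent twists $q_i^{\pm1-\lambda_i}$ combine with the $q_i^{-2}$ from Theorem~\ref{thm:M} to cancel the $[F_i^\Lambda E_i^\Lambda]$ term, and that symmetry of $(\alpha_i\mid\alpha_j)$ together with Theorem~\ref{thm:A} handles the off-diagonal case on both $[\Proj(R^\Lambda)]$ and $[\Rep(R^\Lambda)]$, is exactly the calculation the paper leaves implicit.
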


It is obvious that the action of $\E$ on
 $[\Proj(R^\Lambda)]$ and $[\Rep(R^\Lambda)]$ are locally nilpotent.
Lemma~\ref{lem:integrable} implies that
the action of $\F$ on
 $[\Proj(R^\Lambda)]$ and $[\Rep(R^\Lambda)]$ are also locally nilpotent.
Therefore, by \cite[Proposition B.1]{KMPY96}, the Grothendieck groups
$$[\Proj(R^{\Lambda})]_{\Q(q)} = \Q(q) \otimes_{\Z[q,q^{-1}]}
[\Proj(R^{\Lambda})]$$ and
$$[\Rep(R^{\Lambda})]_{\Q(q)} = \Q(q) \otimes_{\Z[q,q^{-1}]}
[\Rep(R^{\Lambda})]
$$
become integrable $U_q(\g)$-modules.

\nc{\inn}{\mathbin{\rule{.4pt}{7pt}\kern-4pt\cup}}
For a left $R^\Lambda$-module $N$, let us denote by
$N^\psi$ the right $R^\Lambda$-module
obtained from $N$ by the anti-involution $\psi$ of $R^\Lambda$
that fixes the generators $x_k$, $\tau_l$ and $e(\nu)$.
By the pairing
\eqn
[\Proj(R^\Lambda)]\times [\Rep(R^{\Lambda})]&\To&\Z[q,q^{-1}],\\
\hs{5ex}\inn\hs{10ex}&&\hs{4ex}\inn\\
\hs{5ex}(P,M)\hs{7ex}&\longmapsto&
\sum_{n\in\Z} q^n\dim_{\cor_0}(P^\psi\otimes_{R^\Lambda}M)_n
\eneqn
the free $\Z[q,q^{-1}]$-modules
$[\Proj(R^\Lambda)]$ and $[\Rep(R^{\Lambda})]$ are dual to each other.
Moreover, $\E\vert_{[\Proj(R^\Lambda)]}$ and
$\F\vert_{[\Proj(R^\Lambda)]}$ are adjoint to
$\F\vert_{[\Rep(R^{\Lambda})]}$ and $\E\vert_{[\Rep(R^{\Lambda})]}$,
respectively.

We denote by $\Rep(R(\beta))$ the category of $R(\beta)$-modules
that are finite-dimensional over $\cor_0$. We define
$[\Rep(R)]=\soplus_{\beta\in Q^+}[\Rep(R(\beta))]$,
$[\Rep(R)]_{\Q(q)}=\Q(q)\otimes_{\Z[q,q^{-1}]}[\Rep(R)]$,
$\Proj(R(\beta))$, $[\Proj(R)]$ and $[\Proj(R)]_{\Q(q)}$, similarly.
Then $[\Proj(R)]$ and $[\Rep(R)]$ are also dual to each other. The
fully faithful exact functor
$\Rep(R^\Lambda(\beta))\to\Rep(R(\beta))$ induces a
$\Z[q,q^{-1}]$-linear homomorphism $[\Rep(R^\Lambda)]\to
{[\Rep(R)]}$. It is well-known that $[\Rep(R)]$ (resp.\
$[\Rep(R^\Lambda)]$) has a basis $[S]$ where $S$ ranges over the set
of the isomorphism classes of irreducible $R$-modules (resp.\
$R^\Lambda$-modules). Hence  $[\Rep(R^\Lambda)]\to {[\Rep(R)]}$ is
injective and its cokernel is a free $\Z[q,q^{-1}]$-module. By the
duality, the homomorphism $[\Proj(R)]\to{[\Proj(R^\Lambda)]}$
(induced by the functor $R^\Lambda(\beta)\otimes_{R(\beta)}\scbul$)
is surjective. Note that $[\Proj(R)]\to{[\Proj(R^\Lambda)]}$ is
$U^-_\A(\g)$-linear. In \cite{KL09}, Khovanov and Lauda showed that
$[\Proj(R)]$ is isomorphic to $U^-_\A(\g)$ as bialgebras.
Hence its quotient $[\Proj(R^\Lambda)]$ is generated by the trivial
representation $\mathbf{1}_{\Lambda}$ of $R^{\Lambda}(0)$. Therefore
 $[\Proj(R^\Lambda)]_{\Q(q)}$ is an integrable highest weight $U_q(\g)$-module
and it is isomorphic to $V(\Lambda)$ by Proposition \ref{prop:hw}
(a). Hence
 $[\Proj(R^\Lambda)]$ is isomorphic to $V_\A(\Lambda)$.
By duality, we obtain $[\Rep(R^{\Lambda})] \simeq V_{\A}(\Lambda)^{\vee}$.

\medskip
To summarize, we obtain the categorification of the irreducible
highest weight module $V(\Lambda)$.

\begin{Thm} \label{thm:N}
There exist isomorphisms of $U_\A(\g)$-modules
$$ \text{$[\Proj(R^{\Lambda})] \simeq V_{\A}(\Lambda)$
 and $[\Rep(R^{\Lambda})] \simeq V_{\A}(\Lambda)^{\vee}$.}$$
\end{Thm}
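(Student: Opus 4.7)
The plan is to assemble the categorical machinery already in hand into the statement of Theorem~\ref{thm:N}. First, combine Theorem~\ref{thm:M} with Theorem~\ref{thm:A} into the Chevalley-type commutation relation of Lemma~\ref{lem:mixed}. Together with the observation that $\E$ is locally nilpotent for grading reasons and that $\F$ is locally nilpotent by Lemma~\ref{lem:integrable}(b), this puts us in position to invoke \cite[Proposition B.1]{KMPY96}, which yields that $[\Proj(R^{\Lambda})]_{\Q(q)}$ and $[\Rep(R^{\Lambda})]_{\Q(q)}$ carry integrable $U_q(\g)$-module structures with $q^h$ acting through the weights $\Lambda-\beta$.

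Second, I would identify the highest weight vector. The quotient map $R(\beta)\twoheadrightarrow R^{\Lambda}(\beta)$ induces a $U_{\A}^{-}(\g)$-linear surjection $[\Proj(R)]\twoheadrightarrow[\Proj(R^{\Lambda})]$, and since $[\Proj(R)]\simeq U_{\A}^{-}(\g)$ by Khovanov--Lauda, it follows that $[\Proj(R^{\Lambda})]$ is cyclic, generated by the class of the trivial module $\mathbf{1}_\Lambda$ over $R^{\Lambda}(0)=\cor$. This class has weight $\Lambda$, and $\E[\mathbf{1}_\Lambda]=0$ vacuously (there is no $R^{\Lambda}(-\alpha_i)$). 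So $[\Proj(R^{\Lambda})]_{\Q(q)}$ is an integrable highest weight $U_q(\g)$-module of highest weight $\Lambda$, and Proposition~\ref{prop:hw}(a) identifies it with $V(\Lambda)$.

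Third, descend to $\A$-forms. The $U_{\A}(\g)$-linear homomorphism $V_{\A}(\Lambda)\to[\Proj(R^{\Lambda})]$ sending $v_\Lambda\mapsto[\mathbf{1}_\Lambda]$ is well defined because $[\Proj(R^{\Lambda})]$ is stable under $\E$, $\F$, $q^h$. It becomes an isomorphism after $\otimes_{\A}\Q(q)$, since the target becomes $V(\Lambda)$ and the map is surjective (image contains a generator) and any nonzero quotient of the irreducible $V(\Lambda)$ is $V(\Lambda)$ itself. Surjectivity of the integral map follows from the cyclicity proved in the previous step, while injectivity follows from $V_{\A}(\Lambda)\subset V(\Lambda)$ being $\A$-torsion-free, so the kernel (which is $\A$-torsion) vanishes. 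This proves $[\Proj(R^{\Lambda})]\simeq V_{\A}(\Lambda)$.

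Finally, the perfect pairing $[\Proj(R^{\Lambda})]\times[\Rep(R^{\Lambda})]\to\Z[q,q^{-1}]$ recalled just before the theorem, under which $(\E,\F[j])$ act as adjoints of $(\F[j],\E)$, identifies $[\Rep(R^{\Lambda})]_\lambda$ with $\Hom_{\A}([\Proj(R^{\Lambda})]_\lambda,\A)$ weight by weight; combined with $[\Proj(R^{\Lambda})]\simeq V_{\A}(\Lambda)$ and the displayed identity $V_{\A}(\Lambda)^{\vee}_\lambda\simeq\Hom_{\A}(V_{\A}(\Lambda)_\lambda,\A)$, this yields $[\Rep(R^{\Lambda})]\simeq V_{\A}(\Lambda)^{\vee}$. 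The only delicate step in this whole plan is the passage to $\A$-forms, but it is forced by the combination of cyclicity (from the surjection out of $[\Proj(R)]$) with $\A$-torsion-freeness of $V_{\A}(\Lambda)$; all the serious work has already been absorbed into Theorem~\ref{th:proj}, Theorem~\ref{th:main} and Theorem~\ref{thm:M}.
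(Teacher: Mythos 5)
Your proposal follows the paper's proof closely: Lemma~\ref{lem:mixed} plus local nilpotence and \cite[Prop.~B.1]{KMPY96} to get integrability, cyclicity over $U_{\A}^-(\g)$ via the induced map from $[\Proj(R)]\simeq U_{\A}^{-}(\g)$, identification of $[\Proj(R^\Lambda)]_{\Q(q)}$ with $V(\Lambda)$ by Proposition~\ref{prop:hw}(a), descent to $\A$ using torsion-freeness, and the perfect pairing for the $[\Rep]$ statement. This is exactly the paper's route.

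The one genuine gap is the assertion, made without argument, that the quotient $R(\beta)\twoheadrightarrow R^{\Lambda}(\beta)$ ``induces a surjection'' $[\Proj(R)]\twoheadrightarrow[\Proj(R^{\Lambda})]$. The functor $R^\Lambda(\beta)\otimes_{R(\beta)}\scbul$ does give a $U_\A^-(\g)$-linear map on split Grothendieck groups of projectives, but surjectivity of a map on $[\Proj]$ induced by an algebra quotient is not formal: one must know that every indecomposable projective $R^\Lambda$-module is hit. The paper proves surjectivity indirectly: it first observes that $[\Rep(R^\Lambda)]\to[\Rep(R)]$ is injective with \emph{free} cokernel (because both sides have bases indexed by irreducibles, and every irreducible $R^\Lambda$-module stays irreducible over $R$), and then dualizes via the perfect pairings. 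Alternatively, you could argue directly that the projective cover (as an $R$-module) of a simple $R^\Lambda(\beta)$-module is carried by $R^\Lambda(\beta)\otimes_{R(\beta)}\scbul$ to its $R^\Lambda$-projective cover; either way, the step needs to be justified, not merely asserted. Once this is supplied, the rest of your argument (cyclicity, highest weight identification, the $\A$-form comparison via torsion-freeness, and the dual statement) is sound and matches the paper.
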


\vskip 3em


\bibliographystyle{amsplain}


\end{document}